\title[Modules of covariants]{Modules of covariants in modular invariant theory}
\author{Abraham Broer and Jianjun Chuai}
\address{ D\'{e}partement de math\'{e}matiques et de statistique
 \hfil\break\indent Universit\'{e} de Montr\'{e}al, Montr\'{e}al QC
H3C 3J7, Canada}
\email{broera@DMS.UMontreal.CA}
\address{ Department of Mathematics and Statistics
 \hfil\break\indent Memorial University of Newfoundland, St. John's NL
A1C 5S7, Canada}
\email{chuai@mun.ca}
\thanks{This research was supported by the Natural Sciences and Engineering
Research Council of Canada}
\date\today
\thanks{}
\theoremstyle{plain}
\newtheorem{lemma}{Lemma}
\newtheorem{proposition}{Proposition}
\newtheorem{theorem}{Theorem}
\newtheorem{corollary}{Corollary}
\theoremstyle{remark}
\newtheorem*{remark}{Remark}
\newtheorem{example}{Example} 
\def\F{{\mathbb F}}                            
\def\Z{{\mathbb Z}}                            
\def\codim{\mathop{\operatorname{codim}}\nolimits} 
\def\deg{\mathop{\operatorname{deg}}\nolimits} 
\def\det{{\mathop{\operatorname{det}}\nolimits}}
\def\dim{\mathop{\operatorname{dim}}\nolimits} 
\def\Diff{{\mathfrak D}} 
\def\Ind{\mathop{\operatorname{Ind}}\nolimits} 
\def\Jac{\mathop{\operatorname{Jac}}\nolimits}
\def\length{\mathop{\operatorname{\ell}}\nolimits} 
\def\normal{\vartriangleleft} 
\def\Ker{\mathop{\operatorname{Ker}}\nolimits} 
\def\GL{\mathop{\operatorname{GL}}\nolimits} 
\def\GF{{\mathcal{H}}} 
\def\End{\mathop{\operatorname{End}}\nolimits} 
\def\Tor{\mathop{\operatorname{Tor}}\nolimits} 
\def\Ext{\mathop{\operatorname{Ext}}\nolimits} 
\def\Hom{\mathop{\operatorname{Hom}}\nolimits} 
\def\Tr{\mathop{\operatorname{Tr}}\nolimits}   
\def\gotm{{\mathfrak m}} 
\def\gotM{{\mathfrak M}} 
\def\gotp{{\mathfrak p}} 
\def\gotP{{\mathfrak P}} 
\begin{document} 

\begin{abstract}
Let the finite group $G$ act linearly on the vector space $V$ over the field $k$ of
arbitrary characteristic, and let $H<G$ be a subgroup. The extension of invariant rings
$k[V]^G\subset k[V]^H$ is studied using modules of covariants. 

An example of our results is the following. Let $W$ be the subgroup of $G$ generated by the 
reflections in $G$.
A classical theorem due to Serre says that if $k[V]$ is a free $k[V]^G$-module then
$G=W$. We generalize this result as follows. If $k[V]^H$ is a free $k[V]^G$-module,      then
$G$ is generated by $H$ and $W$. Furthermore, the invariant ring $k[V]^{H\cap W}$ is free over $k[V]^W$
and is generated as an algebra by $H$-invariants and $W$-invariants.
\end{abstract}

\maketitle

\section*{Introduction} 
Let $V$ be an $n$-dimensional vector space over a field $k$ of arbitrary characteristic
and let $G$ be a finite group of linear automorphisms of $V$. Write $k[V]$ for the coordinate algebra of $V$; it is in a natural way a graded algebra.
 Then $G$ also acts as a group of graded $k$-algebra automorphisms of $k[V]$, and the collection of $G$-invariant
polynomials forms a graded $k$-algebra $k[V]^G$, the {\em algebra of invariants}. Let $G$ also act linearly on another finite dimensional vector space $M$ over the same
base field. Then we get an induced $G$-action on the free $k[V]$-module $k[V]\otimes_k M$.  The $G$-invariant elements of $k[V]\otimes_k M$ form a $k[V]^G$-submodule 
$$k[V]^G(M):=(k[V]\otimes_k M)^G,$$ 
called the {\em module of covariants of type $M$}.
The algebra of invariants  and the modules of covariants 
are classical objects of study. We shall say the situation is {\em modular} if the characteristic of the field 
$k$ is positive and divides the order of the group, and {\em non-modular} if it is not modular.

We shall say that
$\sigma\in G$ is a {\em reflection} if it fixes a linear hyperplane  of $V$ point-wise.
Put $W$ for the (normal) subgroup generated by the reflections in $G$.
This subgroup plays an important role.
The {\em Hilbert series} $\GF(k[V]^G(M);t)$ is defined as 
$$\GF(k[V]^G(M);t):=\sum_{i\geq 0} \dim_k(k[V]_i\otimes_k M)^G\cdot t^i.$$
Define $\deg(k[V]^G(M))$ and $\psi(k[V]^G(M))$ by the expansion of the Hilbert series at $t=1$:
$$\GF(k[V]^G(M);t)=\frac{\deg(k[V]^G(M))}{(1-t)^n}+
\frac{\psi(k[V]^G(M))}{(1-t)^{n-1}}+O(\frac{1}{(1-t)^{n-2}}).$$

In the non-modular situation, a lot more is known than in the modular 
situation.
For example, the following results are true in the non-modular situation but not so in the
modular situation.
$k[V]^G$ is a Cohen-Macaulay algebra and
$k[V]^G(M)$ is a Cohen-Macaulay module over $k[V]^G$, by Hochster-Eagon~\cite{HE}.  
The invariant algebra $k[V]^G$ is a polynomial algebra {\em if and only if} $G=W$, by Chevalley-Shephard-Todd~\cite[Theorem 7.2.1]{Benson1993}, and all modules of covariants are free when $G=W$.
There is a Molien formula for the Hilbert series $\GF(k[V]^G(M);t)$
using only the (Brauer) characters of $V$ and $M$, see e.g.~\cite{DerksenKemper}.

There are other results that are generally true, but whose proofs are much harder in the
modular case. For example, the following formula
\begin{equation}\label{eqnBC}
|G|\psi(k[V]^G)=\sum_{U\subset V} |G_U|\psi(k[V]^{G_U}),
\end{equation}
where the sum is over all linear hyperplanes $U\subset V$ and where $G_U$ is the
subgroup of $G$ consisting of all elements that fix $U$ point-wise. This follows easily
from Molien's formula in the non-modular situation, but the proof by 
Benson and Crawley-Boevey~\cite{BensonCB} requires a lot more work
in the modular situation.

If $k[V]$ is free as $k[V]^G$-module, then $G=W$. 
This result, first proved over the complex numbers by 
Shephard-Todd~\cite{ShephardTodd1954} and then in general by
Serre~\cite[V \S 5, ex.~8]{BourbakiLie}, \cite{Benson1993}, requires again a different approach in the modular case (see also~\cite{Br6}).

Stanley~\cite{Stanley} studied modules of covariants over the field of complex numbers when $M$ is one-dimensional. For example, he shows that then $k[V]^W(M)$ is  free of rank one
over $k[V]^W$, and he explicitly describes a generator $F_M\otimes v\in (k[V]\otimes M)^W$. Here $F_M\in k[V]$ is a certain product of linear forms and $v\in M$ is a basis element of the one-dimensional module $M$. 
Then he gives the criterion that $k[V]^G(M)$ is 
free over $k[V]^G$ {\em if and only if} $k[V]^W(M)$ and $k[V]^G(M)$ share generators {\em if and only
if} $k[V]^G(M)$ contains a non-zero homogeneous element of degree equal to the degree of $F_M$. Again, over
a general  field  these results remain true, but require additional work, see 
Nakajima~\cite{Na}.

In this article, we extend in some sense the above mentioned results of Benson and Crawley-Boevey, Serre, Stanley and Nakajima, to general modules of covariants over arbitrary fields. 

First, the following formula is a direct  analogue of (\ref{eqnBC}) for modules of covariants
\begin{equation}\label{eqnBCbis}
|G|\psi(k[V]^G(M))=\sum_{U\subset V} |G_U|\psi(k[V]^{G_U}(M)),
\end{equation}
where the sum is over all linear hyperplanes $U\subset V$, see Theorem~\ref{BCB} and Remark~\ref{psi}. Define $r_{k[V]^G}(k[V]^G(M))$, called the {\em rank}, and $s_{k[V]^G}(k[V]^G(M))$,
called the {\em $s$-invariant},  by the Taylor expansion at $t=1$ of the quotient of
Hilbert series
$$\frac{\GF(k[V]^G(M);t)}{\GF(k[V]^G;t)}=r_{k[V]^G}(k[V]^G(M))+s_{k[V]^G}(k[V]^G(M))(t-1)+
O((t-1)^2).$$
The rank is just the dimension of $M$.  If $k[V]^G(M)$ is a free $k[V]^G$-module 
with a homogeneous  basis of degrees $e_1,\ldots,e_m$, then the $s$-invariant coincides 
with the sum of these degrees. Formula (\ref{eqnBCbis}) is equivalent to the formula
\begin{equation}\label{sequation}
s_{k[V]^G}(k[V]^G(M))=\sum_{U\subset V,\ \codim_VU=1} s_{k[V]^{G_U}}(k[V]^{G_U}(M)).
\end{equation}
 We show in Theorem~\ref{BCB} that the $s$-invariant only depends on the reflections 
 in $G$, i.e.,
$$s_{k[V]^G}(k[V]^G(M))=s_{k[V]^W}(k[V]^W(M)).$$

Since $k[V]^{G_U}(M)$ is always a free graded $k[V]^{G_U}$-module, by Proposition~\ref{codimensionone}(i), 
it follows that the $s$-invariant of a module of covariants is always a non-negative integer.  Furthermore, the $s$-invariant is $0$ {\em if and only if} the subgroup $W$ acts
trivially on $M$, by Proposition~\ref{s is integer}. 

Next, we describe a generalization of a part of Stanley--Nakajima's results.
We prove in Theorem~\ref{tensor} that if  $k[V]^G(M)$ is free over $k[V]^G$, then $k[V]^W(M)$ is also free over $k[V]^W$ and both modules of covariants share generators. More precisely, 
 multiplication induces an isomorphism of $k[V]^W$-modules
 $$k[V]^W\otimes_{k[V]^G}k[V]^G(M)\simeq k[V]^W(M).$$
Conversely,  if $k[V]^W(M)$ is free over $k[V]^W$ and the modules of covariants $k[V]^W(M)$
and
$k[V]^G(M)$ share generators, then $k[V]^G(M)$ is free over $k[V]^G$, see Corollary~\ref{converse}.

We can use results on modules of covariants to study extensions of rings of invariants $k[V]^G\subset k[V]^H$, where $H<G$. Let $k(G/H)$ be the permutation module on $G/H$, the collection of the left cosets of $H$ in $G$. Then
the ring of invariants $k[V]^H$ is as a $k[V]^G$-module isomorphic to
the module of covariants of type $k(G/H)$, see Lemma~\ref{alsocovariants}.

We obtain a generalization of Serre's theorem, see Theorem~\ref{Serre}.
Suppose that $k[V]^H$ is free as a graded $k[V]^G$-module. Then  
$G$ is generated by $H$ together with the reflections in $G$,  i.e., $G=HW$. Furthermore, multiplication
induces an isomorphism of algebras
$$k[V]^H\otimes_{k[V]^G}k[V]^W\simeq k[V]^{H\cap W}.$$
In particular, in that case $k[V]^{W}\subset k[V]^{H\cap W}$ is also a free extension and
the $H$-invariants together with the $W$-invariants generate the ring of $H\cap W$-invariants, i.e.
$$k[V]^{H\cap W}=k[V]^Hk[V]^W.$$ 
Conversely, if $k[V]^{W}\subset k[V]^{H\cap W}$ is  a free extension and
the $H$-invariants together with the $W$-invariants generate the ring of $H\cap W$-invariants, we have that 
$k[V]^H$ is free as a graded $k[V]^G$-module, by Proposition~\ref{Serreconverse}.

For a further extension of Stanley--Nakajima's results,
we first define an analogue of Stanley's polynomial $F_M$  for a general $M$,  introduced by Gutkin~\cite{Gutkin} over the complex numbers.
It is an explicit product of linear forms, defined as follows.
For any linear hyperplane $U$ of $V$, fix a linear form $x_U$ having $U$ as zero-set. As before, 
we denote the point stabilizer of $U$ by $G_U$.
Put 
$$s_U(M):=s_{k[V]^{G_U}}(k[V]^{G_U}(M)).$$ 
Then we define
$$F_M:=\prod_{U\subset V} x_U^{s_U(M)},$$
where the product is over all the linear hyperplanes $U$ of $V$.  It is well-defined up to a non-zero scalar, since $s_U(M)\neq 0$ for only finitely many $U$'s.  From equation (\ref{sequation}) it follows that the degree of $F_M$ is exactly the $s$-invariant
of $k[V]^G(M)$.
Next, we define certain determinants.
Fix a basis $v_1,\ldots,v_m$ for $M$, where $m=\dim_k M$. 
If $\omega_j:=\sum_i f_{ij}\otimes v_i$, $1\leq j\leq m$, is an $m$-tuple of elements
in $k[V]^G(M)$, write
$$\Jac_M(\omega_1,\ldots,\omega_m)\in k[V]$$ for the determinant of the square $m\times m$ matrix
$(f_{ij})_{1\leq i,j\leq m}$.   We call it the {\em $M$-Jacobian determinant} of the $m$-tuple of covariants.

Now we are ready to state another generalization of Stanley-Nakajima's  freeness criterion,
called the {\em Jacobian criterion of freeness},
see Theorem~\ref{Jacobian}. 
Let $\omega_1,\ldots,\omega_m\in k[V]^G(M)$ be an $m$-tuple of homogeneous covariants of type $M$. They 
form a  free generating set of $k[V]^G(M)$ if and only
there is a non-zero constant $c\in k^\times$ such that
$\Jac_M(\omega_1,\ldots,\omega_m)=cF_M$ {\em if and only if} the
$M$-Jacobian determinant of $(\omega_1,\ldots,\omega_m)$ is non-zero and
the sum of the degrees of the $\omega_i$'s equals the $s$-invariant of $k[V]^G(M)$.

In this introduction, we described our results on modules of covariants of linear actions on a vector space.
But actions on polynomial rings respecting non-standard gradings are also interesting,
and likewise the action of $G/N$ on the invariant ring $k[V]^N$ of a normal subgroup $N\normal G$. That is
one of our motivations to work with actions of a finite group on an integrally closed, connected graded algebra without zero-divisors.  Many of the results mentioned in this introduction remain true in this more general
context. These assumptions imply that all modules of covariants are reflexive. This
is a property for modules analogous to the property of normality for an integral domain. 
A consequence, that we use frequently,  is that a morphism between two reflexive modules is an isomorphism if
and only if it is an isomorphism at all prime ideals of height one.

\section{Preliminaries and the $s$-invariant}
\subsection{Notation}\label{notation}
Fix a base field $k$ of arbitrary characteristic.
In this article,  all graded algebras $A=\bigoplus_{i\geq o} A_i$ will be assumed to be finitely generated graded algebras over the
field $k$ without zero-divisors,  and the only elements of degree
zero are the scalars, i.e., $A_0=k$. The elements without constant term form the unique maximal homogeneous ideal $A_+$.  Consequently, we have Nakayama's lemma for graded modules at our disposal, and so finitely generated projective graded modules are free. We shall say that $A$ is a {\em normal graded algebra} if it is a graded algebra 
integrally closed in its quotient field. A graded $A$-module $N=\oplus_{i\in\Z}N_i$ will always be assumed to be finitely generated. The {\em Hilbert series}
$$\GF(N;t):=\sum_{i} \dim_k(N_i)\ t^i$$
is then the Laurent expansion at $t=0$ of a rational function. For any integer $d$,  the {\em shifted graded module}
$N[d]$ is defined by $N[d]_i:=N_{d+i}$. So 
$$\GF(N[d];t)=\GF(N;t)\cdot t^{-d}.$$

Let $G$ be a finite group of graded $k$-algebra automorphisms on $A$, and $H<G$ a subgroup. 
We shall call  (the collection of cosets) $G/H$ {\em modular} if the characteristic of the base field is positive and divides $|G/H|=|G|/|H|$, and
{\em non-modular} otherwise. We shall be interested in the extension of graded algebras $A^G\subset A^H$, where $A^G$ and $A^H$
are the rings of invariants. It will be called a {\em free (graded) extension} if $A^H$ is a free graded module
over $A^G$; a {\em graded Gorenstein extension} if it is a free graded extension and the fibre algebra
$A^H/A^G_+A^H$ is a Gorenstein algebra (of Krull dimension zero); and a {\em graded complete intersection extension} 
if it is a free graded extension and the fibre algebra  is a graded complete intersection algebra. We refer to
\cite{Br6} for equivalent definitions and basic properties. For example, if $A^G\subset A^H$  is a free extension,
then one of the two invariant rings is Cohen-Macaulay {\em if and only if} the other is. If it is a graded
Gorenstein (resp. graded complete intersection) extension, then one of the two invariant rings is Gorenstein (resp. a complete intersection) {\em if and only if} the other is. Also several numerical
invariants are shared, see Avramov ~\cite{Av}. 

In most of the following definitions, we no longer need to assume that the $k$-algebra $A$ is graded.
The {\em inertia subgroup} $G_i(\gotP)$ of a prime ideal $\gotP\subset A$ is defined to be
$$G_i(\gotP):=\{\sigma\in G; \forall a\in A:\ \sigma(a)-a\in\gotP\};$$
it is a normal subgroup of the {\em decomposition group} 
$$G_d(\gotP):=\{\sigma\in G; \sigma(\gotP)=\gotP\}.$$
An element $\sigma\in G$ will be called a {\em reflection on $A$} if it
is contained in the inertia subgroup of some prime ideal of height one. The subgroup generated by all reflections on $A$ is denoted by $W$. It is a
normal subgroup of $G$. We shall say that $W$ is the {\em reflection subgroup} of $G$ acting on $A$.

The group algebra of $G$ over $k$ is denoted by $kG$.
Let $M$ be a finite dimensional $kG$-module. We call 
$$A^G(M):=(A\otimes_kM)^G$$ 
the {\em module of covariants of type $M$}. It is a finitely generated graded $A^G$-module
isomorphic to $\Hom_{kG}(M^*,A)$, where $M^*=\Hom_k(M,k)$ is the $kG$-module dual to $M$. 
If $\lambda:G\to k^\times$ is a linear character, i.e., a group homomorphism,
then 
$$A^G_\lambda:=\{a\in A;\forall \sigma\in G:\ \sigma(a)=\lambda(\sigma)a\}$$ is called the {\em module of semi-invariants
of type $\lambda$}. If $k_\lambda$ is the one dimensional $kG$-module on which $G$ acts by the
linear character $\lambda$ and $k^*_\lambda$ its dual, then 
$$A^G(k^*_\lambda)\simeq A^G_\lambda.$$

We denote the collection of all group homomorphisms  $\lambda:G\to k^\times$ by
$X(G)$; of course, it depends on the base field $k$. We denote the intersection of the kernels of all $\lambda \in X(G)$ by $G^1$. It contains
the derived group $G'$; and $G^1=G'$ if $k$ contains sufficiently many roots of unity.
The collection of semi-invariants for all types spans the subalgebra of $A^{G^1}$, and we have a finite direct sum of $A^G$-modules
$$A^{G^1}=\bigoplus_{\lambda\in X(G)} A^G_\lambda.$$

Let $G$ be a finite group of linear automorphisms of the vector space $V$ over $k$. The
coordinate ring $k[V]$ is the polynomial algebra $k[x_1,\ldots,x_n]$ on a basis of $V^*$. This is
a  graded algebra in the standard way, with $G$ as finite group of automorphisms. It is
our standard example of graded algebra. 
Let $\gotP\subset k[V]$ be a prime ideal of height one with non-trivial inertia subgroup. Then $\gotP=(f)$
for some irreducible polynomial $f\in k[x_1,\ldots,x_n]$, and there is a non-trivial $\sigma\in G$ such that
for all $i$ we have $\sigma(x_i)-x_i\in (f)$ and $\sigma(x_i)\neq x_i$ for at least one $i$. It follows
that $\gotP$ is generated by a linear form, say $x$. Then the decomposition subgroup $G_d(\gotP)$ is the subgroup of $G$ of elements stabilizing the zero set of $x$, and the inertia subgroup $G_i(\gotP)$ is the subgroup of $G$ fixing all elements  of the zero set of $x$.
So an element $\sigma\in G$ is a reflection on $k[V]$ {\em if and only if} $\sigma$ fixes point-wise some hyperplane of $V$.
The free module  $k[V]\otimes_k \wedge^iV^*$ can be identified
with the $k[V]$-module of polynomial differential $i$-forms on $V$, and $k[V]\otimes_k V$ with
the module of polynomial vector fields on $V$. So $k[V]^G(\wedge^iV^*)$ can be identified
with the $k[V]^G$-module of $G$-invariant polynomial $i$-forms, and $k[V]^G(V)$ with the module of $G$-invariant polynomial vector fields on $V$.

\subsection{The $s$-invariant}
Let $B$ be a graded algebra of Krull dimension $n$, and $N$ a graded $B$-module. Then
the numerical invariants $\deg(N)$ and $\psi(N)$ are defined by the Laurent expansion
of the Hilbert series of $N$ at $t=1$:
$$\GF(N;t)=\frac{\deg(N)}{(1-t)^n}+\frac{\psi(N)}{(1-t)^{n-1}}+O\left(\frac{1}{(1-t)^{n-2}}\right).$$
For some of the basic properties, see Benson~\cite{Benson1993}.
Closely related are the numerical invariants $r_B(N)$, called the {\em rank}, and $s_B(N)$, 
called the {\em $s$-invariant}, defined
by the Laurent expansion at $t=1$:
$$\frac{\GF(N;t)}{\GF(B;t)}=r_B(N)+s_B(N)(t-1)+O((t-1)^2).$$
Here $r_B(N)$ coincides with the usual rank of $N$ as $B$-module. 
The $s$-invariant was introduced into invariant theory by Brion~\cite{Brion} (but with a difference of sign). We refer 
to that article for some of the basic properties. For example, 
$$s_B(N[d])=s_B(N)-dr_B(N),$$
where $N[d]$ is the shifted graded module.
If $N$ is a free graded $B$-module of rank $r$ with homogeneous generators of degree $e_i$, then 
$$N\simeq \oplus_{i=1}^r B[-e_i];$$ 
and so we get the useful formula 
$$s_B(N)=\sum_{i=1}^r e_i.$$
If $\gotP\subset B$ is a homogeneous prime ideal of height at least one, then $r_B(B/\gotP)=0$ and
$$s_B(B/\gotP)=-\frac{\psi(B/\gotP)}{\deg B}.$$

The relationship between these numerical invariants is as follows.
\begin{lemma} \label{degpsi}
Let $B$ be a graded algebra, $N$ a  graded $B$-module and
$\rho: B_1\to B$ a homomorphism of graded algebras.

(i) We have $\deg N=r_B(N)\deg B$ and $\psi(N)=r_B(N)\psi(B)-s_B(N)\deg(B)$.

(ii) We have $r_{B_1}(N)=r_B(N)r_{B_1}(B)$ and
$s_{B_1}(N)=s_B(N)r_{B_1}(B)+r_{B}(N)s_{B_1}(B)$.
\end{lemma}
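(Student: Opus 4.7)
The plan is to derive both parts directly from the defining Laurent expansions, using the multiplicative identity
$$\GF(N;t) = \frac{\GF(N;t)}{\GF(B;t)} \cdot \GF(B;t)$$
together with the analogous chain involving a third ring, which lets one transfer information between the three expansions. Everything will reduce to bookkeeping of Laurent coefficients at $t=1$.

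For part (i), I substitute the defining expansion
$\GF(N;t)/\GF(B;t) = r_B(N) + s_B(N)(t-1) + O((t-1)^2)$
and the defining expansion of $\GF(B;t)$ into the identity above. Rewriting $(t-1) = -(1-t)$ so that both factors are expressed in powers of $(1-t)$, and then collecting the two leading orders in $(1-t)^{-1}$, the product becomes
$$\frac{r_B(N)\deg B}{(1-t)^n} + \frac{r_B(N)\psi(B) - s_B(N)\deg B}{(1-t)^{n-1}} + O\!\left(\frac{1}{(1-t)^{n-2}}\right).$$
Matching this against the defining expansion of $\GF(N;t)$ yields $\deg N = r_B(N)\deg B$ and $\psi(N) = r_B(N)\psi(B) - s_B(N)\deg B$.

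For part (ii), I apply the same technique to the factorization
$$\frac{\GF(N;t)}{\GF(B_1;t)} = \frac{\GF(N;t)}{\GF(B;t)} \cdot \frac{\GF(B;t)}{\GF(B_1;t)}.$$
Substituting the defining expansions of the two factors as Laurent series in $(t-1)$ and multiplying out through order $(t-1)$ gives
$$\bigl(r_B(N) + s_B(N)(t-1) + \cdots\bigr)\bigl(r_{B_1}(B) + s_{B_1}(B)(t-1) + \cdots\bigr).$$
The constant term is $r_B(N)r_{B_1}(B)$ and the coefficient of $(t-1)$ is $s_B(N)r_{B_1}(B) + r_B(N)s_{B_1}(B)$; comparing with the expansion defining $r_{B_1}(N)$ and $s_{B_1}(N)$ gives the two formulas in (ii).

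Both parts are purely formal manipulations of Laurent series, so there is no substantive obstacle. The only point requiring minor care is the sign change between expansions in $(1-t)$ (natural for Hilbert series, which have a pole of order $n$ at $t=1$) and expansions in $(t-1)$ (natural for the regular ratio at $t=1$). Once this bookkeeping is in place, all four identities fall out by comparing coefficients.
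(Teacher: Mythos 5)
Your proof is correct and is exactly the approach the paper indicates (the paper's proof simply says ``compare the Laurent series expansions at $t=1$''); you have carried out that comparison explicitly, with the signs from $(t-1)=-(1-t)$ handled correctly in both parts.
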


\begin{proof} 
This follows easily by comparing the Laurent series expansions at $t=1$.
\end{proof}

A homomorphism of graded $B$-modules $\phi:M\to N$ is called {\em pseudo-injective}
if the localization of the kernel vanishes at all homogeneous prime ideals of height at most one. Or, using the numerical
invariants, $\phi$ is pseudo-injective {\em if and only if}
$$r_B(\Ker\phi)=s_B(\Ker\phi)=0.$$
The notions
 {\em pseudo-surjective} and {\em pseudo-isomorphism} are defined similarly.

Let $M$ and $N$ be two graded $B$-modules. Then $\Hom_B(M,N)$ is also a graded $B$-module.
Benson and Crawley-Boevey~\cite{Benson1993} expressed the $\psi$-invariant of this
module in terms of the numerical invariants of $N,M$ and $B$. We write their result in terms of the $s$-invariants.

\begin{proposition}\label{Benson1}
Let $B$ be a normal graded domain and $M$ and $N$ two graded $B$-modules.

(i)
Then
$$s_B(\Hom_B(M,N))-s_B(\Ext^1_B(M,N))=r_B(M)s_B(N)-r_B(N)s_B(M).$$
If $M$ is torsion free, then $s_B(\Ext^1_B(M,N))=0$, and 
$$s_B(\Hom_B(M,B))=-s_B(M).$$

(ii) And
$$s_B(M\otimes_BN)-s_B(\Tor^B_1(M,N))=r_B(M)s_B(N)+r_B(N)s_B(M).$$
\end{proposition}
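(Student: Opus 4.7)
The plan is to deduce both identities by translating the Benson--Crawley-Boevey formulas for the $\psi$-invariant into statements about the $s$-invariant via Lemma~\ref{degpsi}(i). First I record the necessary rank data. Localising at the zero ideal of $B$ yields the quotient field $K$; the modules $M_K$ and $N_K$ are finite dimensional $K$-vector spaces of dimensions $r_B(M)$ and $r_B(N)$. Since $K$ is a field, $\Hom_K$ and $\otimes_K$ of such vector spaces have dimensions equal to the product, while $\Ext^1_K$ and $\Tor^K_1$ vanish. Because each of $\Hom$, $\otimes$, $\Ext^1$ and $\Tor_1$ commutes with the flat base change to $K$, this gives $r_B(\Hom_B(M,N)) = r_B(M)r_B(N) = r_B(M \otimes_B N)$ and $r_B(\Ext^1_B(M,N)) = 0 = r_B(\Tor^B_1(M,N))$.

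The $\psi$-identities I need are
\begin{equation*}
\psi(\Hom_B(M,N)) - \psi(\Ext^1_B(M,N)) = r_B(M)\psi(N) - r_B(N)\psi(M) + r_B(M)r_B(N)\psi(B)
\end{equation*}
for part (i), and the companion formula
\begin{equation*}
\psi(M \otimes_B N) - \psi(\Tor^B_1(M,N)) = r_B(M)\psi(N) + r_B(N)\psi(M) - r_B(M)r_B(N)\psi(B)
\end{equation*}
for part (ii). The $\Hom/\Ext$ identity is due to Benson and Crawley-Boevey; the $\otimes/\Tor$ one I would derive by the same strategy, starting from a finite graded free presentation $F_1 \to F_0 \to M \to 0$, applying $-\otimes_B N$, and invoking additivity of Hilbert series on the resulting exact sequence together with the shift identities $\Hom_B(B[-e], N) = N[e]$ and $B[-e]\otimes_B N = N[-e]$. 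Substituting $\psi(X) = r_B(X)\psi(B) - s_B(X)\deg B$ from Lemma~\ref{degpsi}(i) into every term of both identities makes the $\psi(B)$-contributions cancel by the rank identities above, and after dividing by $\deg B$ the $s$-invariant formulas of (i) and (ii) drop out.

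It remains to handle the torsion-free specialisation in (i). When $M$ is torsion free, its localisation $M_\gotp$ at any height one prime $\gotp$ is a finitely generated torsion free module over the DVR $B_\gotp$, hence free. Therefore $\Ext^1_B(M,N)_\gotp = \Ext^1_{B_\gotp}(M_\gotp, N_\gotp) = 0$ at every height one prime, so $\Ext^1_B(M,N)$ has support in codimension at least two. Its Hilbert series then has a pole of order at most $n - 2$ at $t = 1$, where $n = \dim B$, forcing both $\deg$ and $\psi$ to vanish, and combined with $r_B = 0$ this gives $s_B(\Ext^1_B(M,N)) = 0$. Specialising the formula from (i) to $N = B$, with $r_B(B) = 1$ and $s_B(B) = 0$, then produces $s_B(\Hom_B(M,B)) = -s_B(M)$. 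I expect the main obstacle to be pinning down the correct signs in the tensor analogue of the Benson--Crawley-Boevey formula; once both $\psi$-identities are on the table, the translation to $s$-invariants is purely mechanical.
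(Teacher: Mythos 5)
Your structural plan is the same as the paper's: translate the Benson--Crawley-Boevey $\psi$-formula into the $s$-formula via Lemma~\ref{degpsi}(i), after checking the rank identities by flat base change to the quotient field. The two $\psi$-identities you wrote down are exactly the correct translations (I checked the algebra), the rank computations are correct, and the torsion-free specialisation (localising at height one primes to kill $\Ext^1$, then setting $N=B$) is also correct and is essentially what the paper intends by ``$M$ torsion free''. So far, so good.

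The gap is in your sketched derivation of the $\otimes/\Tor$ $\psi$-identity. Tensoring a two-term graded free presentation $F_1 \to F_0 \to M \to 0$ with $N$ yields only a \emph{right-exact} sequence $F_1\otimes N \to F_0\otimes N \to M\otimes N \to 0$; the kernel at the $F_1\otimes N$ spot is not $\Tor^B_1(M,N)$, so ``additivity of Hilbert series on the resulting exact sequence'' does not apply. (You also cannot pass to a finite free resolution: $B$ is a normal graded domain, not necessarily regular, so finite free resolutions need not exist.) What one actually gets from the short exact sequence $0\to K\to F_0\to M\to 0$ is the four-term exact sequence $0\to\Tor^B_1(M,N)\to K\otimes N\to F_0\otimes N\to M\otimes N\to 0$, but there $K$ is only torsion free, not free, so the shift identity $B[-e]\otimes N = N[-e]$ does not compute $\psi(K\otimes N)$ and a naive induction would be circular. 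A working fix is a d\'evissage: the quantity $\psi(M\otimes N)-\psi(\Tor^B_1(M,N))-\bigl(r(M)\psi(N)+r(N)\psi(M)-r(M)r(N)\psi(B)\bigr)$ is additive in $M$ across short exact sequences (the $\Tor_{\geq 2}$ correction terms have support in codimension $\geq 2$, hence vanishing $\psi$), so one reduces to $M=(B/\gotp)[-d]$, where both sides are computed directly by localising at $\gotp$ if $\Ht\gotp=1$ (or are $0$ if $\Ht\gotp\geq 2$). Alternatively one must reproduce whatever argument Benson--Crawley-Boevey actually give for the $\Hom/\Ext$ case and dualise it, which the paper gestures at with ``proved similarly'' but your sketch does not accomplish.
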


\begin{proof} 
(i) This is just the formula of Benson and Crawley-Boevey~\cite[Theorem 3.3.2]{Benson1993} reformulated in terms of the rank and the
$s$-invariant. Part (ii) is proved similarly as the cited formula for (i).
\end{proof}

The following result also follows from Benson and Crawley-Boevey~\cite{Benson1993}.
Recall that $W$ is the reflection subgroup of $G$ acting on $A$.
\begin{proposition}\label{Benson2}
Let $A$ be a graded normal domain with a finite group $G$ of automorphisms.

(i) Let $K$ be a subgroup such that  $W\leq K\leq G$.
Then 
$$s_{A^G}(A^K)=0.$$

(ii) Suppose $G_i(\gotP)\cap G_i(\gotP')=1$ for all distinct homogeneous height one
prime ideals $\gotP$ and $\gotP'$ of $A$. Then
$$\frac{1}{|G|}s_{A^G}(A)=\sum_{\gotP}\frac{1}{|G_i(\gotP)|}s_{A^{G_i(\gotP)}}(A),$$
where the sum is over all homogeneous prime ideals of height one.

(iii) Suppose $A$ is factorial. Put $\delta_G$ for the degree of a generator of the
Dedekind different $\Diff_{A/A^G}$, and define similarly $\delta_H$ for a subgroup $H<G$. Then
 $$s_{A^G}(A^H)=\frac{|G|}{2|H|}(\delta_G-\delta_H).$$
\end{proposition}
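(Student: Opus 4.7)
The plan is to treat the three parts in reverse order, since (iii) fixes the normalisation of the basic formulas and (i) then follows by a duality argument.

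For (iii), the factoriality of $A$ makes the Dedekind different $\Diff_{A/A^G}$ a principal homogeneous fractional ideal, so its inverse is isomorphic as a graded $A$-module (and hence as a graded $A^G$-module) to $A[\delta_G]$. The canonical identification $\Hom_{A^G}(A,A^G)\simeq \Diff_{A/A^G}^{-1}$ together with Proposition~\ref{Benson1}(i) (applied to $M=A$, which is torsion-free over $A^G$) yields
\begin{equation*}
-s_{A^G}(A) \;=\; s_{A^G}(\Hom_{A^G}(A,A^G)) \;=\; s_{A^G}(A[\delta_G]) \;=\; s_{A^G}(A)-\delta_G|G|,
\end{equation*}
and hence $s_{A^G}(A)=\tfrac{|G|}{2}\delta_G$. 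The same identity applied to $H$ in place of $G$ gives $s_{A^H}(A)=\tfrac{|H|}{2}\delta_H$. Substituting both into the transitivity identity of Lemma~\ref{degpsi}(ii) for the tower $A^G\subset A^H\subset A$, with $r_{A^G}(A^H)=[G:H]$ and $r_{A^H}(A)=|H|$, and solving for $s_{A^G}(A^H)$, produces the stated formula.

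For (ii), I would follow the structure of the argument of Benson--Crawley-Boevey~\cite{BensonCB}. The central step is to expand $\GF(A;t)$ through a Molien-type identity that is valid in any characteristic for a normal graded domain with a finite group action, and then to analyse the Laurent expansion at $t=1$. Non-reflection elements of $G$ contribute only at order $(1-t)^{-n+2}$ or better, so only the inertia subgroups of height-one primes affect the $\psi$-coefficient; the disjointness hypothesis $G_i(\gotP)\cap G_i(\gotP')=1$ guarantees that each non-identity reflection is counted at exactly one height-one prime, so the $\psi$-sum reindexes cleanly as a sum over $\gotP$ with the local contribution determined by $G_i(\gotP)$. Translating the resulting $\psi$-identity into the equivalent $s$-identity via Lemma~\ref{degpsi}(i) gives (ii).

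For (i), the approach is duality in codimension one. Since $A$ is a normal graded domain, so is $A^K$, and therefore $A^K$ is a reflexive $A^G$-module; by Proposition~\ref{Benson1}(i), $s_{A^G}(\Hom_{A^G}(A^K,A^G))=-s_{A^G}(A^K)$, so the claim $s_{A^G}(A^K)=0$ reduces to the existence of a pseudo-isomorphism $A^K\simeq\Hom_{A^G}(A^K,A^G)$ of graded reflexive $A^G$-modules, i.e.\ to the vanishing of the relative Dedekind different $\Diff_{A^K/A^G}$ as a Weil divisor. Fix a homogeneous height-one prime $\gotq$ of $A^G$ and a prime $\gotP$ of $A$ above $\gotq$: every element of $G_i(\gotP)$ is by definition a reflection on $A$, so $G_i(\gotP)\leq W\leq K$, and therefore $K\cap G_i(\gotP)=G_i(\gotP)$; the inertia of $A/A^G$ and of $A/A^K$ at $\gotq$ coincide, and $A^G_\gotq\subset A^K_\gotq$ is unramified. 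The main obstacle I anticipate is promoting this codimension-one unramifiedness to the vanishing of the different, and thereby to the required pseudo-isomorphism, without appealing to separability of residue field extensions: this rules out a naive trace-pairing argument in the modular case and forces a more delicate analysis at each height-one prime, taking advantage of the reflexivity of both sides so that only pointwise data is needed.
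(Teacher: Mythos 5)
Your treatment of (iii) is correct and follows a genuinely different route than the paper's. The paper simply cites Benson's Theorem 3.12.1 to obtain $s_{A^G}(A)=\tfrac{|G|}{2}\delta_G$, whereas you derive it from the trace duality $\Hom_{A^G}(A,A^G)\simeq\Diff_{A/A^G}^{-1}\simeq A[\delta_G]$ together with Proposition~\ref{Benson1}(i): on one hand $s_{A^G}(\Hom_{A^G}(A,A^G))=-s_{A^G}(A)$, and on the other $s_{A^G}(A[\delta_G])=s_{A^G}(A)-\delta_G|G|$, forcing $s_{A^G}(A)=\tfrac{|G|\delta_G}{2}$. Both proofs then invoke Lemma~\ref{degpsi}(ii) in the tower $A^G\subset A^H\subset A$. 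Your derivation is a clean piece of self-contained work that replaces a black-box citation by the paper's own Proposition~\ref{Benson1}, which is worth noting; the paper's version just imports the $\psi$-formula from Benson's book.

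For (ii) you essentially defer to Benson--Crawley-Boevey, as does the paper (which cites Benson's Corollary~3.12.2 and translates the $\psi$-statement into the $s$-statement using $|G|\psi(A^G)-\psi(A)=\tfrac{\deg A}{|G|}s_{A^G}(A)$). Your description of the intended argument adds nothing beyond the citation and does not constitute a proof, but the paper's version is no more detailed, so this is acceptable.

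For (i) you correctly identify the mechanism: reduce to showing $\Diff_{A^K/A^G}$ has trivial divisor class, then apply the duality formula from Proposition~\ref{Benson1}(i) to a pseudo-isomorphism $A^K\simeq\Hom_{A^G}(A^K,A^G)$ to force $s_{A^G}(A^K)=-s_{A^G}(A^K)=0$. This is structurally parallel to the paper's proof, which also rests on the triviality of $\Diff_{A^K/A^G}$ and then cites Benson's Theorem~3.12.1. Where you lose the thread is in the final paragraph: your worry about inseparability of residue fields blocking the trace-pairing argument is misplaced. The Dedekind different of $A^K/A^G$ is defined via the trace pairing at the level of fraction fields, and $L^G\subset L^K$ is a subextension of the Galois extension $L^G\subset L$, hence separable; the trace pairing is nondegenerate there, so the complementary module and the different are perfectly well defined whatever the residue fields do. The actual point you need is that the different's multiplicity at a height-one prime $\gotP$ of $A$ is a local invariant governed by the inertia subgroup $G_i(\gotP)$; since $G_i(\gotP)\leq W\leq K$ gives $K_i(\gotP)=G_i(\gotP)$ for every such $\gotP$, the differents $\Diff_{A/A^G}$ and $\Diff_{A/A^K}$ agree as divisors, and transitivity of the Dedekind different then forces $\Diff_{A^K/A^G}$ to vanish as a divisor. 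That fact — that the different sees only inertia — is precisely what Benson's Theorem~3.12.1 packages, so the paper does not reprove it; your proposal recognizes it is needed but stops short of the (standard) argument. Inseparability of residue field extensions is not the obstruction; it is a form of ramification that the inertia group already records, and it contributes to the different exactly as it should.
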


\begin{proof}
(i) Since the Dedekind different of the extension $A^G\subset A^K$ is trivial, it follows from
\cite[Theorem 3.12.1]{Benson1993} that 
$$|G|\psi(A^G)=|K|\psi(A^K).$$ 
Or in terms of the
$s$-invariant,
$$s_{A^G}(A^K)=s_{A^K}(A^K)=0.$$

(ii) This is a reformulation in terms  of the $s$-invariant of \cite[Corollary 3.12.2]{Benson1993} using that
$$|G|\psi(A^G)-\psi(A)=\frac{\deg A}{|G|}s_{A^G}(A).$$

(iii) Again from \cite[Theorem 3.12.1]{Benson1993}  it follows that
$$|G|\psi(A^G)-\psi(A)=\frac{\delta_G \deg A}{2}.$$ 
Combining with the formula above we get
$$s_{A^G}(A)=\frac{|G|\delta_G}{2}.$$ 
Similarly with $G$ replaced by $H$. Now applying Lemma~\ref{degpsi}(ii) gives the result.
\end{proof}

We partially generalize this result to modules of covariants as follows.  A proof is given in
Section~\ref{ramification}.

\begin{theorem}\label{BCB}
Let $A$ be a graded normal algebra with finite group $G$ of graded $k$-algebra automorphisms, and $M$ a finite dimensional $kG$-module.

(i)
Let $K$ be a subgroup such that $W\leq K\leq G$. Then
$$s_{A^G}(A^G(M))=s_{A^K}(A^K(M)).$$
In particular, if $W$ acts trivially on $M$, then 
$s_{A^G}(A^G(M))=0.$

(ii)
Suppose $G_i(\gotP)\cap G_i(\gotP')=1$ for all distinct homogeneous height one
prime ideals $\gotP$ and $\gotP'$ of $A$. Then 
$$s_{A^G}(A^G(M))=
\sum_{\gotP}s_{A^{G_i(\gotP)}}(A^{G_i(\gotP)}(M)),$$
where the sum is over the homogeneous height one prime ideals of $A$.
\end{theorem}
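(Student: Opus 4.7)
For part (i), my plan is to compare $A^G(M)$ and $A^K(M)$ through the natural $A^K$-linear multiplication map
$$\mu \colon A^K \otimes_{A^G} A^G(M) \longrightarrow A^K(M), \qquad a \otimes \omega \mapsto a\omega,$$
and to deduce the equality of $s$-invariants from Proposition~\ref{Benson1}(ii) and Lemma~\ref{degpsi}(ii). The key geometric input is that $W\subseteq K$ forces the extension $A^G\subseteq A^K$ to be unramified in codimension one: at any height-one prime $\gotq$ of $A^G$ the localization $(A^K)_\gotq$ is a finite free $(A^G)_\gotq$-module, so $\Tor_1^{A^G}(A^K,A^G(M))$ vanishes at $\gotq$ and is pseudo-zero. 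A parallel local check, using that taking $G$- and $K$-invariants commutes with flat base change to the DVR $(A^G)_\gotq$ together with the \'etale-in-codimension-one nature of the extension, shows that $\mu_\gotq$ is an isomorphism, so $\mu$ is a pseudo-isomorphism. Applying Proposition~\ref{Benson1}(ii) together with $s_{A^G}(A^K)=0$ from Proposition~\ref{Benson2}(i) then yields
$$s_{A^G}(A^K(M)) \;=\; s_{A^G}(A^K\otimes_{A^G}A^G(M)) \;=\; [G:K]\,s_{A^G}(A^G(M)),$$
while Lemma~\ref{degpsi}(ii) applied to the tower $A^G\subseteq A^K$ and to the $A^K$-module $A^K(M)$ gives
$$s_{A^G}(A^K(M)) \;=\; [G:K]\,s_{A^K}(A^K(M)).$$
Comparing produces the desired equality. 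The ``in particular'' clause then follows by taking $K=W$: if $W$ acts trivially on $M$ then $A^W(M)=A^W\otimes_k M$ is a free $A^W$-module with generators in degree zero, so $s_{A^W}(A^W(M))=0$.

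For part (ii), I first reduce to the case $G=W$ via part (i) applied with $K=W$; the right-hand sum is unchanged because each inertia group $G_i(\gotP)$ already lies in $W$. Under the additional hypothesis that distinct inertia groups intersect trivially, I then convert the identity via Lemma~\ref{degpsi}(i) into the equivalent $\psi$-statement
$$|G|\psi(A^G(M)) - \dim(M)\cdot|G|\psi(A^G) \;=\; \sum_\gotP\Bigl[|G_i(\gotP)|\psi(A^{G_i(\gotP)}(M)) - \dim(M)\cdot|G_i(\gotP)|\psi(A^{G_i(\gotP)})\Bigr],$$
a twisted analogue of the Benson--Crawley-Boevey ramification formula of Proposition~\ref{Benson2}(ii). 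I plan to establish this by a local analysis at each height-one prime $\gotq$ of $A^G$: one decomposes the semi-local ring $A\otimes_{A^G}(A^G)_\gotq$ according to the primes $\gotP$ of $A$ lying over $\gotq$ and shows that each factor's contribution to $\psi$ of the covariant module is governed by the action of $G_i(\gotP)$ on the restriction of $M$ to $kG_i(\gotP)$.

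The hardest step will be the verification of this twisted ramification identity. The trivial-intersection hypothesis is precisely what guarantees that the local contributions attached to distinct primes $\gotP$ decouple cleanly: each reflection belongs to a unique $G_i(\gotP)$, so the $G$-orbits of primes above a given $\gotq$ contribute through their distinct inertia groups without interference. Adapting Benson--Crawley-Boevey's original computation from the scalar case to the $M$-twisted setting is the technical core; once done, comparing coefficients of the $\psi$-identity and reconverting via Lemma~\ref{degpsi}(i) completes the proof.
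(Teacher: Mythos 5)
Your overall architecture is correct and conceptually dual to the paper's: where the paper compares $\Hom_{A^K}(A,A^K(M))$ and $\Hom_{A^G}(A,A^G(M))$ via the transfer map $\Phi^G_K$ and uses Proposition~\ref{Benson1}(i), you compare $A^K\otimes_{A^G}A^G(M)$ and $A^K(M)$ via the multiplication map $\mu$ and use Proposition~\ref{Benson1}(ii). Your numerical manipulations in part (i) are correct: granting that $\mu$ is a pseudo-isomorphism and that $\Tor_1^{A^G}(A^K,A^G(M))$ is pseudo-zero, the two applications of Proposition~\ref{Benson1}(ii), Proposition~\ref{Benson2}(i), and Lemma~\ref{degpsi}(ii) do yield $[G:K]\,s_{A^G}(A^G(M))=[G:K]\,s_{A^K}(A^K(M))$, and your treatment of the ``in particular'' clause is fine. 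For (ii), the reduction to $G=W$ and the conversion to a $\psi$-identity via Lemma~\ref{degpsi}(i) are also sound.

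However, there is a genuine gap at the single point you yourself flag as hard, and it is in fact where almost all of the content resides. The claim that $\mu_\gotq$ is an isomorphism at every height-one prime $\gotq$ of $A^G$, and likewise the ``twisted ramification identity'' in (ii), do not follow from merely observing that $A^G\subset A^K$ is unramified in codimension one: one must actually show that the local contribution of the covariant module at $\gotq$ factors through the inertia group at a prime above $\gotq$, and that invariants commute with the relevant base change in this ramified-in-$A$, unramified-in-$A^K$ situation. The paper carries this out in Theorem~\ref{main} by passing to the $\gotp$-adic completion, decomposing $\widehat{A}$ via a system of orthogonal idempotents indexed by $G/G_d(\gotP)$, identifying $\widehat{A}^G\simeq(\widehat{A}e_\gotP)^{G_d}$ via $a\mapsto ae_\gotP$ with inverse $\Tr^G_{G_d}$, and then reducing to the Galois extension $R\subset S$ of DVRs with Galois group $\Gamma=G_d/G_i$ together with the twisted group ring $S\Gamma$. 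Nothing in your sketch of the ``parallel local check'' supplies a substitute for this analysis, and saying that the contributions ``decouple cleanly'' because each reflection lies in a unique inertia group is a restatement of the hypothesis in (ii), not a proof. Until that local isomorphism (or, for (ii), the length computation for the cokernel of $\Phi^G_{G_i(\gotP)}$ at $\gotP$) is established, both parts remain unproved; the rest is numerology built on top of it.
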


\begin{proof} 
See Corollary~\ref{newBenson} in the last section.
\end{proof}

In the linear case the $s$-invariant of a module of covariants is always a non-negative integer.

\begin{proposition}\label{s is integer}
Let $G$ be a finite group of linear automorphisms of the vector space $V$ and $M$ a finite dimensional $kG$-module. 
Then $s_{k[V]^G}(k[V]^G(M))$ is a non-negative integer and $s_{k[V]^G}(k[V]^G(M))=0$  {\em if and only} if the reflection subgroup $W$ acts trivially
on $M$. 
\end{proposition}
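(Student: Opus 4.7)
The plan is to reduce to codimension one using Theorem~\ref{BCB}(ii) and then invoke the freeness of $k[V]^{G_U}(M)$ stated in Proposition~\ref{codimensionone}(i). First I would verify that the hypothesis of Theorem~\ref{BCB}(ii) holds in the linear setting: the only height one homogeneous primes of $k[V]$ with non-trivial inertia are the principal ideals $(x_U)$ attached to reflection hyperplanes $U$, with inertia group $G_U$; and for two distinct hyperplanes $U\neq U'$, any element of $G_U\cap G_{U'}$ fixes $U+U'=V$ pointwise and is therefore trivial. Consequently
$$s_{k[V]^G}(k[V]^G(M))=\sum_{U\subset V,\ \codim_V U=1} s_{k[V]^{G_U}}(k[V]^{G_U}(M)),$$
with only finitely many non-zero summands.

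Next I would show each summand is a non-negative integer. By Proposition~\ref{codimensionone}(i), each $k[V]^{G_U}(M)$ is a free graded $k[V]^{G_U}$-module. Since $k[V]\otimes_k M$ is non-negatively graded (with $M$ in degree zero), so is any graded submodule, and thus a homogeneous free basis of $k[V]^{G_U}(M)$ can be chosen with generator degrees $e_{U,1},\dots,e_{U,m}\geq 0$. Therefore $s_{k[V]^{G_U}}(k[V]^{G_U}(M))=\sum_j e_{U,j}$ is a non-negative integer, and summing over $U$ establishes both the integrality and the non-negativity of $s_{k[V]^G}(k[V]^G(M))$.

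For the characterization of vanishing, one direction is immediate from Theorem~\ref{BCB}(i): if $W$ acts trivially on $M$ then $s_{k[V]^G}(k[V]^G(M))=s_{k[V]^W}(k[V]^W(M))$, and $k[V]^W(M)=k[V]^W\otimes_k M$ is free with basis in degree zero, giving $s=0$. For the converse, if $s_{k[V]^G}(k[V]^G(M))=0$ then every summand in the displayed formula vanishes, so for each $U$ all generator degrees $e_{U,j}$ are zero and $k[V]^{G_U}(M)$ is generated in degree zero. Since the degree zero part of $k[V]^{G_U}(M)$ is $M^{G_U}$ and the rank of the module equals $\dim_k M$, this forces $M^{G_U}=M$, i.e., $G_U$ acts trivially on $M$. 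As $W$ is generated by reflections and each reflection lies in some $G_U$, $W$ acts trivially on $M$. Since the substantive work is done in Theorem~\ref{BCB} and Proposition~\ref{codimensionone}, the argument is essentially an assembly; the only subtle step is the last one, where the freeness, the generation in degree zero, and the rank equality must be combined to conclude that $G_U$ fixes $M$ pointwise.
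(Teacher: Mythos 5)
Your proposal is correct and follows essentially the same route as the paper: reduce to the codimension-one inertia groups via Theorem~\ref{BCB}(ii), use Proposition~\ref{codimensionone}(i) to get freeness, observe the generator degrees are non-negative, and for the vanishing case extract that the degree-zero free generators span all of $M$ and are $G_U$-fixed. The paper packages the degree-zero argument as a preliminary ``special case'' lemma about free modules of covariants, whereas you inline it directly, but the substance is identical.
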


\begin{proof}
We start with a special case.
If $k[V]^G(M)$ is free over $k[V]^G$ with homogeneous generators $\omega_1,\ldots,\omega_m$,
then the $s$-invariant of $k[V]^G(M)$ is $\sum_i\deg(\omega_i)$. Since $k[V]^G(M)$ has no elements of negative degree it follows that the $s$-invariant
is a non-negative integer. If the $s$-invariant is $0$, then each generator is of degree $0$ and hence
of the form 
$$\omega_i =1\otimes u_i\in (k[V]\otimes M)^G.$$ 
Here each $u_i\in M^G$, and
$u_1,\ldots,u_m$ form a basis of $M$. It follows that $G$ acts trivially on $M$.

Now we no longer assume freeness.
The conditions of Theorem~\ref{BCB}(ii) are satisfied for a linear action. By Proposition~\ref{codimensionone}(i) or Hartmann--Shepler~\cite{HartmannS}, for every homogeneous prime ideal $\gotP\subset k[V]$ of height one
the module of covariants $k[V]^{G_i(\gotP)}(M)$ is free and so by the special case above,
$s_{k[V]^{G_i(\gotP)}}(k[V]^{G_i(\gotP)}(M))\geq 0$ and 
$s_{k[V]^{G_i(\gotP)}}(k[V]^{G_i(\gotP)}(M))=0$ {\em if and only if} $G_i(\gotP)$ acts
trivially on $M$. Since the $G_i(\gotP)$'s generate $W$, applying Theorem~\ref{BCB} gives the result.
\end{proof}

A short exact sequence of $kG$-modules 
$$0\to M_1\to M_2\to M_3\to 0$$ 
gives rise
to a left  exact sequence of modules of covariants
$$0\to A^G(M_1)\to A^G(M_2)\to A^G(M_3).$$

\begin{lemma}
Let 
$$0\to M_1\to M_2\to M_3\to 0$$ 
be a short exact sequence of finite dimensional $kG$-modules. Put $K$ for the
kernel of the action of $G$ on $M_2$. If the relative trace ideal $\Tr^G_K(A^K)\subseteq A^G$ has
height bigger than $1$, then
$$s_{A^G}(A^G(M_2))=s_{A^G}(A^G(M_1))+s_{A^G}(A^G(M_3)).$$
Here $\Tr^G_K(a)=\sum_{\sigma K\in G/K}\sigma(a),$ for $a\in A^K$.
\end{lemma}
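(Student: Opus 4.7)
The plan is to apply the $G$-invariants functor to the short exact sequence $0\to A\otimes_k M_1 \to A\otimes_k M_2 \to A\otimes_k M_3 \to 0$ of $kG$-modules and to work with the four-term exact sequence
$$0\to A^G(M_1)\to A^G(M_2)\to A^G(M_3)\to C\to 0,$$
in which $C$ denotes the cokernel of the last nontrivial map. Hilbert series are additive on exact sequences, and hence so is the $s$-invariant, so the identity to be proved is equivalent to $s_{A^G}(C)=0$.

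Next I would argue that it suffices to show that the support of $C$ in $\operatorname{Spec}A^G$ has codimension at least two. Indeed, if $\dim C\leq n-2$, then $\GF(C;t)$ has a pole of order at most $n-2$ at $t=1$, whereas $\GF(A^G;t)$ has a pole of order exactly $n=\dim A^G$ (which equals $\dim A$ since $A$ is integral over $A^G$), so $\GF(C;t)/\GF(A^G;t)=O((t-1)^2)$ and both $r_{A^G}(C)$ and $s_{A^G}(C)$ vanish.

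To bound the support of $C$, I would use that $K$ acts trivially on each $M_i$ (it does so on $M_2$, hence on the sub- and quotient modules), so $(A\otimes M_i)^K=A^K\otimes M_i$, and the four-term sequence above factors through the long exact cohomology sequence attached to the short exact sequence of $k(G/K)$-modules $0\to A^K\otimes M_1\to A^K\otimes M_2\to A^K\otimes M_3\to 0$. In particular $C$ embeds into $H^1(G/K,A^K\otimes M_1)$. Setting $J=G/K$, I would then verify that $\Tr^G_K(A^K)$ annihilates this $H^1$: given $a\in A^K$ and a $1$-cocycle $c\colon J\to A^K\otimes M_1$, a direct computation using the cocycle identity $c(\ell^{-1}j)=c(\ell^{-1})+\ell^{-1}c(j)$ shows that the element
$$m:=\sum_{k\in J} k(a)\,k\bigl(c(k^{-1})\bigr)$$
satisfies $jm-m=\Tr^G_K(a)\cdot c(j)$ for every $j\in J$, so $\Tr^G_K(a)\cdot c$ is a coboundary.

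Consequently $C$ is annihilated by the relative trace ideal $\Tr^G_K(A^K)A^G$, whose vanishing locus has codimension at least two by hypothesis, and the dimension reduction above gives $s_{A^G}(C)=0$ and the desired additivity. The delicate step is the cohomological annihilation by $\Tr^G_K$; once that is in hand, the remainder is bookkeeping with Hilbert series and the observation that modules of Krull dimension at most $n-2$ contribute nothing to the $s$-invariant.
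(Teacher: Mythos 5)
Your proof is correct and follows essentially the same route as the paper: identify the failure of additivity with the cokernel $C$ of $A^G(M_2)\to A^G(M_3)$, embed $C$ in $H^1(G/K,\,A^K\otimes_kM_1)$ via the long exact cohomology sequence (using that $K$ acts trivially on each $M_i$), and use the hypothesis on the relative trace ideal to force $\operatorname{Supp}C$ to have codimension at least two so that $r_{A^G}(C)=s_{A^G}(C)=0$, after which additivity of Hilbert series finishes the job. The only difference from the paper is that you verify by an explicit cocycle computation that $\Tr^G_K(A^K)$ annihilates the cohomology, while the paper cites this fact (Derksen--Kemper, or Lorenz--Pathak); your computation $jm-m=\Tr^G_K(a)\cdot c(j)$ with $m=\sum_{\ell\in G/K}\ell(a)\,\ell(c(\ell^{-1}))$ is correct and makes the argument pleasingly self-contained.
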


\begin{proof}
The normal subgroup $K$ also acts trivially on $M_1$ and $M_3$. So 
$$(A\otimes_k M_i)^K\simeq
A^K\otimes M_i$$ 
and we get a short
exact sequence
$$0\to A^K\otimes_kM_1\to A^K\otimes_kM_2\to A^K\otimes_kM_3\to 0$$
of $G/K$-modules.
From the theory of group cohomology we get a long exact sequence of graded $A^G=(A^K)^{G/K}$-modules 
$$0\to A^G(M_1)\to A^G(M_2)\to A^G(M_3)\to H^1(G/K, A^K\otimes M_1)\to H^1(G/K,A^K\otimes M_2)\ldots.$$
It is known that all $H^i(G/K,A^K\otimes M_j)$, $i\geq 1$, are annihilated by the ideal $\Tr^G_K(A^K)\subseteq A^G$, e.g.~\cite{DerksenKemper} or \cite[Lemma 1.3]{LP}.
So the cokernel of 
$$A^G(M_2)\to A^G(M_3)$$ 
is annhilated by $\Tr^G_K(A^K)$ as well. Our
hypothesis implies therefore that the  support of the cokernel of 
$$A^G(M_2)\to A^G(M_3)$$
is of codimension $\geq 2$, and hence is pseudo-zero with vanishing $s$-invariant. 
\end{proof}

\begin{example}\label{firstexample}
Let $k=\F_2$, $V=\F_2^4$, and $G$ the abelian group generated by the three reflections
$$\sigma_1:=\left(\begin{matrix}1&0&0&0\\0&1&0&0\\0&0&1&0\\1&0&0&1\\\end{matrix}\right),\
\sigma_2:=\left(\begin{matrix}1&0&0&0\\0&1&0&0\\0&1&1&0\\0&0&0&1\\\end{matrix}\right),\ 
\sigma_3:=\left(\begin{matrix}1&0&0&0\\0&1&0&0\\1&1&1&0\\1&1&0&1\\\end{matrix}\right).$$
Let $x_1,x_2,x_3,x_4$ be the basis of $V^*$ dual to the standard basis of $V$. Then
$k[V]$ is the polynomial ring $\F_2[x_1,x_2,x_3,x_4]$. The only homogeneous prime ideals of height one in $k[V]$ with non-trivial
inertia subgroups are the principal ideals $\gotP_1=(x_1)$, $\gotP_2=(x_2)$ and $\gotP_3=(x_1+x_2)$. The
inertia subgroup $H_i$ of $\gotP_i$ is $\{1,\sigma_i\},$ for $i=1$, $2$ or $3$.
Let 
$$M_1:=\F_2 x_1+\F_2x_2+\F_2x_3.$$ 
Then $M_1$ is a $kG$-submodule of $M_2:=V^*$, and the
quotient module $M_3=M_2/M_1$ is the trivial $kG$-module. 

Free generators of $(k[V]\otimes M_1)^{H_i}$ are $1\otimes x_1$, $1\otimes x_2$,
together with  $1\otimes x_3$ if $i=1$, or with $x_2\otimes x_3+x_3\otimes x_2$ if $i=2$,
or with $(x_1+x_2)\otimes x_3+x_3\otimes(x_1+x_2)$ if $i=3$.
So 
$$s_{k[V]^G}(k[V]^G(M_1))=0+1+1=2.$$

Free generators of $(k[V]\otimes M_2)^{H_i}$ are $1\otimes x_1$, $1\otimes x_2$,
together with  $1\otimes x_3$ and $x_1\otimes x_4+x_4\otimes x_1$ if $i=1$, or with
$x_2\otimes x_3+x_3\otimes x_2$ and  $1\otimes x_4$ if $i=2$,
or with $(x_1+x_2)\otimes x_3+x_3\otimes(x_1+x_2)$
and $1\otimes(x_3+x_4)$ if $i=3$.
So 
$$s_{k[V]^G}(k[V]^G(M_2))=1+1+1=3.$$

Since $M_3$ is the trivial $kG$-module, we have 
$$s_{k[V]^G}(k[V]^G(M_3))=0.$$
Indeed, in this case 
$$s_{A^G}(A^G(M_2))\neq s_{A^G}(A^G(M_1))+s_{A^G}(A^G(M_3)).$$ 
So the ideal
$\Tr^G(k[V])\subset k[V]^G$ must be of height one.
\end{example}

\begin{example}\label{secondexample}
Let $k=\F_p$ be the prime field of order $p$, $V$ an $n$ dimensional vector space over $\F_p$, and $G$ a  group of $\F_p$-linear
automorphisms of $V$. Let $U\subset V$ be a linear hyperplane defined by the linear form $x_U$, and let $G_U$ be its point stabilizer.
It has a semi-direct product decomposition 
$$G_U=(\Z/p\Z)^{a_U}\rtimes \Z/h_U\Z,$$ 
where
$a_U\leq n-1$ and $h_U$ divides $p-1$. Let $\zeta$ be a primitive $h_U$-th root of unity in $\F_p$.
We can choose generators $\tau$ and
$\sigma_1,\ldots,\sigma_{a_U}$ of $G_U$ and a basis $x_1=x_U,x_2,\ldots,x_n$ of coordinate functions,
such that 
$$\tau x_1=\zeta x_1,\ \sigma_i x_{i+1}=x_{i+1}+x_1,\ 1\leq i \leq a_U,$$
 and all
other actions are trivial. Then a free basis for $(\F_p[V]\otimes V^*)^{G_U}$ is
$$x_1^{h_U-1}\otimes x_1,\ x_{i+1}\otimes x_1-x_1\otimes x_{i+1},\ {\rm if}\ 
1\leq i\leq a_U,\ {\rm and}\
1\otimes x_i\ {\rm if}\ i>a_{U}+1.$$ 
Then 
$$s_{k[V]^{G_U}}(k[V]^{G_U}(V^*))=(h_U-1+a_U)$$ 
and so
$$s_{\F_p[V]^G}(\F_p[V]^G(V^*))=
\sum_{U\subset V}(h_U-1+a_U),$$
 where the sum is over the linear hyperplanes of $V$. 
On the other hand, Benson  and Crawley-Boevey \cite[Theorem 3.13.2]{Benson1993} calculated that
$$s_{\F_p[V]^G}(\F_p[V])=\frac{|G|}{2}\delta,$$ where
$$\delta=\sum_{U\subset V}(h_U-1+(p-1)a_U)$$
is the degree of a generator of the Dedekind different $\Diff_{k[V]/k[V]^G}$.
We conclude that if either $p=2$ or $G$ is transvection free, then 
$$s_{\F_p[V]^G}(\F_p[V]^G(V^*))=\delta.$$
\end{example}

\section{Reflexive modules of covariants}
Let $A$ be a normal graded algebra, and $A^G$ the invariant ring of a finite group action. A well-known fundamental fact is that  $A^G$ is also a normal graded algebra, see~\cite{Benson1993}.
Less well-known is that  all modules of covariants $A^G(M)=(A\otimes_k M)^G$ 
inherit an analogous good property, namely that they are reflexive finitely
generated graded modules, see e.g. Brion~\cite{Brion}. In this section we shall reprove
this property, and use the fundamental property that a quasi-isomorphism between
reflexive modules is already an isomorphism.  

We recall that a finitely generated module $M$ over a ring $R$ is called {\em reflexive} if the natural map
$M\to M^{**}$  is an isomorphism, where $M^*=\Hom_R(M,R)$ is the dual module.
For example, free modules are reflexive and reflexive modules are torsion free.
For more information, see \cite[VII \S 4]{Bourbaki} or \cite[\S 3.4]{Benson1993}.
That reflexivity is the module analogue of normality is shown clearly in the next lemma.

\begin{lemma}\label{classic}
Let $B$ be a normal graded algebra with quotient field $K$ and $N$ a finitely generated torsion free graded module. Then the following are equivalent.

(i) $N$ is reflexive;

(ii) $N=\cap_{\gotp}N_\gotp$, where the intersection is inside
the vector space $N\otimes_BK$ running over all prime ideals $\gotp\subset B$ of height one ;

(iii) Every regular sequence of length  two on $B$ is also a regular sequence of length two
on $N$.
\end{lemma}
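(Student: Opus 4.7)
The plan is to split the chain into (i) $\Leftrightarrow$ (ii), which comes from the standard description of the double dual, and (ii) $\Leftrightarrow$ (iii), which I will handle by an associated-primes argument exploiting that $B$ is $S_2$.

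For (i) $\Leftrightarrow$ (ii), I would invoke the classical identification (e.g.\ Bourbaki AC VII \S 4, or Benson \S 3.4) that, over a normal noetherian domain $B$ with quotient field $K$, the canonical injection $N \hookrightarrow N^{**}$ realises
$$N^{**}=\bigcap_\gotp N_\gotp$$
inside $N\otimes_B K$, where $\gotp$ runs over primes of height one. Granting this, $N=N^{**}$ is precisely the statement that $N$ coincides with this intersection, which is exactly (ii).

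For (ii) $\Rightarrow$ (iii), let $(b_1,b_2)$ be a $B$-regular sequence. Torsion-freeness of $N$ makes $b_1$ a non-zero-divisor on $N$, so only regularity of $b_2$ on $N/b_1N$ needs proof. If $b_2n=b_1n'$ with $n,n'\in N$, set $m:=n/b_1=n'/b_2\in N\otimes_B K$. At every height-one prime $\gotp$, either $b_1\notin\gotp$ (so $b_1$ is a unit in $B_\gotp$ and $m=n/b_1\in N_\gotp$) or $b_1\in\gotp$; in the second case normality of $B$ gives Serre's condition $S_2$, hence the associated primes of $B/b_1 B$ are exactly the height-one minimal primes of $(b_1)$, so regularity of $b_2$ modulo $b_1$ forces $b_2\notin\gotp$, whence $m=n'/b_2\in N_\gotp$. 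By (ii), $m\in N$ and so $n=b_1m\in b_1N$.

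For (iii) $\Rightarrow$ (ii), set $N':=\bigcap_\gotp N_\gotp$, a finitely generated torsion-free $B$-module with $N\subseteq N'$ and $N'_\gotp=N_\gotp$ at every height-one prime, so $\mathrm{Supp}(N'/N)$ lies in codimension $\geq 2$. If $N'/N\neq 0$, pick an associated prime $\gotq$; then $\Ht(\gotq)\geq 2$ and, since $B$ is $S_2$, $\gotq$ contains a $B$-regular sequence $(b_1,b_2)$. Choose $m\in N'$ whose class in $N'/N$ is annihilated by $\gotq$, so that $b_1m,b_2m\in N$. Then $b_2\cdot(b_1m)=b_1\cdot(b_2m)\in b_1N$; by (iii), $b_2$ is regular on $N/b_1N$, so $b_1m\in b_1N$, and cancelling the non-zero-divisor $b_1$ yields $m\in N$, contradicting $\bar m\neq 0$.

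The substantive step is (iii) $\Rightarrow$ (ii); its crux is producing a length-two $B$-regular sequence inside every height-$\geq 2$ associated prime of $N'/N$, which is exactly where Serre's condition $S_2$, a consequence of $B$ being normal, is indispensable.
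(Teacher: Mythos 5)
Your proposal is a complete, correct proof; the paper itself does not prove this lemma but simply cites Samuel's article (Proposition~1 of \emph{Anneaux gradu\'es factoriels et modules r\'eflexifs}), so there is no in-paper argument to compare against. Your route is the standard one: (i)~$\Leftrightarrow$~(ii) via the identification $N^{**}=\bigcap_{\Ht\gotp=1}N_\gotp$ inside $N\otimes_BK$, and then (ii)~$\Leftrightarrow$~(iii) by exploiting Serre's condition $S_2$, which holds for $B$ because it is normal. The (ii)~$\Rightarrow$~(iii) step correctly uses that $B/b_1B$ has no embedded primes to rule out $b_2\in\gotp$ at each height-one $\gotp$ containing $b_1$, and (iii)~$\Rightarrow$~(ii) correctly locates a length-two $B$-regular sequence inside a height-$\geq 2$ associated prime of $N'/N$ via $\mathrm{grade}(\gotq,B)=\min\{\mathrm{depth}\,B_\gotp:\gotp\supseteq\gotq\}\geq 2$. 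Two small remarks: you assert that $N'=\bigcap_\gotp N_\gotp$ is finitely generated without justification --- this does hold because $N'\simeq N^{**}$ over the Noetherian ring $B$, but it is in fact unnecessary, since any nonzero module over a Noetherian ring has an associated prime; and the (iii)~$\Rightarrow$~(ii) step quietly uses non-homogeneous regular sequences (as $\gotq$ need not be a graded prime), which is fine because the lemma's condition~(iii) is stated for all regular sequences, and indeed the paper notes that Samuel's proof does not use the grading. The net effect of your write-up is to replace an external citation by a short self-contained argument, which is a genuine improvement in readability for this step.
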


\begin{proof}
See Samuel~\cite[Proposition 1]{Samuel}. In the proof it is not needed that $B$ is graded.
\end{proof}

In the applications, we shall mainly use the following properties of reflexive modules.

\begin{lemma}\label{reflexive}
Let $B$ be a normal graded algebra and $M$ and $N$ two reflexive graded $B$-modules.

(i) If $\phi:M\to N$ is a pseudo-isomorphism, then it is an isomorphism.

(ii) The $B$-module $\Hom_B(M,N)$ is also a reflexive graded module.

(iii) If $B\subset A$ is a finite free graded extension, then $A\otimes_BM$ is a reflexive
$A$-module.

(iv) Let $B\subset A$ be a finite extension of normal graded algebras whose
extension of quotient fields is separable.  A graded $A$-module
is reflexive as $A$-module {\em if and only if} it is reflexive considered as $B$-module by restriction.
\end{lemma}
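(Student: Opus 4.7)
The plan is to reduce each part to one of the two characterizations of reflexivity in Lemma~\ref{classic}: the intersection description (ii) handles parts (i) and (iv), and the regular-sequence criterion (iii) handles parts (ii) and (iii).

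For (i), a pseudo-isomorphism $\phi:M\to N$ is by definition an isomorphism after localizing at any homogeneous prime of height at most one. Since $M$ and $N$ are reflexive they are torsion-free, and $\phi$ identifies their generic fibres, so Lemma~\ref{classic}(ii) gives
\[
M=\bigcap_\gotp M_\gotp=\bigcap_\gotp N_\gotp=N
\]
with intersections taken over height-one primes of $B$. For (ii), I would verify the regular-sequence criterion. If $(x,y)$ is a $B$-regular sequence it is $N$-regular (as $N$ is reflexive), so $x$ acts regularly on $\Hom_B(M,N)$ by torsion-freeness of $N$. If $y\phi=x\psi$, then $y\phi(m)=x\psi(m)\in xN$ for every $m$, and $y$-regularity modulo $x$ forces $\phi(m)\in xN$; the unique lift $\psi'(m)\in N$ with $\phi(m)=x\psi'(m)$ is automatically $B$-linear by uniqueness, yielding $\phi=x\psi'$.

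For (iii), since $A$ is finitely generated free over $B$ there is a canonical isomorphism $\Hom_B(M,A)\cong A\otimes_B\Hom_B(M,B)$ of $A$-modules. Combined with the adjunction $\Hom_A(A\otimes_B M,A)=\Hom_B(M,A)$ and iterated once, this yields $(A\otimes_B M)^{**}\cong A\otimes_B M^{**}=A\otimes_B M$, and naturality identifies the biduality map with $A\otimes_B(M\to M^{**})$, which is an isomorphism. For (iv), I would compare the two intersection descriptions. In a finite extension of normal graded domains, going-down holds and dimension is preserved, so the height-one primes of $A$ are exactly those lying over the height-one primes of $B$. For $\gotp\subset B$ of height one with primes $\gotQ_1,\ldots,\gotQ_r$ of $A$ above it, a going-up argument shows $M_\gotp=\bigcap_{i} M_{\gotQ_i}$ inside the common generic fibre: given $x\in\bigcap_i M_{\gotQ_i}$ and $t_i\in A\setminus\gotQ_i$ with $t_ix\in M$, the ideal $(t_1,\ldots,t_r)\subset A$ meets no $\gotQ_j$, hence its contraction to $B$ is not contained in $\gotp$, producing $s\in B\setminus\gotp$ with $sx\in M$. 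Intersecting over all height-one primes then gives $\bigcap_\gotp M_\gotp=\bigcap_\gotQ M_\gotQ$, and torsion-freeness transfers through the common generic fibre, where separability is used to guarantee $A\otimes_B K=L$.

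The main obstacle will be (iv): one must carefully match up the height-one primes of the two rings and establish the intersection identity $M_\gotp=\bigcap_i M_{\gotQ_i}$ by a going-up argument, with the separability hypothesis ensuring that the generic fibres over $A$ and $B$ sit inside a common ambient space.
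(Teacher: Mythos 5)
Your proof is correct, and for parts (i) and (iv) it follows essentially the same path as the paper: (i) is the same identification of $M$ and $N$ inside the common generic fibre followed by the intersection characterization from Lemma~\ref{classic}(ii), and (iv) hinges on the same reduction to the identity $M_\gotp=\bigcap_i M_{\gotQ_i}$ for the finitely many height-one primes of $A$ above a height-one prime $\gotp$ of $B$. Where you genuinely diverge is in the details. For (ii) and (iii) the paper simply cites Benson's Lemma 3.4.1(v) and Bourbaki VII \S 4.2 Prop.~8, respectively, whereas you give self-contained arguments: a direct verification of the depth-two (regular-sequence) criterion for $\Hom_B(M,N)$, and a base-change-of-duality argument $(A\otimes_BM)^{**}\cong A\otimes_BM^{**}$ using $A$-freeness over $B$. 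Both are correct and arguably more instructive than a citation, at the cost of some naturality checking in (iii). In (iv), after prime avoidance produces $u\in A\setminus\bigcup_i\gotQ_i$ with $ux\in M$, the paper pushes the denominator into $B$ via the norm $N_{L/K}$, while you instead contract the ideal $(t_1,\dots,t_r)$ to $B$ and invoke Lying Over (closedness of $\operatorname{Spec} A\to\operatorname{Spec} B$) to find $s\in B\setminus\gotp$; your variant is cleaner in that it sidesteps the somewhat delicate ``conjugates of $s$'' step in the paper's norm argument. One small inaccuracy: you attribute separability to the fact that $A\otimes_BK=L$, but that identity holds for any finite extension of domains (a finite $K$-algebra that is a domain is a field); separability is not really playing the role you assign it, and in fact neither your argument nor the paper's appears to require it at this point. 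This does not affect the correctness of your proof, since separability is a standing hypothesis of the lemma in any case.
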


\begin{proof}
(i) Let $K$ be the quotient field of $B$. Since the ranks of the kernel and the cokernel are zero, it follows
that 
$$K\otimes_BM\simeq K\otimes_BN.$$ 
Identify both $K$-vector spaces and write $V=K\otimes_BM$. Since $M$ is reflexive,
$$M=\cap_{\gotP} M_\gotP,$$ 
where the intersection is taken inside $V$ and $\gotP$ runs over all height one
prime ideals, cf. \cite[VII \S 4.2 Th\'eor\`eme 2]{Bourbaki}. The same for $N$. Since $\phi$ is a pseudo-isomorphism we have, for each
height one prime ideal $\gotP$ in $B$, that $M_\gotP=N_\gotP\subset V$. Hence $M=N$
and $\phi$ is an isomorphism.

For (ii), use \cite[Lemma 3.4.1(v)]{Benson1993} and for (iii), use \cite[VII \S 4.2 Proposition 8]{Bourbaki}.

(iv) Let $M$ be a graded $A$-module. For any height one prime ideal $\gotp\subset B$ let $\gotP_1,\ldots,\gotP_t$ be
the prime ideals in $A$ lying over $\gotp$. Then all $\gotP_i$'s are of height one. It suffices to prove
that 
$$M_\gotp=\cap_i M_{\gotP_i}.$$ 
Let $\frac{m}{s}$ be in the intersection, with $m\in M$ and
$s\in A$, $s\neq 0$. For every $i$ there exists an $m_i\in M$ and $s_i\in A\backslash \gotP_i$
such that $\frac{m}{s}=\frac{m_i}{s_i}$. Let $I$ be the ideal in $A$  generated by $s_1,s_2,\ldots$.
Then $I$ is not contained in the union of the $\gotP_i$'s,  by the prime avoidance lemma. So there is a $u=\sum_i x_i s_i\in I$
that is not in the union of the $\gotP_i$'s,  hence in none of them. Then
$$um=\sum_i x_is_im=s\sum_ix_im_i$$ 
and so 
$$\frac{m}{s}=\frac{\sum_ix_im_i}{u}.$$ 
Hence,
we can assume that $s$ is not in any of the $\gotP_i$'s, nor that any of the conjugates
of $s$ is in any of the $\gotP_i$'s. Using  the norm of $s$, 
we can assume that $s$ is in $B$ and not in any of the $\gotP_i$'s, hence not in $\gotp$.
So 
$$M_\gotp=\cap_i M_{\gotP_i}.$$
\end{proof}

\subsection{Modules of covariants are reflexive}
In this subsection, we shall reprove that all modules of covariants are finitely generated reflexive modules over the ring of invariants, using Samuel's Lemma~\ref{classic}. For another proof, see Brion~\cite{Brion}.

\begin{proposition} \label{reflexivecovariants}
Suppose $A$ is a normal graded algebra.
Every module of covariants $A^G(M)$ is a reflexive, finitely generated
graded $A^G$-module of rank equal to $\dim_kM$.
\end{proposition}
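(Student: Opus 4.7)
The plan is to first establish that $A$ itself is a reflexive $A^G$-module, then pass to the ambient module $A \otimes_k M$, and finally descend to its $G$-invariants using Samuel's intersection criterion. I proceed in three steps.

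\emph{Finite generation and rank.} Since $G$ is finite, $A$ is integral over $A^G$ and hence a finitely generated $A^G$-module; consequently $A \otimes_k M$ is finitely generated over $A^G$, and the submodule $A^G(M)$ is finitely generated because $A^G$ is Noetherian. Write $L := \operatorname{Frac}(A)$ and $K := \operatorname{Frac}(A^G)$. Because $A$ is a domain, $L^G = K$ and $L/K$ is a finite Galois extension with group $G$, in particular separable. Localization at any $G$-invariant multiplicative subset commutes with taking $G$-invariants (clear denominators by a common $G$-invariant factor using finiteness of $G$), and $S^{-1}A = L$ for $S = A^G \setminus \{0\}$ because a domain integral over a field is a field. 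Therefore
$$K \otimes_{A^G} A^G(M) = \bigl(L \otimes_k M\bigr)^G,$$
and Galois descent applied to the semilinear $L[G]$-module $L \otimes_k M$ gives $\dim_K (L \otimes_k M)^G = \dim_L (L \otimes_k M) = \dim_k M$, the required rank.

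\emph{Reflexivity of $A \otimes_k M$ over $A^G$.} Since $K \subset L$ is separable, Lemma~\ref{reflexive}(iv) applies to the extension of normal graded domains $A^G \subset A$ and reduces reflexivity of $A$ as an $A^G$-module to reflexivity of $A$ over itself, which is trivial. Thus $A \otimes_k M \simeq A^{\oplus \dim_k M}$ is reflexive over $A^G$, and by Lemma~\ref{classic}(ii),
$$A \otimes_k M = \bigcap_{\gotp} (A \otimes_k M)_{\gotp}$$
inside $L \otimes_k M$, with $\gotp$ ranging over the height-one primes of $A^G$.

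\emph{Descent to $A^G(M)$.} Using once more that localization at each $A^G \setminus \gotp$ commutes with $G$-invariants, and that taking $G$-invariants commutes with intersection inside a common ambient $G$-module, I compute
$$\bigcap_{\gotp} (A^G(M))_{\gotp} = \bigcap_{\gotp} \bigl((A \otimes_k M)_{\gotp}\bigr)^G = \Bigl(\bigcap_{\gotp} (A \otimes_k M)_{\gotp}\Bigr)^G = (A \otimes_k M)^G = A^G(M),$$
the ambient intersection being taken inside $(L \otimes_k M)^G = K \otimes_{A^G} A^G(M)$. A second application of Lemma~\ref{classic}(ii) now yields reflexivity of $A^G(M)$. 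The only technical point requiring care is the interchange of localization with $G$-invariants, which relies crucially on finiteness of $G$; the rest is a direct assembly of Samuel's criterion, Galois descent, and the preparatory Lemma~\ref{reflexive}.
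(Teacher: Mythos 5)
Your proposal is correct, and while it shares the paper's overall structure — first establish that $A\otimes_k M$ is reflexive over $A^G$, then descend to the $G$-invariants — the descent step is carried out via a genuinely different criterion of Lemma~\ref{classic}. The paper invokes part (iii): it takes a length-two regular sequence $u,v$ on $A^G$, which is already regular on $A\otimes_k M$ by reflexivity of $A$, and shows the element $\omega\in A\otimes_k M$ with $\omega_1=u\omega$ must in fact lie in $A^G(M)$ by a short $G$-equivariance trick (if $\sigma(\omega)\neq\omega$ then $u(\omega-\sigma(\omega))=0$ contradicts regularity of $u$). You instead use the intersection criterion, part (ii), and the descent reduces to the purely formal commutation of taking $G$-invariants with localization at $G$-stable multiplicative subsets and with intersections inside a common ambient module; this is cleaner and more transparent, at the cost of having to verify the localization interchange, which you handle correctly by clearing denominators with a common $G$-invariant factor. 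You also depart on auxiliary points: you prove finite generation directly from integrality of $A$ over $A^G$ and Noetherianity, where the paper passes through the symmetric algebra on $A\otimes_k M$ and Noether finiteness for $S^G$; you compute the rank via Galois descent rather than the normal basis theorem (the same fact in different clothing); and you deduce reflexivity of $A$ over $A^G$ from Lemma~\ref{reflexive}(iv), whereas the paper cites the corresponding result in Bourbaki. One small omission: Lemma~\ref{classic}(ii) is stated for finitely generated \emph{torsion free} modules, and you should record, as the paper does, that $A^G(M)$ is torsion free over $A^G$ because it is a submodule of $A\otimes_k M\simeq A^{\oplus\dim_k M}$, itself torsion free over the domain $A^G$; the hypothesis is used twice, once for $A\otimes_kM$ and once for $A^G(M)$.
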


\begin{proof}
We first prove that $A^G(M)$ is a finitely generated, torsion free, graded $A^G$-module of rank $\dim_kM$.
Since $A^G(M)$ is a submodule of the torsion free $A^G$-module $A\otimes_kM$, it is torsion free.
Let $L$ be the quotient field of $A$. Then $K:=L^G$ is the quotient field of $A^G$, and
$$A^G(M)\otimes_{A^G}K\simeq (L\otimes_k M)^G.$$ 
By the normal basis theorem of Galois theory, there exists a $z\in L$ such that $\{\sigma(z);\sigma\in G\}$ is a $K$-basis of $L$. If $v_1,\ldots,v_m$ is a $k$-basis
for $M$, then it is easily shown that
$$\{ \sum_{\sigma\in G}(\sigma(z)\otimes \sigma(v_i)), \ 1\leq i\leq m\}$$ 
is a $K$-basis for 
$(L\otimes_k M)^G$. We conclude that  $A^G(M)$ has rank $\dim_k M$ over $A^G$.

Let 
$$S=\oplus_{i\geq 0} (A\otimes_k S^iM)=A\otimes_k \oplus_{i\geq 0} S^iM$$ 
be the symmetric algebra on the free $A$-module $A\otimes_k M$.
It is a finitely generated algebra, therefore its  invariant ring 
$$S^G=\oplus_{i\geq 0}A^G(S^iM)$$
is a finitely generated algebra over its subalgebra $A^G$. The homogeneous $A^G$-algebra generators
of $S^G$ that are contained in $A^G(M)$ are also homogeneous $A^G$-module generators of $A^G(M)$. So $A^G(M)$ is a finitely generated $A^G$-module.

By Lemma~\ref{classic} a finitely generated, torsion free graded
module is reflexive if every regular sequence $u,v$ on the ring is also a regular sequence on the module.  Let $u,v$ be a regular sequence on $A^G$. Since $A^G(M)$ is torsion free, $u$  acts regularly on $A^G(M)$. Let $\omega_1,\omega_2\in A^G(M)$ such that $v\omega_1=u\omega_2$.
Since $A$ is a reflexive $A^G$-module, cf.~\cite[VII \S 4.8 Corollaire]{Bourbaki},  $u,v$ is a regular sequence on $A\otimes_kM$. So, there is an $\omega\in A\otimes_kM$ such that $\omega_1=u\omega$. If $\omega$ is not in $A^G(M)$, then there exists a $\sigma\in G$ such that $\omega\neq \sigma(\omega)$. But   $\omega_1=\sigma(\omega_1)$
and $u=\sigma(u)$, so  $u(\omega-\sigma(\omega))=0$.  Since $u$  acts regularly on
$A\otimes_kM$, we get $\omega=\sigma(\omega)$, which is a contradiction. So $\omega\in A^G(M)$ and $u,v$ is a regular sequence on $A^G(M)$. We conclude that $A^G(M)$ is reflexive.
\end{proof}

\subsection{The Cohen-Macaulay property in the non-modular situation}
Since modules of covariants are reflexive, every regular sequence of length
two on $A^G$ is also a regular sequence on any non-zero module of covariants. One might ask
whether the same holds for longer regular sequences. In the non-modular situation  it is known that if $A$ is Cohen-Macaulay then $A^G$ is
also  Cohen-Macaulay, by Hochster-Eagon~\cite{HE}. More generally, in that context, all modules of covariants 
are Cohen-Macaulay.  In the non-modular situation it is also known that if $A$ is free over $A^G$, then all modules of covariants are free. Both results are no longer true in the modular situation, we give counter examples later on.

\begin{proposition}
Let $A$ be a graded algebra without zero-divisors, $G$ a finite group of automorphisms on $A$,  
$H<G$ a subgroup, and $M$ a finite dimensional $kG$-module. Suppose that $G/H$ is non-modular.

(i) Suppose $A^H(M)$ is a Cohen-Macaulay $A^H$-module. 
Then $A^G(M)$  is a Cohen-Macaulay $A^G$-module.
In particular, if  $A^H$ is Cohen-Macaulay, then 
$A^G$ is also Cohen-Macaulay. 
If, in addition, $H$ acts trivially on $M$, then 
$A^G(M)$ is a Cohen-Macaulay $A^G$-module.

(ii)
Suppose  $A^G\subset A^H$ is a free graded extension,
and $A^H(M)$ free over $A^H$. Then $A^G(M)$ is free over $A^G$.
In particular, if $H$ acts trivially on $M$, then $A^G(M)$ is free.
\end{proposition}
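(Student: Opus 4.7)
The plan is to construct an explicit $A^G$-linear splitting of the natural inclusion $A^G(M)\hookrightarrow A^H(M)$ using the hypothesis that $|G/H|$ is invertible in $k$. Define the relative Reynolds operator
$$\pi:A^H(M)\to A^G(M),\qquad \omega\mapsto \frac{1}{|G/H|}\sum_{gH\in G/H}g(\omega),$$
where $g$ acts diagonally on $A\otimes_kM$. The sum is independent of the chosen coset representatives because $\omega$ is $H$-invariant; it lands in $A^G(M)$ because left multiplication by any $x\in G$ permutes the left cosets $gH$; and it is $A^G$-linear because $G$ acts by $A^G$-linear automorphisms on $A\otimes_kM$. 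Since $\pi$ restricts to the identity on the $G$-invariant submodule $A^G(M)$, the inclusion is split and we obtain a decomposition
$$A^H(M)=A^G(M)\oplus\Ker\pi$$
of graded $A^G$-modules.

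For part (i), recall that $A$ is a finitely generated $A^G$-module, hence its $A^G$-submodule $A^H$ is also finitely generated over $A^G$, so $A^G\subset A^H$ is a finite graded extension. Any homogeneous system of parameters $\theta_1,\ldots,\theta_d$ of $A^G$ is therefore also a homogeneous system of parameters of $A^H$. If $A^H(M)$ is Cohen--Macaulay as $A^H$-module, then this sequence is $A^H(M)$-regular, and a regular sequence on a direct sum restricts to a regular sequence on each summand, so it is also $A^G(M)$-regular. Hence the depth of $A^G(M)$ over $A^G$ is at least $d=\dim A^G(M)$, and $A^G(M)$ is Cohen--Macaulay. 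Specializing to $M=k$ yields the statement about the algebras; if in addition $H$ acts trivially on $M$ then $A^H(M)=A^H\otimes_kM$ is free over $A^H$ and hence Cohen--Macaulay whenever $A^H$ is, to which the main statement applies.

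For part (ii), a free $A^H$-module basis for $A^H(M)$ combined with a free $A^G$-module basis for $A^H$ yields a free $A^G$-module basis for $A^H(M)$. Its direct summand $A^G(M)$ is therefore a finitely generated projective graded $A^G$-module; since $A^G$ is a connected graded $k$-algebra, Nakayama's lemma (as noted in the preliminaries) implies that such modules are free. The ``in particular'' assertion then follows by observing that $A^H(M)=A^H\otimes_kM$ is automatically free over $A^H$ when $H$ acts trivially. The one nontrivial verification in the whole argument is that the Reynolds operator $\pi$ is well defined and $A^G$-linear without assuming $H$ is normal in $G$; once this is granted the rest is formal commutative algebra.
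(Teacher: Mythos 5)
Your proof is correct and follows the same overall strategy as the paper: the splitting of $A^G(M)\hookrightarrow A^H(M)$ by the relative Reynolds operator $\tfrac{1}{|G/H|}\Tr^G_H$, so that $A^G(M)$ is an $A^G$-direct summand of $A^H(M)$, is exactly the paper's device, and your part~(ii) is the same argument. The only divergence is in part~(i): the paper passes to the polynomial subalgebra $R=k[f_1,\ldots,f_n]$ generated by a homogeneous system of parameters, invokes the Auslander--Buchsbaum formula plus graded Nakayama to conclude $A^H(M)$ is $R$-free, and then uses that a direct summand of a free graded $R$-module is free; you instead argue directly that an $A^H(M)$-regular sequence restricts to an $A^G(M)$-regular sequence on the direct summand, which is elementary (quotients commute with direct sums, and graded Nakayama gives $(\theta)P\neq P$ for the summand $P\neq 0$). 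Your route is slightly more economical since it avoids Auslander--Buchsbaum; both arguments do need the small point, which you supply implicitly, that $A^G(M)$ and $A^H(M)$ are torsion-free of the full Krull dimension so the chosen parameters really are systems of parameters for them, and that the summand is nonzero when $M\neq 0$.
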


\begin{proof}
Since $G/H$ is non-modular, the operator 
$$\frac{1}{|G/H|}\Tr^G_H: (A\otimes_k M)^H\to (A\otimes_kM)^G:
\omega\mapsto \frac{1}{|G/H|}\sum_{gH\in G/H} g\omega$$
is $A^G$-linear and the identity when restricted to the submodule $(A\otimes_k M)^G$.
So $A^G(M)$ is an $A^G$-direct summand of $A^H(M)$. 

(i) Suppose $A^H(M)$ is a Cohen-Macaulay $A^H$-module. Let $f_1,\ldots,f_n$ be a homogeneous system of parameters for $A^G$. We have to show that $f_1,\ldots,f_n$ is a regular sequence
on $A^G(M)$. Let
$R=k[f_1,\ldots,f_n]$ be the polynomial algebra it generates. Then  $A^H(M)$ is also
a Cohen-Macaulay $R$-module. By the Auslander-Buchsbaum equation, it
follows that $A^H(M)$ is a projective graded $R$-module, hence free, by Nakayama's lemma for connected graded algebras, see~\cite{Eisenbud}.
Since a direct summand of a free graded
$R$-module is also a free graded $R$-module, it follows that $A^G(M)$ is a free graded
$R$-module. In particular, $f_1,\ldots,f_n$ forms a regular sequence on $A^G(M)$, and so 
$A^G(M)$ is a Cohen-Macaulay $A^G$-module.

(ii) If $A^H$ is free over $A^G$,  then the direct summand $A^G(M)$ of the free graded $A^G$-module $A^H(M)$ is also free.
\end{proof}

\subsection{Linear actions with large fixed point spaces}
In the linear situation, we can give some additional general results.
If $G$ acts linearly on $V$ with fixed points space $V^G$ of codimension one,
then $k[V]^G$ is a polynomial ring and all its modules of covariants are free. A similar result holds
if $\codim_VV^G=2$ and $k[V]^G$ is a polynomial algebra. The algebraic-geometric proof of the result depends in an essential way
on the linearity of the action and is based on an idea going back to Nakajima~\cite{Nakajima1983}. 

\begin{proposition}\label{codimensionone}
Suppose $G$ acts linearly on $V$, and let $M$ be a finite dimensional $kG$-module.

(i) Suppose $\codim_VV^G=1$. Then
$k[V]^G$ is a polynomial algebra and  $(k[V]\otimes M)^G$ is free as a graded module over $k[V]^G$.

(ii) Suppose $\codim_VV^G=2$. Then $(k[V]\otimes M)^G$ is a 
Cohen-Macaulay graded module over $k[V]^G$. In particular, $k[V]^G$ is a Cohen-Macaulay algebra.
Furthermore, if $k[V]^G$ is a polynomial algebra, then every module of covariants 
 is free.
 \end{proposition}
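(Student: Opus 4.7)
For part (i), with $\codim_V V^G = 1$: every non-identity $\sigma \in G$ satisfies $V^G \subseteq V^\sigma \subsetneq V$, so $V^\sigma = V^G$ and $\sigma$ is a reflection; hence $G = W$. The plan is first to establish that $k[V]^G$ is a polynomial algebra, following Nakajima~\cite{Nakajima1983}. By Schur--Zassenhaus, write $G = K \rtimes H$, with $K$ the (normal) transvection subgroup, an elementary abelian $p$-group, and $H$ a cyclic complement of order coprime to $p$. Choose coordinates $y, x_1, \ldots, x_{n-1}$ with $y$ defining $V^G$. Each $\sigma \in K$ acts by $y \mapsto y$ and $x_i \mapsto x_i + b_i(\sigma) y$; in the localization $k[V][y^{-1}]$, the substitution $u_i = x_i / y$ converts the $K$-action into translation by an $\mathbb{F}_p$-subspace, whose invariants form a polynomial ring. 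Pulling back yields explicit Artin--Schreier-type generators of the form $x_s^p - y^{p-1} x_s$ for $k[V]^K$. The subsequent cyclic $H$-invariants of the polynomial ring $k[V]^K$ are polynomial since $H$ has order coprime to $p$. For freeness of $(k[V]\otimes_k M)^G$: with $k[V]^G$ polynomial and $G = W$, the Hartmann--Shepler construction~\cite{HartmannS} produces an explicit free basis of $(k[V]\otimes_k M)^G$ of rank $\dim_k M$.

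For part (ii), with $\codim_V V^G = 2$: I would first establish Cohen--Macaulayness of $(k[V] \otimes_k M)^G$, after which the freeness statement (when $k[V]^G$ is polynomial) follows from the graded Auslander--Buchsbaum theorem, since a finitely generated Cohen--Macaulay graded module over a polynomial algebra has projective dimension zero and is hence free. Proposition~\ref{reflexivecovariants} gives reflexivity, and so Serre's $(S_2)$ condition on $(k[V] \otimes_k M)^G$. The key algebraic-geometric input is a local-to-global argument: at any prime $\gotp \subset k[V]^G$ not lying in the image $\pi(V^G) \subset \operatorname{Spec} k[V]^G$ of the quotient map $\pi \colon V \to \operatorname{Spec} k[V]^G$, every inertia subgroup $G_i(\gotP)$ for $\gotP$ over $\gotp$ has fixed space of codimension at most one in $V$, so part (i) applies and shows that $(k[V] \otimes_k M)^G$ is free in a neighborhood of $\gotp$. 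Since $\pi(V^G)$ has codimension two, $(k[V] \otimes_k M)^G$ is locally free off a codimension-$2$ closed subset. One then combines $(S_2)$ with this codimension-$2$ local freeness, exploiting the linear normal structure of $V$ along $V^G$ to reduce the question locally near $V^G$ to a problem on invariants of the two-dimensional $G$-module $V/V^G$, where Cohen--Macaulayness is automatic (two-dimensional normal graded rings are Cohen--Macaulay).

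The main obstacle I anticipate is this last step of (ii): bootstrapping from $(S_2)$ and local freeness outside a codimension-$2$ locus to full Cohen--Macaulayness. Unlike the non-modular setting, one cannot simply use a Reynolds operator to realize $k[V]^G$ as a direct summand of $k[V]$; instead, the linearity of the $G$-action is used in an essential way via Nakajima's idea, reducing the non-CM locus (a priori of codimension at most $3$ after $(S_2)$) to a two-dimensional transverse problem near $V^G$. This reduction does not survive in the generality of arbitrary graded algebras, which is why the proposition is specifically a linear-action statement and why \emph{both} conclusions of (ii) require the geometric structure of $V$ as a linear representation.
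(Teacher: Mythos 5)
Your part~(i) takes a genuinely different route from the paper: you decompose $G = K\rtimes H$ via Schur--Zassenhaus ($K$ the transvection $p$-group, $H$ cyclic coprime to $p$), build explicit Artin--Schreier-type generators for $k[V]^K$ following Nakajima~\cite{Nakajima1983}, and then invoke Hartmann--Shepler~\cite{HartmannS} for freeness of the covariants. The paper instead runs a single geometric argument: the translation action of $U := V^G$ on $V/G$ commutes with $\pi$, so the singular locus (resp.\ the non-free locus of the covariant module, via Fitting ideals) is $U$-stable and closed, hence either empty or of codimension at most $1$; normality (resp.\ torsion-freeness) forbids a codimension-$1$ bad locus, so both loci are empty. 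Your route works but is more computational and your specific formula $x_s^p - y^{p-1}x_s$ only covers the case where $K$ is a single copy of $\F_p$; for larger transvection groups one needs general additive (Dickson-type) polynomials, which is precisely the content of Nakajima's theorem. Also note that the paper mentions Hartmann--Shepler only as an \emph{alternative} proof of the freeness in (i) — it is independent of the argument actually given — so citing it as the route is fine but detours around the unified $U$-translation method.

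For part~(ii), you have correctly diagnosed the obstacle but not resolved it, and the proposed resolution does not close the gap. You establish (via part~(i) applied to stabilizers) that $(k[V]\otimes M)^G$ is locally free outside the codimension-$2$ set $\pi(V^G)$, and reflexivity gives $(S_2)$. But $(S_2)$ plus local freeness off a codimension-$2$ locus does \emph{not} imply Cohen--Macaulay; in general there is no such ascent, and you say so yourself. Your proposed fix — ``reduce locally near $V^G$ to invariants of the two-dimensional module $V/V^G$'' — is not carried out, and as stated it conflates the target module of covariants with the invariant ring (the clause ``two-dimensional normal graded rings are Cohen--Macaulay'' proves CM-ness of $k[V/V^G]^G$, not of a module over it). The paper's actual mechanism is the $U$-translation argument once more, in a subtler incarnation: if the module is not CM, its non-CM locus is closed, $U$-stable, and contains $\pi(0)$ (the graded maximal ideal), hence by $U$-stability contains all of $\pi(V^G)$ and in particular its \emph{generic point} $\gotp = \gotP\cap k[V]^G$, where $\gotP$ is the height-$2$ linear ideal of $V^G$. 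But at $\gotp$ the local ring $B := (k[V]^G)_\gotp$ is a $2$-dimensional normal local domain (hence CM), and the localized covariant module is reflexive over $B$, so by Lemma~\ref{classic} every length-$2$ $B$-regular sequence is regular on it — meaning it \emph{is} CM at $\gotp$. This contradiction is what kills the non-CM locus. The $U$-action is essential to propagate not-CM-ness from $\pi(0)$ out to the height-$2$ generic point where the reflexivity trick can bite; without it, the argument does not close. The freeness conclusion via Auslander--Buchsbaum when $k[V]^G$ is polynomial is correct as you state it.
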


\begin{proof}
Let $R$ be a graded algebra with maximal ideal $R_+$ and consider $k=R/R_+$ as $R$-module.
Then a finitely generated graded $R$-module $M$ is free over $R$ {\em if and only if}
$\Tor^R_i(M,k)=0$, for $i\geq 1$. Let $k\subset K$ be a field extension, put
$R_K=R\otimes_kK$ and $M_K=M\otimes_kK$. Then 
$\Tor^R_i(M,k)\otimes_kK\simeq\Tor^{R_K}_i(M_K,K)$. So $M$ is free over $R$ {\em if and only if}
$M_K$ is free over $R_K$.
Let $f_1,\ldots,f_n$ be a homogeneous system of parameters of $k[V]^{G}$. Then it is also a 
homogeneous system of parameters for $K[V]^{G}$. Put $R=k[f_1,\ldots,f_n]$.
Then $k[V]^{G}(M)$ is a Cohen-Macaulay graded $k[V]^{G}$-module {\em if and only if}
it is free over $R$. Finally $k[V]^G$ is polynomial {\em if and only if} $k[V]$ is a free $k[V]^G$-module.
From these remarks it follows that we can, without loss of generality, suppose that $k$ is algebraically closed.  

The orbit space
$V/G$ is an affine algebraic variety with coordinate ring $k[V]^G$. The morphism of algebraic varieties  $V\to V/G$ associating
$v$ to its orbit $Gv$ corresponds to the inclusion $k[V]^G\subset k[V]$.
The linear algebraic group $U:=V^G$ acts on
$V$ by translation, commuting with the $G$-action, so $U$ also acts on the orbit space $V/G$ with orbits of dimension $\dim V^G$. 

(i) The singular locus of $V/G$ is closed, see~\cite[Cor. 16.20]{Eisenbud}, and $U$-stable. So, if it is non-empty,
its dimension is at least $\dim V^G=\dim V -1$. But, since $V/G$ is normal, its singular locus is
of codimension at least two, see~\cite[Th. 11.5]{Eisenbud}. This gives a contradiction. So $V/G$ is non-singular. Therefore $k[V]^G$ is
a regular graded algebra, hence a polynomial algebra.
Analogously, the non-free locus of $(k[V]\otimes M)^G$ is a closed set, defined by a suitable Fitting
ideal of a finite presentation, see~\cite[Prop. 20.8]{Eisenbud} or \cite[Theorem 4.10]{Matsumura}, and stable under the action of $U$.
So, if it is not empty, its codimension is one. But, since it is torsion free and every finitely generated torsion free module
over a discrete valuation ring is free, it follows that $(k[V]\otimes M)^G$ is free in codimension one.
A contradiction. So freeness follows.

(ii)
Under the hypothesis of (ii), the algebraic group $U$ acts on $V/G$  with orbits of codimension two.
Suppose $(k[V]\otimes M)^G$ is not Cohen-Macaulay. Then its non-Cohen-Macaulay locus in $V/G$ is non-empty, $U$-stable, closed (see~\cite[Cor. 6.11.3]{EGA}),
and contains $\pi(0)$; so contains the whole of $\pi(V^G)$.
Let $\gotP$ be the linear ideal defining $V^G$; it is a prime ideal of height $2$. We conclude that $(k[V]\otimes M)^G$ is not Cohen-Macaulay at $\gotp:=\gotP\cap k[V]^G$.

Write $B$ for the localisation of $k[V]^G$ at $\gotp$ and $N$ for the
localisation of $k[V]^G(M)$ at $\gotp$. Then $B$ is a normal local ring of dimension two, hence
Cohen-Macaulay and $N$ is a reflexive $B$-module. And so every regular sequence
(of length two) is also a regular sequence on $N$, by Lemma~\ref{classic} (this lemma remains
true in the case where $B$ is only a noetherian integrally closed domain). We conclude that
$N$ is Cohen-Macaulay. But this is a contradiction; so $(k[V]\otimes M)^G$ is Cohen-Macaulay.
We conclude by remarking that every graded Cohen-Macaulay module for a polynomial algebra is free, by Auslander-Buchsbaum's formula, cf.~\cite[Thm. 19.9]{Eisenbud}.
\end{proof}

\begin{remark}
The freeness in (i) was proved differently
by Hartmann and Shepler~\cite{HartmannS}.
\end{remark}

A similar proof shows that the freeness (or Cohen-Macaulay) property   of modules of covariants for $G$ descends to
modules of covariants of point-stabilizers.

\begin{proposition}\label{pointstabilizer}
Suppose $G$ acts linearly on $V$. Let $U\subset V$ be a linear subspace with point-stabilizer $G_U$, and
let $M$ be a finitely generated $kG$-module.

(i) If $k[V]^G(M)$ is free over $k[V]^G$, then
$k[V]^{G_U}(M)$ is free over $k[V]^{G_U}$.

(ii) If $k[V]^G(M)$ is Cohen-Macaulay over $k[V]^G$, then
$k[V]^{G_U}(M)$ is Cohen-Macaulay over $k[V]^{G_U}$.
\end{proposition}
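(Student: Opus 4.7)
The plan is to mimic, for the point-stabilizer $G_U$, the geometric strategy used in Proposition~\ref{codimensionone}: study the non-free (respectively, non-Cohen-Macaulay) locus $Z \subset V/G_U$ of the graded $k[V]^{G_U}$-module $k[V]^{G_U}(M)$, constrain it by commuting group actions, and then invoke the hypothesis on $k[V]^G(M)$ to force $Z = \emptyset$.

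First, $Z$ is Zariski closed (cut out by Fitting ideals of a finite presentation) and carries two commuting actions that stabilize it. The scaling $k^\times$-action stabilizes $Z$ because the module is graded. Moreover, $U$ acts on $V$ by translation, and since $G_U$ fixes $U$ pointwise this action commutes with $G_U$, so $U$ descends to an action on $V/G_U$ that preserves $Z$. Hence, if $Z$ were non-empty, then by $k^\times$-stability $[0]\in Z$, and by $U$-stability $Z$ would contain the entire $U$-orbit of $[0]$, namely the closed subvariety $\pi_{G_U}(U)\subset V/G_U$, which is a copy of $U$ (since $G_U$ fixes $U$ pointwise).

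To reach a contradiction I localize at the generic point $\eta$ of $U\subset V$; by the definition of $G_U$ the point-stabilizer $G_\eta$ is exactly $G_U$. By the standard local structure of quotients by finite groups, there are natural isomorphisms of henselizations
$$
\mathcal{O}^h_{V/G_U,\,\pi_{G_U}(\eta)} \;\simeq\; (\mathcal{O}^h_{V,\eta})^{G_U} \;=\; (\mathcal{O}^h_{V,\eta})^{G_\eta} \;\simeq\; \mathcal{O}^h_{V/G,\,\pi_G(\eta)},
$$
which, after tensoring with $M$ and taking $G_\eta$-invariants, extend to an identification of the henselizations of $k[V]^G(M)$ at $\pi_G(\eta)$ and of $k[V]^{G_U}(M)$ at $\pi_{G_U}(\eta)$ as the same module $(\mathcal{O}^h_{V,\eta}\otimes_k M)^{G_U}$ over the same henselian local ring. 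Since henselization is faithfully flat, freeness (resp.\ Cohen-Macaulayness) of $k[V]^G(M)$ at $\pi_G(\eta)$, which holds by hypothesis, transfers through this identification to the same property for $k[V]^{G_U}(M)$ at $\pi_{G_U}(\eta)$. But $\pi_{G_U}(\eta)$ is the generic point of $\pi_{G_U}(U)\subseteq Z$, contradicting the definition of $Z$. Therefore $Z=\emptyset$, which for a finitely generated graded module over a connected graded ring is equivalent to global freeness (resp.\ Cohen-Macaulayness) of $k[V]^{G_U}(M)$.

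The main obstacle is justifying the orbit-stabilizer identification above. Because $G_U$ need not be normal in $G$, the finite morphism $V/G_U\to V/G$ is in general ramified at other points of the fiber over $\pi_G(\eta)$, namely at $\pi_{G_U}(g\eta)$ for $g\notin N_G(G_U)$. The key fact is that at the specific point $\pi_{G_U}(\eta)$ itself the $G_U$-orbit in the fiber is a singleton, so the map is unramified there; this is what makes the henselian identification clean enough to let the module-theoretic property descend.
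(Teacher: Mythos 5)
Your geometric skeleton matches the paper's: use the commuting translation action of $U$ to move a hypothetical ``bad'' point onto (the image of) $U$, then make a local comparison between $V/G$ and $V/G_U$ at a point lying over $U$, and descend freeness (or Cohen--Macaulayness) through that comparison. Where the paper uses completion at a closed point, you use henselization at the generic point; either flat base change would in principle do the job. But there is a genuine gap in the local comparison step.

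You work at the generic point $\eta$ of $U$ and assert that ``the point-stabilizer $G_\eta$ is exactly $G_U$.'' For a scheme-theoretic point such as $\eta$, the stabilizer that governs the local structure of the quotient is the \emph{decomposition group} $G_d(\gotP_\eta)=\{\sigma\in G:\sigma(U)=U\}$, which is the \emph{setwise} stabilizer of $U$. This can be strictly larger than the pointwise stabilizer $G_U$, which equals the inertia group $G_i(\gotP_\eta)$. (Example: $G=\langle(12)\rangle$ acting on $k^2$ by swapping coordinates, $U$ the anti-diagonal; then $G_U=1$ but the setwise stabilizer is all of $G$.) When $G_d(\gotP_\eta)\supsetneq G_U$, your middle isomorphism is wrong: the local structure theorem gives $\mathcal{O}^h_{V/G,\pi_G(\eta)}\simeq(\mathcal{O}^h_{V,\eta})^{G_d(\gotP_\eta)}$, not $(\mathcal{O}^h_{V,\eta})^{G_U}$. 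These differ already at the level of residue fields, so the two completed modules of covariants are not ``the same module over the same henselian local ring''; they are related by a nontrivial Galois base change $R\subset S$ with Galois group $G_d/G_i$. Your closing paragraph about ``the main obstacle'' also aims at the wrong target: the trouble is not ramification at other fiber points $\pi_{G_U}(g\eta)$, it is the $G_d$ versus $G_i$ mismatch at $\eta$ itself.

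The paper avoids this in two compatible ways. It chooses a \emph{closed} point $v\in U$ with $G_v=G_U$ (possible after passing to $\bar k$, which is a harmless base change for freeness and CM); over an algebraically closed field a closed point has decomposition group equal to inertia group, so the clean local identification you want actually holds there. Moreover, Lemma~\ref{hulp} is deliberately two-step: it first passes from $G$ to the decomposition group $G_d$ by the projection/idempotent argument, and then from $G_d$ to $G_i$ using the isomorphism $S\otimes_R(A_\gotP\otimes_kM)^{G_d}\simeq(A_\gotP\otimes_kM)^{G_i}$ together with the freeness of $S$ over $R$; the second step is exactly what your argument is missing when $G_d\neq G_i$. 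Your proof can be repaired either by choosing a closed point as the paper does, or by inserting this Galois-descent step explicitly rather than claiming the local rings coincide.
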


\begin{proof}
As in the proof of Proposition~\ref{codimensionone}, we can assume that $k$ is algebraically closed.
The linear  algebraic group $U$ acts on $V$
by translations, commuting with the $G_U$-action. Let $\gotP$ be the prime ideal generated
by the linear forms vanishing on $U$, and put
$\gotp=\gotP\cap k[V]^{G_U}.$
Let $v\in U$ such that $G_v=G_U$ (we can find such a $v$ since $k$ is algebraically closed),
and let $\gotM_v\supset \gotP$ be the corresponding maximal ideal.  Then $G_U$ coincides with the decomposition subgroup of $\gotM_v$. Put 
$\gotm_v:=\gotM_v\cap k[V]^{G_U};$ 
it is a maximal ideal containing $\gotp$. There exists
a translation in $U$ that moves the maximal graded ideal of $k[V]^{G_U}$ onto $\gotm_v$.

Suppose $k[V]^{G_U}(M)$ is not free (or Cohen-Macaulay), then it is not free (or Cohen-Macaulay)
at the maximal graded ideal and (using the commuting translation action) 
therefore not free (or Cohen-Macaulay) at the maximal ideal $\gotm_v$ either. This gives a contradiction
with Lemma~\ref{hulp} in the last section.
\end{proof}

\section{Generalizations of results of Stanley and Nakajima}
\subsection{Free modules of covariants}
Stanley~\cite{Stanley} and Nakajima~\cite{Na} already extensively studied modules of
semi-invariants. One of their fundamental results can be described as follows.
Let $\lambda:G\to k^\times$ be a linear character with module of semi-invariants $A^G_\lambda$.
Then if $A^G_\lambda$ is free,  $A^W_\lambda$ is also free
and both modules of semi-invariants share the generator. 
For modules of covariants, this is generalized as follows.

\begin{theorem}\label{tensor}
Let $A$ be a normal graded algebra,  $G$ a finite group of automorphisms of
$A$, and $M$ a finite dimensional 
$kG$-module. Let
$K$ be a subgroup such that $W\leq K\leq G$.
Suppose that $A^G(M)$ is free over $A^G$. Then multiplication induces an isomorphism of graded $A^K$-modules
$$\mu:A^K\otimes_{A^G}A^G(M)\simeq A^K(M).$$
In particular, $A^K(M)$ is free over $A^K$, and $A^G(M)$ and $A^K(M)$ share bases.
\end{theorem}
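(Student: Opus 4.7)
The plan is to show that the natural multiplication map
$$\mu : A^K \otimes_{A^G} A^G(M) \to A^K(M), \quad a \otimes \omega \mapsto a\omega$$
is a pseudo-isomorphism of reflexive graded $A^K$-modules, and then invoke Lemma~\ref{reflexive}(i) to upgrade it to an isomorphism. Both sides are reflexive: the right-hand side by Proposition~\ref{reflexivecovariants}, and the left-hand side because the hypothesis $A^G(M) \simeq \bigoplus_{i=1}^m A^G[-e_i]$ (with $m = \dim_k M$ and $e_1,\ldots,e_m$ the degrees of a homogeneous free basis) gives $A^K \otimes_{A^G} A^G(M) \simeq \bigoplus_{i=1}^m A^K[-e_i]$, which is free.

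Next I would verify injectivity of $\mu$. Since the left-hand side is torsion-free, it suffices to check injectivity at the generic point of $A^K$. Letting $L$ denote the quotient field of $A$, this reduces to showing that the natural map
$$L^K \otimes_{L^G} (L \otimes_k M)^G \longrightarrow (L \otimes_k M)^K$$
is an isomorphism. Both sides are $L^K$-vector spaces of dimension $m$ (by Galois descent applied to the $G$-action and the $K$-action on $L \otimes_k M$), and after extending scalars from $L^K$ to $L$ the map becomes the identity on $L \otimes_k M$; faithful flatness of $L$ over $L^K$ then gives the claim.

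Finally I would compare numerical invariants. Both sides have rank $m$ over $A^K$, and the freeness computation for the left-hand side gives
$$s_{A^K}\bigl(A^K \otimes_{A^G} A^G(M)\bigr) = \sum_{i=1}^m e_i = s_{A^G}(A^G(M)) = s_{A^K}(A^K(M)),$$
where the last equality is Theorem~\ref{BCB}(i). Let $C$ denote the cokernel of $\mu$. Using additivity of rank and $s$-invariant along the short exact sequence $0 \to A^K \otimes_{A^G} A^G(M) \xrightarrow{\mu} A^K(M) \to C \to 0$, we find that $C$ has rank zero and $s$-invariant zero, hence is pseudo-zero. Thus $\mu$ is a pseudo-isomorphism between reflexive $A^K$-modules, and Lemma~\ref{reflexive}(i) concludes the proof; freeness of $A^K(M)$ and the shared basis assertion follow at once. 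The chief leverage is Theorem~\ref{BCB}(i); the only delicate verification is the Galois-descent argument for injectivity.
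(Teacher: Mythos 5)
Your proposal is correct, and its overall architecture matches the paper's: reflexivity of both sides (right-hand side by Proposition~\ref{reflexivecovariants}, left-hand side because it is free), matching rank and $s$-invariant via Theorem~\ref{BCB}(i), injectivity of $\mu$ deduced from generic injectivity and torsion-freeness, the cokernel pseudo-zero by additivity of $r$ and $s$ along the short exact sequence, and finally Lemma~\ref{reflexive}(i) to upgrade the pseudo-isomorphism to an isomorphism. The one step where you genuinely diverge from the paper is the verification that the generic map
$$L^K \otimes_{L^G} (L\otimes_k M)^G \longrightarrow (L\otimes_k M)^K$$
is an isomorphism. You argue via Galois descent for both $L/L^G$ and $L/L^K$: after the faithfully flat base change $-\otimes_{L^K}L$ the map becomes the identity on $L\otimes_k M$, hence was already an isomorphism. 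The paper instead proves injectivity directly: given an $L^K$-linear relation $\sum_i u_i\omega_i = 0$ among an $L^G$-basis $\omega_1,\ldots,\omega_m$ of $(L\otimes_k M)^G$, one applies $\Tr^G_K$ after multiplying by arbitrary $v\in L^K$, obtains $\Tr^G_K(vu_i)=0$ for all $v,i$, and concludes $u_i=0$ from the nondegeneracy of the trace form of the separable extension $L^G\subset L^K$, then invokes equality of dimensions to get surjectivity. Both routes are correct; yours is slicker if one is willing to cite Galois descent and faithful flatness as black boxes, while the paper's trace argument is more self-contained and makes the role of separability explicit (which is also the ingredient underlying Galois descent). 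No gaps.
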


\begin{proof}
By assumption, $A^K\otimes_{A^G}A^G(M)$ is free over $A^K$ of rank $\dim_kM$. So both sides are reflexive of the same rank. And since 
$$s_{A^K}(A^K\otimes_{A^G}A^G(M))=s_{A^G}(A^G(M))=s_{A^K}(A^K(M)),$$ by Theorem~\ref{BCB}, both sides also have the same $s$-invariant. Let $L$ be the quotient field of $A$.
To prove that $\mu$ generically is an isomorphism, we must show that the $L^K$-linear map
\begin{equation}\label{generic}
L^K\otimes_{L^G}(L\otimes_k M)^G\to (L\otimes_k M)^K
\end{equation}
is an isomorphism of $L^K$-vector spaces of dimension $m=\dim_kM$. It would follow that $\mu$ is a
pseudo-isomorphism, and then, by Lemma~\ref{reflexive}(i), that $\mu$ is an isomorphism.
To show that (\ref{generic}) is an isomorphism, it suffices to show  injectivity, or that
any $L^G$-basis of $(L\otimes_k M)^G$ is also an $L^K$-basis of $(L\otimes_k M)^K$.
Let $\omega_1,\ldots,\omega_m$ be an $L^G$-basis of $(L\otimes_k M)^G$.
Suppose we have an $L^K$-linear relation $\sum_i u_i \omega_i=0$, where $u_i\in L^K$.
Since the $\omega_i$'s are $G$-invariant, it follows for all $v\in L^K$ that 
$$\sum_i \Tr^G_K(v u_i) \omega_i=0,\
\hbox{ where}\ 
\Tr^G_K:=\sum_{\sigma K\in G/K}\sigma:L^K\to L^G.$$
Since the $\omega_i$ are independent over $L^G$, it follows that for all
$v\in L^K$ and $i$ we have 
$$\Tr^G_K(vu_i)=0.$$ 
But since the field extension $L^G\subset L^K$ is separable, the
map $\Tr^G_K:L^K\to L^G$ is non-zero, by~\cite[VI Theorem 5.2]{Lang}. So each $u_i$ is zero, and the $\omega_1,\ldots,\omega_m$ are also independent
over $L^K$, hence forms a basis. This finishes the proof.
\end{proof}

\subsection{Jacobian criterion for freeness}
Let $G$ act linearly on the vector space $V$, and let $\lambda:G\to k^\times$ be a linear character. Stanley~\cite{Stanley} and Nakajima~\cite{Na} show that $k[V]^W_\lambda$ is always free of rank one over $k[V]^W$, and they construct
a generator $f_\lambda$ of $k[V]^W_\lambda$. It is a product of linear forms; let $e_\lambda$
be  its degree. They prove that $k[V]^G_\lambda$ is free over $k[V]^G$ {\em if and only if} $k[V]^G_\lambda$ contains
a non-zero element of degree $e_\lambda$; in that case $f_\mu$ is the generator. 

This freeness criterion was generalized,
in some sense, to modules of covariants in characteristic zero  by Gutkin ~\cite{Gutkin}, \cite{OS}.
Let $M$ be an $m$-dimensional $kG$-module with a fixed basis $v_1,\ldots,v_m$.
For $m$ homogeneous elements
$$\omega_j=\sum_{i=1}^m a_{ij}\otimes v_i\in k[V]\otimes_k M,\ 1\leq j\leq m,$$
  define the {\em Jacobian determinant} by
$$\Jac_M(\omega_1,\ldots,\omega_m):=\det \left(a_{ij}\right)_{1\leq i,j\leq m}\in k[V].$$
We formulate the Jacobian criterion of freeness.  $k[V]^G(M)$ is free over $k[V]^G$ {\em if and only if} there is an $m$-tuple  $\omega_1,\ldots,\omega_m$ of homogeneous elements
in $k[V]^G(M)$ whose Jacobian determinant $\Jac_M(\omega_1,\ldots,\omega_m)$ is non-zero and of degree $e_M$,
where $e_M$
is the $s$-invariant $s_{k[V]^G}(k[V]^G(M))$.

Next, we associate to $M$ a certain product of linear forms $F_M$.
Let $U\subset V$ be a linear hyperplane  defined by the linear form $x_U$ and
let $G_U$ be its point-stabilizer. The module of covariants $k[V]^{G_U}(M)$ is
free, by Proposition~\ref{codimensionone}, with homogeneous generators $w_1,\ldots,w_m$, say. Put
$e_U(M)$ for the sum of their degrees. We claim that
$\Jac_M(w_1,\ldots,w_m)$ is equal to $x_U^{e_U(M)}$, up to a non-zero scalar in $k^\times$.
There are only finitely many $U$'s such that $G_U\neq 1$, so the following product of linear forms
is well-defined
$$F_M:=\prod_{U\subset V,\ \codim_VU=1}x_U^{e_U(M)}.$$
Its degree turns out to be
$e_M=\sum_U e_U(M)$.

Write $\lambda:G\to k^\times$ for the linear character associated to the $kG$-module $(\wedge^mM)^*$.
If $\omega_1,\ldots,\omega_m$ is an $m$-tuple of homogeneous elements of $k[V]^G(M)$, then
the Jacobian determinant $\Jac_M(\omega_1,\ldots,\omega_m)$ is a 
$\lambda$-semi-invariant.
Put $J_M^G$ for the $k[V]^G$-submodule of $k[V]^G_\lambda$ spanned by all such 
Jacobian determinants.
We'll show that any element of $J_M^G$ is divisible by $F_M$.

The following generalizes a result due to Gutkin in characteristic zero~\cite{Gutkin}, \cite{OS}.

\begin{theorem}\label{Jacobian}
Let $G$ act linearly on $V$, and let $M$ be a finite dimensional $kG$-module.

(i) For any linear subspace $U\subset V$ of codimension one we have
$$J_M^{G_U}=k[V]^{G_U}\cdot x_U^{e_U(M)}\subseteq k[V]^{G_U}_{\lambda_U},$$
where $\lambda_U=\lambda|_{G_U}$ and $e_U(M)$ equals the $s$-invariant
$s_{k[V]^{G_U}}(k[V]^{G_U}(M))$.

(ii) $F_M$ is a greatest common divisor of all the elements in $J_M^G$ and $$J_M^G\subseteq k[V]\cdot F_M.$$

(iii) {\em [Jacobian criterion]} Let $\omega_1,\ldots,\omega_m$ be an $m$-tuple of homogeneous elements of $k[V]^G(M)$. The following three
statements are equivalent.

(a) The module of covariants $k[V]^G(M)$ is free over $k[V]^G$ with basis $\omega_1,\ldots,\omega_m$;

(b) There is a non-zero scalar $c\in k^\times$ such that $$\Jac_M(\omega_1,\ldots,\omega_m)=cF_M;$$

(c) $\sum_{i=1}^m\deg \omega_i=s_{k[V]^G}(k[V]^G(M))$ and
$\Jac_M(\omega_1,\ldots,\omega_m)\neq 0.$
\end{theorem}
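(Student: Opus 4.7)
For (i), I would fix a homogeneous free basis $w_1,\ldots,w_m$ of $k[V]^{G_U}(M)$, which exists by Proposition~\ref{codimensionone}(i) since $V^{G_U}=U$ has codimension one. By multilinearity of the determinant, if $\omega_j=\sum_i c_{ij}w_i$ with $c_{ij}\in k[V]^{G_U}$, then $\Jac_M(\omega_1,\ldots,\omega_m)=\det(c_{ij})\cdot \Jac_M(w_1,\ldots,w_m)$, so $J_M^{G_U}=k[V]^{G_U}\cdot\Jac_M(w_1,\ldots,w_m)$. The Jacobian of the basis is a $\lambda_U$-semi-invariant of degree $e_U(M)=\sum_i\deg w_i$, and is nonzero by the normal basis argument used in the proof of Proposition~\ref{reflexivecovariants} (extending scalars, the $w_i$ remain a $k(V)$-basis of $k(V)\otimes M$). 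Since $G_U$ is its own reflection subgroup, Nakajima's theorem~\cite{Na} gives that $k[V]^{G_U}_{\lambda_U}$ is free of rank one, generated by a pure power $x_U^{d_U}$ (the only hyperplane with nontrivial $G_U$-stabilizer is $U$ itself). Hence $\Jac_M(w_1,\ldots,w_m)=g\,x_U^{d_U}$ for some $g\in k[V]^{G_U}$; to conclude $d_U=e_U(M)$ and $g\in k^\times$, I would compute the Jacobian on a Stanley--Nakajima adapted basis arising from a semidirect decomposition $G_U=T_U\rtimes C_U$ into the transvection subgroup and a cyclic complement, as illustrated by the explicit calculations in Example~\ref{secondexample}.

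For (ii), I apply (i) locally at each hyperplane: any $\omega_1,\ldots,\omega_m\in k[V]^G(M)\subset k[V]^{G_U}(M)$ gives $\Jac_M(\omega_1,\ldots,\omega_m)\in J_M^{G_U}\subset k[V]\cdot x_U^{e_U(M)}$. Since $k[V]$ is a UFD and the linear forms $x_U$ for distinct $U$ are pairwise non-associate primes, the divisibilities combine to $F_M\mid\Jac_M(\omega_1,\ldots,\omega_m)$, hence $J_M^G\subseteq k[V]\cdot F_M$. That $F_M$ is a greatest common divisor then follows by exhibiting (via the forward direction of (iii)) an $m$-tuple whose Jacobian is a nonzero scalar multiple of $F_M$.

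For (iii), Theorem~\ref{BCB} gives $\deg F_M=\sum_U e_U(M)=s_{k[V]^G}(k[V]^G(M))=:e_M$. Implication (a)$\Rightarrow$(b): the normal basis argument gives $\Jac_M\neq 0$, and freeness gives $\deg\Jac_M=\sum_i\deg\omega_i=e_M=\deg F_M$; combined with $F_M\mid\Jac_M$ from (ii) this forces $\Jac_M=cF_M$ with $c\in k^\times$. Implication (b)$\Rightarrow$(c) is immediate. For (c)$\Rightarrow$(a): $\Jac_M\neq 0$ forces linear independence of the $\omega_i$ over $k(V)^G$, so $N:=\sum_i k[V]^G\omega_i$ is free of rank $m$; additivity of Hilbert series for the short exact sequence $0\to N\to k[V]^G(M)\to Q\to 0$ gives $s_{k[V]^G}(Q)=e_M-\sum_i\deg\omega_i=0$. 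As $Q$ is torsion and codimension-one associated primes contribute strictly negatively to the $s$-invariant, $s(Q)=0$ forces $Q$ to be supported in codimension at least two. Then $N$ and $k[V]^G(M)$ are reflexive modules agreeing at every height-one prime of $k[V]^G$ (Proposition~\ref{reflexivecovariants} and Lemma~\ref{classic}), so $N=k[V]^G(M)$ by Lemma~\ref{reflexive}(i).

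The delicate step is the exact identification in (i): Nakajima's theorem only pins down $d_U$ modulo $h_U=|G_U/T_U|$, so verifying the exact equality $d_U=e_U(M)$ requires an explicit computation on an adapted basis, carefully combining the transvection and pseudoreflection contributions of $G_U$; the remaining parts (ii) and (iii) follow essentially formally once the local Jacobian formula of (i) is in hand.
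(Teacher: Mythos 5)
Your proposal departs from the paper's approach in a significant way: the paper deduces all three parts from the ramification machinery of Theorem~\ref{main} (idempotent decomposition of the $\gotp$-adic completion $\widehat{A}$, the identities $J^{G}_M\widehat{A_\gotP}^{G_d}=J^{G_d}_M\widehat{A_\gotP}^{G_d}$ and $J^{G_d}_MA_\gotP^{G_i}=J^{G_i}_MA_\gotP^{G_i}$, and the conclusion that any height-one prime containing $J^G_M$ has nontrivial inertia), whereas you try to reduce (i) to the Stanley--Nakajima rank-one result and deduce (ii) from (iii). Your argument for (iii)(c)$\Rightarrow$(a) is a genuinely different and clean route: you build the free submodule $N=\bigoplus_i k[V]^G\omega_i$, use additivity of the $s$-invariant to conclude the torsion cokernel $Q$ has $s(Q)=0$, hence is supported in codimension $\geq 2$, and then invoke reflexivity of $k[V]^G(M)$ (Lemma~\ref{reflexive}(i)) to force $N=k[V]^G(M)$. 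The paper instead does a direct linear-algebra argument with the equality $\Jac_M=cF_M$ to show every covariant lies in $N$. Your version is arguably more conceptual.

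However, there are two genuine gaps. In (i), you observe (and you partly acknowledge this yourself) that Nakajima's theorem only tells you $\Jac_M(w_1,\ldots,w_m)=g\cdot x_U^{d_U}$ with $g\in k[V]^{G_U}$, and you still must show $g$ is a unit. Your proposal to settle this by "an explicit computation on an adapted basis" is not the right move: a semidirect decomposition of $G_U$ is not available over arbitrary $k$ (Example~\ref{secondexample} is specific to $\F_p$), and the argument would not transparently survive change of coordinates. What is actually needed is the conceptual fact that the Jacobian of a free basis is a unit at every height-one prime $\gotP$ with trivial inertia in $G_U$ (equivalently, multiplication $A\otimes_{A^{G_U}}A^{G_U}(M)\to A\otimes M$ is an isomorphism there), which the paper derives from Theorem~\ref{main}; no explicit computation occurs in the paper's proof of (i). The second gap is in (ii): to show $F_M$ is actually the gcd (not merely a common divisor), you invoke the forward direction of (iii), i.e.\ exhibit a tuple with $\Jac_M=cF_M$ from a free basis of $k[V]^G(M)$. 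But $k[V]^G(M)$ need not be free, so no such tuple need exist, and the argument becomes circular. The paper instead bounds the $x_U$-valuation of the gcd of $J^G_M$ from above directly, by showing via the completion identities that locally at $\gotP$ the ideal $J^G_M$ agrees with $J^{G_i}_M=J^{G_U}_M$, whose valuation is exactly $e_U(M)$ by (i). Without some form of that ramification input, (ii) does not follow from (i) alone.
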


\begin{proof}
The proof is given in subsection~\ref{proofjacobian}.
\end{proof}

We get a quick proof of  a part of Theorem~\ref{tensor} in the linear situation, together with its converse. 
\begin{corollary}\label{converse}
Let $G$ act linearly on $V$, $M$ be a finite dimensional $kG$-module,
and $K$ be a subgroup such that $W\leq K\leq G$.

Then  $k[V]^G(M)$ is free over $k[V]^G$  if and only if
$k[V]^K(M)$ is free over $k[V]^K$ and $k[V]^G(M)$ and $k[V]^K(M)$ share generators.
\end{corollary}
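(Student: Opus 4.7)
The forward implication is immediate from Theorem~\ref{tensor}: if $k[V]^G(M)$ is free over $k[V]^G$ with basis $\omega_1,\ldots,\omega_m$, then the isomorphism $\mu:k[V]^K\otimes_{k[V]^G}k[V]^G(M)\simeq k[V]^K(M)$ shows that the same $\omega_i$ form a free basis of $k[V]^K(M)$ over $k[V]^K$.

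For the converse, suppose $\omega_1,\ldots,\omega_m\in k[V]^G(M)$ is a free $k[V]^K$-basis of $k[V]^K(M)$, and aim to verify the Jacobian criterion Theorem~\ref{Jacobian}(iii)(c) for $G$. The first key remark is that for every linear hyperplane $U\subset V$, the point stabilizer $G_U$ consists of the identity together with reflections on $k[V]$; since $W\leq K$, every such reflection lies in $K$, whence $G_U=G_U\cap K$. Thus the polynomial $F_M$ built from the hyperplane data of $G$ coincides with the corresponding polynomial built from $K$, and likewise $e_U(M)=s_{k[V]^{G_U}}(k[V]^{G_U}(M))=s_{k[V]^{K\cap G_U}}(k[V]^{K\cap G_U}(M))$ for each $U$.

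Apply Theorem~\ref{Jacobian}(iii) to the group $K$: since $\omega_1,\ldots,\omega_m$ freely generate $k[V]^K(M)$, there exists $c\in k^\times$ with $\Jac_M(\omega_1,\ldots,\omega_m)=cF_M$, and in particular $\Jac_M(\omega_1,\ldots,\omega_m)\neq 0$. Next, Theorem~\ref{BCB}(i) gives
$$s_{k[V]^G}(k[V]^G(M))=s_{k[V]^K}(k[V]^K(M))=\sum_{i=1}^m\deg(\omega_i),$$
the last equality because the $\omega_i$ are a free homogeneous $k[V]^K$-basis. Hence the $\omega_i$ satisfy condition (c) of the Jacobian criterion for $G$, so by Theorem~\ref{Jacobian}(iii) they form a free $k[V]^G$-basis of $k[V]^G(M)$.

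The only genuinely delicate point is the identification $G_U=G_U\cap K$, but this is forced by the fact that nontrivial elements of a point stabilizer of a hyperplane are reflections on $k[V]$; everything else is a direct concatenation of the Jacobian criterion with the invariance $s_{k[V]^G}(k[V]^G(M))=s_{k[V]^K}(k[V]^K(M))$ supplied by Theorem~\ref{BCB}(i).
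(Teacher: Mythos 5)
Your proof is correct. For the converse you run essentially the same argument as the paper: extract a homogeneous $k[V]^K$-free basis of $k[V]^K(M)$ lying inside $k[V]^G(M)$, observe that its $M$-Jacobian determinant is nonzero, and note that the sum of its degrees equals $s_{k[V]^K}(k[V]^K(M))=s_{k[V]^G}(k[V]^G(M))$ by Theorem~\ref{BCB}(i); condition~(c) of Theorem~\ref{Jacobian}(iii) then gives freeness over $k[V]^G$. Your extra digression on $F_M$ coinciding for $G$ and $K$ is correct (precisely because $G_U=K_U$ when $W\leq K$), but is not needed once you route through condition~(c) rather than~(b). Where you genuinely diverge from the paper is the forward direction: you invoke Theorem~\ref{tensor} directly, which immediately hands you the base change isomorphism and hence a shared free basis, whereas the paper deliberately re-derives this from the Jacobian criterion alone. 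The paper's choice is not accidental: the sentence just before the corollary says the point is to give ``a quick proof of a part of Theorem~\ref{tensor} in the linear situation,'' i.e., to show the Jacobian criterion recovers that result in the linear case. Your route is shorter and perfectly valid as a proof of the corollary, but it forgoes the self-contained flavor the paper is after. Either way, the two directions of the corollary are established; if you wish to mirror the paper exactly, replace your appeal to Theorem~\ref{tensor} in the forward direction by the observation that a free homogeneous $k[V]^G$-basis of $k[V]^G(M)$ has nonzero $M$-Jacobian of degree $s_{k[V]^G}(k[V]^G(M))=s_{k[V]^K}(k[V]^K(M))$, and apply condition~(c) for $K$.
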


\begin{proof}
Suppose $k[V]^G(M)$ is free over $k[V]^G$ with homogeneous basis $\omega_1,\ldots,\omega_m$. Then we get, by the Jacobian
criterion,  that
$\Jac_M(\omega_1,\ldots,\omega_m)$ is non-zero of degree $s_{k[V]^G}(k[V]^G(M))$.  These basis elements
are also elements of $k[V]^K(M)$ and we have the same $s$-invariants
$$s_{k[V]^G}(k[V]^G(M))=s_{k[V]^K}(k[V]^K(M)),$$ 
by Theorem~\ref{BCB}. By the Jacobian criterion, $k[V]^K(M)$ is free with homogeneous basis $\omega_1,\ldots,\omega_m$ and so $k[V]^K(M)$ and $k[V]^G(M)$ share generators.

Conversely, suppose $A^K(M)$ is free over $A^K$ and $A^G(M)$ and $A^K(M)$ share generators.
This means that $A^K(M)$ can be generated by $G$-invariant elements, and hence we can extract a
homogeneous basis $\omega_1,\ldots,\omega_m$ of $A^K(M)$ consisting of $G$-invariant elements.
From the Jacobian criterion it follows that 
$\Jac_M(\omega_1,\ldots,\omega_m)$ is non-zero of degree 
$$s_{k[V]^K}(k[V]^K(M))=s_{k[V]^G}(k[V]^G(M)).$$ 
So, by the Jacobian criterion once again, it follows
that the $\omega_1,\ldots,\omega_m$ form a basis of the free $k[V]^G$-module $k[V]^G(M)$.
\end{proof}

\begin{example} We continue example~\ref{firstexample}.
Let us prove that $k[V]^{H_2}(M_1)$ indeed has basis $1\otimes x_1$, $1\otimes x_2$, and
$x_2\otimes x_3+x_3\otimes x_2$, like we claimed before. First of all, the Jacobian determinant of this triple is
$x_2$, so $s(k[V]^{H_2}(M_1))\leq 1$. But $s(k[V]^{H_2}(M_1))=0$ is impossible, since
$H_2$ does not act trivially on $M_1$. So $s(k[V]^{H_2}(M_1))=1$, and by the Jacobian criterion
the three given covariants form a basis.

Next, we consider the three $G$-covariants of type $M_1$: $1\otimes x_1$, $1\otimes x_2$, and
$x_4(x_1+x_4)\otimes x_1+x_3(x_1+x_3)\otimes x_2+x_2(x_1+x_2)\otimes x_3$. Their Jacobian determinant is $x_2(x_1+x_2)$,  hence non-zero of degree $2=s_{k[V]^G}(k[V]^G(M_1))$. So, by
the Jacobian criterion,  they form a basis of the free module of covariants $k[V]^G(M_1)$ and
$F_{M_1}=x_2(x_1+x_2)$.

Similarly, $1\otimes x_1$, $1\otimes x_2$, $x_4\otimes x_1+x_3\otimes x_2+x_2\otimes x_3+x_1\otimes x_4$, and $x_4^2\otimes x_1+x_3^2\otimes x_2+x_2^2\otimes x_3+x_1^2\otimes x_4$ 
is a basis of the free module of covariants $k[V]^G(M_2)$, with
$F_{M_2}=x_1x_2(x_1+x_2)$.

We remark that $k[V]^G$ is not a polynomial ring, but is minimally generated by
$x_1$, $x_2$, $x_1x_4(x_1+x_4)+x_2x_3(x_2+x_3)$, $x_3^4+(x_1^2+x_1x_2+x_2^2)x_3^2+x_1x_2(x_1+x_2)x_3$
and $x_4^4+(x_1^2+x_1x_2+x_2^2)x_4^2+x_1x_2(x_1+x_2)x_4$.
\end{example}

\begin{example}
Let $G$ act linearly on the vector space $V$ of dimension $n$.
Fix a basis $x_1,\ldots,x_n$ of linear forms. For any invariant $f\in k[V]^G$, write $df$ for the covariant
$$df:=\sum_i\frac{\partial f}{\partial x_i}\otimes x_i\in (k[V]\otimes_kV^*)^G=k[V]^G(V^*).$$
Let  $f_1$, $f_2, \ldots,f_n$ be an $n$-tuple of $G$-invariants, then 
$$\Jac_{V^*}(df_1,\ldots,df_n)=\det\left(\frac{\partial f_i}{\partial x_j}\right)_{1\leq i,j \leq n}.$$
Write $e_{V^*}:=s_{k[V]^G}(k[V]^G(V^*))$. Then our Jacobian criterion says that $df_1$, $df_2$,
\ldots, $df_n$ (freely) generate $k[V]^G(V^*)$ if and only if 
$\sum_i \deg df_i=\sum_i(\deg f_i-1)=e_{V^*}$
and
$$\Jac_{V^*}(df_1,\ldots,df_n)\neq 0.$$

On the other hand the classical Jacobian criterion for $k[V]^G$ to be polynomial is as follows, 
see~\cite[Criterion 2]{Br7}.  $k[V]^G=k[f_1,\ldots,f_n]$  {\em if and only if }
$ \sum_i(\deg f_i-1)=\delta$
and  $\det\left(\frac{\partial f_i}{\partial x_j}\right)_{1\leq i,j \leq n}\neq 0$, where
$\delta$ is the differential degree.

We conclude that  if $\delta=e_{V^*}$,  then $k[V]^G=k[f_1,\ldots,f_n]$ (hence is a polynomial algebra) {\em if and only if}
$df_1$, $df_2$,\ldots, $df_n$ (freely) generate $k[V]^G(V^*)$.

By example~\ref{secondexample}, if the base field equals $\F_2$, we have the equality $\delta=e_{V^*}$. 

We claim that if $G$ has no transvections, $\delta=e_{V^*}$ also holds.
Write $\det$ for the linear character associated to the $kG$-module $\wedge^nV$. Let $U\subset V$
be a linear subspace of codimension one, defined by the linear form $x_U$. The point-stabilizer $G_U$ is cyclic, say with generator $\sigma$ of order $h_U$, since there are no transvections.
There is an eigenbasis of linear forms $x_1=x_U$, $x_2,\ldots,x_n$; the eigenvalue of $x_1$ is
a primitive $h_U$-th root of unity in $k$, the other $x_i$'s have eigenvalue one. Then $$k[V]^{G_U}=k[x_1^{h_U},x_2,\ldots,x_n]$$ 
and $k[V]^G(V^*)$ is freely generated by
$x_1^{h_U-1}\otimes x_1,1\otimes x_2,\ldots,1\otimes x_n$
with Jacobian determinant
$x_1^{h_U-1}$. But $x_1^{h_U-1}$ is also the generator of  the module of $\det$-semi-invariants.
We conclude  that $F_{V^*}$ equals the generator of $W$-semi-invariants of type $\det$, and so 
its degree $e_{V^*}$ equals $\delta$.
\end{example}

\begin{remark}
It is conjectured that if $k[V]^G$ is polynomial, at least the module of covariants
$(k[V]\otimes V^*)^G$ is free, see~\cite{HartmannS}. If there are no transvections in $G$,  this is true, by the example above, and was known before 
by work
of Knighten~\cite{Knighten} or Hartmann~\cite{Hartmann}.  Knighten proved that if there are no transvections, then
$k[V]^G(V^*)$ is isomorphic to $\Omega_{k[V]^G/k}^{**}$, the reflexive closure of
the module of differentials. If $k[V]^G$ is a polynomial algebra, then the module
of differentials $\Omega_{k[V]^G/k}$ is free, and hence isomorphic to the module
of covariants $k[V]^G(V^*)$; which is therefore free. The modules of covariants $k[V]^G(\wedge^i(V^*))$ are then also free;  the arguments  in characteristic zero given by  Shepler~\cite{Shepler}  carry over to the general  transvection free case.
\end{remark}

\section{Free extensions of algebras of invariants}\label{subgroupinvariants}
Let $H<G$ be a subgroup and let $G$ act as before on the graded algebra $A$. It is of interest
to study the ring of invariants $A^H$ as a module over its subring $A^G$. For example,
if $A^H$ is free over $A^G$,  both rings have a very close ring theoretic relationship, e.g.,
one of the two is Cohen-Macaulay {\em if and only if} the other is.
Fortunately,  $A^H$ is as an $A^G$-module isomorphic to a  module of covariants, in particular
even $A$ itself, so we can
apply the theorems we have obtained before.
We start by showing somewhat more generally that modules of covariants of subgroups are modules of covariants.

\begin{lemma}\label{alsocovariants}
Let $A$ be a graded algebra, $G$ a finite group of automorphisms of $A$, $H<G$ a subgroup, and
$M$ a finite dimensional $kH$-module.  There is
an isomorphism of $A^G$-modules:
$$(A\otimes_k M)^H\simeq (A\otimes_k\Ind_H^GM)^G.$$
In particular, the ring of invariants $A^H$ is as  $A^G$-module isomorphic to a module of covariants
$$A^H\simeq A^G(\Ind_H^G k).$$
Here we take $\Ind_H^GM$ to be $kG\otimes_{kH}M$, and in particular $\Ind_H^Gk$ is the
permutation $kG$-module on the left coset space $G/H$.
\end{lemma}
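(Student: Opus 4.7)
The plan is to exhibit mutually inverse, explicit, $A^G$-linear maps between the two modules, following the pattern of Frobenius reciprocity at the level of invariants. Fix a set of coset representatives $g_1=1, g_2,\ldots,g_r$ of $G/H$, so that every element of $\operatorname{Ind}_H^GM = kG\otimes_{kH}M$ has a unique expression $\sum_{i=1}^r g_i\otimes m_i$ with $m_i\in M$, and hence every element of $A\otimes_k\operatorname{Ind}_H^GM$ has a unique expression of the form $\sum_{i,j} a_{ij}\otimes(g_i\otimes m_j)$ once a $k$-basis $\{m_j\}$ of $M$ is fixed.

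First I would define the transfer-style map
\[
\Psi:(A\otimes_k M)^H\to (A\otimes_k\operatorname{Ind}_H^GM)^G,\qquad
\Psi\bigl(\textstyle\sum_j a_j\otimes m_j\bigr)=\sum_{i}\sum_{j} g_i(a_j)\otimes(g_i\otimes m_j),
\]
and check two things: (a) the sum is independent of the choice of coset representatives (this uses $H$-invariance of the input, since replacing $g_i$ by $g_ih$ with $h\in H$ rewrites $g_i(ha_j)\otimes(g_i\otimes hm_j) = g_i(a_j)\otimes(g_i\otimes m_j)$ after using $\sum_j ha_j\otimes hm_j=\sum_j a_j\otimes m_j$); and (b) the output is $G$-invariant, because for any $g\in G$ the set $\{gg_i\}$ is again a system of coset representatives and so independence from (a) gives $g\cdot\Psi(\omega)=\Psi(\omega)$.

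Next I would define the inverse $\Phi$ by extraction of the $g_1=1$ component:
\[
\Phi:(A\otimes_k\operatorname{Ind}_H^GM)^G\to(A\otimes_k M)^H,\qquad
\Phi\bigl(\textstyle\sum_{i,j}a_{ij}\otimes(g_i\otimes m_j)\bigr)=\sum_j a_{1j}\otimes m_j.
\]
To see the image is $H$-invariant, for $h\in H$ the element $hg_1=h$ lies in the coset $H$, so the $g_1$-component of $h\cdot\eta$ receives contributions only from the $g_1$-component of $\eta$, and equating these components gives the desired identity. The composition $\Phi\circ\Psi=\mathrm{id}$ is immediate (only $i=1$ survives). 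For $\Psi\circ\Phi=\mathrm{id}$, I would apply $g_i$-invariance of a general $\eta\in (A\otimes_k\operatorname{Ind}_H^GM)^G$ to compare the $g_i$-component of $g_i\cdot\eta$ with that of $\eta$; this forces $a_{ij}=g_i(a_{1j})$, which is exactly what is needed. $A^G$-linearity of both maps is transparent since $A^G$-scalars commute with the whole $G$-action.

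The main (really only) obstacle is bookkeeping: keeping track of when a tensor sign is over $k$, over $kH$, or over $kG$, and ensuring that the rewriting $g_i\otimes hm = g_ih\otimes m$ inside $\operatorname{Ind}_H^GM$ is being used correctly when checking independence of coset representatives and the two inverse identities. No deep tool is needed; once $\Psi$ and $\Phi$ are set up cleanly, everything is formal. The second assertion, $A^H\simeq A^G(\operatorname{Ind}_H^Gk)$, follows by specializing the isomorphism to the trivial one-dimensional $kH$-module $M=k$, since $(A\otimes_k k)^H=A^H$ and $\operatorname{Ind}_H^Gk$ is by definition the permutation module on $G/H$.
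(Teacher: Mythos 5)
Your proposal is correct and is essentially the paper's proof: you fix coset representatives $g_1=1,\ldots,g_r$, define the map via the relative transfer $\sum_i g_i$, and invert it by extracting the $g_1$-component, verifying the two compositions and the invariance properties by the same coset bookkeeping the paper uses. The only difference is presentational — you construct the inverse $\Phi$ explicitly, while the paper phrases the same computation as a direct check that $\psi$ is surjective (via $\omega=\Tr^G_H\omega_1$) and injective (via $\omega=0\iff\omega_1=0$).
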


\begin{proof}
This can be proved using Frobenius reciprocity of representation theory, or directly as follows.
Fix left coset representatives
$g_1,\ldots,g_s$ of $G/H$, where $g_1=1$, and a basis $v_1,\ldots,v_m$ of $M$. 
We will show that the $A^G$-linear map 
$$\psi:A^H(M)\to A^G(\Ind_H^GM): \sum_j a_j\otimes v_j\mapsto \Tr_H^G(\sum_j a_j\otimes g_1\otimes v_j),$$
is an isomorphism. Here $\Tr_H^G=\sum_i g_i$, $a_j\in A$ and $g_1=1$ is considered as
the unit in $kG$. 
Let $\omega\in (A\otimes_k\Ind_H^GM)^G$ be any element. It can be uniquely written as
$\omega=\sum_{i,j} a_{ij}\otimes g_i \otimes v_{j},$
where of course $a_{ij}\in A$, and $g_i\in G$ is seen as a basis element of $kG$ over $kH$.
We can write $\omega=\sum_i \omega_i,$
where $\omega_i=\sum_{j} a_{ij}\otimes g_i \otimes v_{j}.$
Since $\omega$ is $G$-invariant, we get $g_i\omega=\omega$, and $g_i\omega_1=\omega_i$
(since $g_1=1$ and by the unicity of the expression). So 
$$\omega=\sum_ig_i\omega_1=\Tr^G_H\omega_1.$$ It also follows that $\omega_1$ is $H$-invariant, and so
$\sum_{j} a_{1j}\otimes v_{j}\in A^H(M).$
Therefore, $\psi$ is surjective.
We remark that $\omega=0$ {\em if and only if} $\omega_1=0$, hence injectivity. 
\end{proof}

\begin{remark}
Suppose $G$ acts linearly on $V$ such that  the invariant ring $k[V]^G$ is polynomial.
If $G$ is non-modular then all modules of covariants are free. It is natural to ask, whether
this remains true when $G$ is modular. The answer is no.

There are many known examples of groups $H$ acting linearly on a vector space $V$ over
a finite field $\F_q$ whose ring of invariants $\F_q[V]^H$  is not Cohen-Macaulay,
see~\cite{DerksenKemper}.
Let $G=\GL(V)$ be the full linear group. Then $\F_q[V]^G$ is a polynomial ring (generated by
the so-called Dickson-invariants, see \cite{Benson1993}). Since $\F_q[V]^H$ is not Cohen-Macaulay
it is not free over the polynomial subring $\F_q[V]^G$.
But $\F_q[V]^H$ is a special module of covariants for $\F_q[V]^G$. We conclude that some
modules of covariants for $\F_q[V]^G$ are not free, even though $\F_q[V]^G$ is polynomial.
\end{remark}

\subsection{Generalization of Serre's theorem}
For a linear action of $G$ on a vector space $V$ the invariant  algebra $k[V]^G$ is a polynomial algebra
{\em if and only if} the extension $k[V]^G\subset k[V]$ is free. By a theorem of Serre the group is  then generated by reflections, cf.~\cite[p.~85-86]{Benson1993}. The converse is true when $G$ is non-modular.
We  give a generalization of Serre's result.

\begin{theorem}\label{Serre}
Let $A$ be a normal graded algebra with a finite group $G$ acting on it by graded algebra automorphisms.
Let $H$ and $K$ be subgroups of $G$ such that $K$ contains the reflection subgroup $W$ of $G$
acting on $A$. Suppose $A^G\subset A^H$ is a free graded extension.

(i) The group $G$ is generated by $H$ and the reflections on $A$ contained in $G$, i.e., $G=WH$.

(ii) Multiplication induces an isomorphism
$$A^K\otimes_{A^G}A^H\simeq A^{K\cap H}.$$
In particular, $A^K\subset A^{K\cap H}$ is also a free graded extension, and 
$$A^KA^H=A^{K\cap H},$$ 
i.e., the ring of 
$K\cap H$-invariants is generated by the $K$-invariants and the $H$-invariants. 

(iii) Furthermore,
$A^G\subset A^H$ is a graded Gorenstein extension (or complete intersection extension)
{\em if and only if} $A^K\subset A^{K\cap H}$ is a graded Gorenstein extension (or complete intersection extension).
\end{theorem}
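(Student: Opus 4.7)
The plan is to apply Theorem~\ref{tensor} to the permutation module $M=\Ind_H^G k$, which by Lemma~\ref{alsocovariants} identifies $A^G(M)$ with $A^H$; the freeness hypothesis becomes freeness of $A^G(M)$ over $A^G$. The key extra observation is that $M=k[G/H]$ carries a natural $kG$-algebra structure (with coset indicators as orthogonal idempotents), so the identification of Lemma~\ref{alsocovariants} and the map $\mu$ of Theorem~\ref{tensor} upgrade to graded $k$-algebra homomorphisms. Combining with Mackey's decomposition
$$\Ind_H^G k\big|_K\simeq\bigoplus_{g\in K\backslash G/H}\Ind_{K\cap gHg^{-1}}^K k$$
and applying Lemma~\ref{alsocovariants} to each summand yields a graded $A^K$-algebra isomorphism
$$A^K\otimes_{A^G}A^H\simeq\bigoplus_{g\in K\backslash G/H}A^{K\cap gHg^{-1}}.$$

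For part (i), I specialize to $K=W$ and compare degree-zero parts. The left side has $(A^W\otimes_{A^G}A^H)_0=A^W_0\otimes_{A^G_0}A^H_0=k$ by the connectedness assumption $A_0=k$, while the right side has degree-zero part $k^d$, where $d=|W\backslash G/H|$. Equality in degree zero forces $d=1$, meaning $W$ acts transitively on $G/H$, equivalently $G=WH$.

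For part (ii), since $W\leq K$, the equality $G=WH$ gives $G=KH$, so the displayed isomorphism collapses to the single summand $A^{K\cap H}$. Unwinding the identifications, the resulting isomorphism $A^K\otimes_{A^G}A^H\simeq A^{K\cap H}$ is precisely the multiplication map $b\otimes a\mapsto ba$. Freeness of the left side over $A^K$ (base change of a free extension) forces $A^{K\cap H}$ to be free over $A^K$, and surjectivity of multiplication gives $A^KA^H=A^{K\cap H}$. For part (iii), base change along $A^G\hookrightarrow A^K$ gives
$$A^{K\cap H}/A^K_+A^{K\cap H}\simeq k\otimes_{A^K}(A^K\otimes_{A^G}A^H)\simeq k\otimes_{A^G}A^H=A^H/A^G_+A^H,$$
so the two fiber algebras are isomorphic as finite-dimensional graded $k$-algebras, and the Gorenstein and complete intersection properties are invariants of this isomorphism class.

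The main obstacle is part (i): one might expect to need a direct injectivity argument for the multiplication map $A^W\otimes_{A^G}A^H\to A$, which is indeed equivalent to $G=WH$ but is not easy to establish head-on. The resolution is that Theorem~\ref{tensor} together with Mackey upgrades $A^W\otimes_{A^G}A^H$ to a product of rings indexed by $W\backslash G/H$; the connectedness of the grading then collapses this indexing set to a singleton for free.
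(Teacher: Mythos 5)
Your proof is correct, and for part (i) it takes a genuinely different route from the paper. The paper also begins with Theorem~\ref{tensor} and the identification $A^H\simeq A^G(\Ind_H^G k)$, but then proceeds by showing that $A^{WH}$ is itself free over $A^G$, invoking the theorem of Noether and Auslander--Buchsbaum (freeness implies the Noether different equals the Dedekind different), noting that $\Diff^D_{A^{WH}/A^G}=(1)$ because $WH\supseteq W$, deducing that the multiplication map $A^{WH}\otimes_{A^G}A^{WH}\to A^{WH}$ is an isomorphism, and finally using a rank count to conclude $A^G=A^{WH}$ and hence $G=WH$ by Galois theory. Your argument bypasses the different machinery entirely: after pushing $\Ind_H^G k$ through Mackey and Lemma~\ref{alsocovariants}, the comparison of degree-zero pieces $(A^W\otimes_{A^G}A^H)_0=k$ versus $k^{|W\backslash G/H|}$ forces a single double coset. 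This is a cleaner, purely grading-based argument, and it is a nice observation that the connectedness assumption $A_0=k$ does the work that the different does in the paper's proof. Parts (ii) and (iii) are handled essentially as in the paper: the identity $\Ind_H^G k|_K\simeq\Ind_{K\cap H}^K k$ (which you derive from Mackey plus $G=KH$, while the paper states it directly from $G=KH$), Theorem~\ref{tensor}, and base change of fiber algebras.

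One small caution worth making explicit if you write this up: for part (i), you only need the \emph{graded $A^W$-module} isomorphism $A^W\otimes_{A^G}A^H\simeq\bigoplus_g A^{W\cap gHg^{-1}}$ — the algebra structure is not needed there. For part (ii), you do need to check that the composite isomorphism really is the multiplication map; this follows by tracing through Lemma~\ref{alsocovariants}'s explicit formula $\psi(a)=\Tr_H^G(a\otimes g_1\otimes 1)$ with $g_1=1$ and the identification of $G/H$ with $K/(K\cap H)$, and the coefficient of the identity coset is $ba$. The paper glosses over this in the same way, so your level of detail is comparable, but the verification is worth doing once.
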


\begin{proof}
(i) By Lemma~\ref{alsocovariants} we can consider $A^H$ as a module of covariants over $A^G$ by
$$A^H\simeq A^G(\Ind_H^G k).$$
Since $W\leq WH\leq G$ and $A^H$ is free over $A^G$, we can apply Theorem~\ref{tensor}
and obtain an isomorphism of $A^{WH}$-modules
$$\mu:A^{WH}\otimes_{A^G}A^G(\Ind_H^G k)\simeq A^{WH}(\Ind_H^G k)$$
and so $A^{WH}(\Ind_H^G k)$ is a free graded $A^{WH}$-module.
We remark that $\Ind_H^G k$ is a permutation $kWH$-module containing the permutation
submodule $\Ind_H^{WH}k$, which is therefore a $kWH$-direct summand. It follows
that $A^{WH}(\Ind_H^{WH}k)$ is a direct summand of $A^{WH}(\Ind_H^Gk)$ as graded
$A^{WH}$-modules, and it is therefore free as well, since finitely generated graded projective modules are free by Nakayama's lemma for connected graded algebras. Hence
$$A^H\simeq A^{WH}(\Ind_H^{WH}k)$$ 
is a free graded $A^{WH}$-module. In particular, $A^{WH}$ is a direct summand of $A^H$
as graded  $A^{WH}$-module, and therefore also as graded $A^G$-module. Since $A^H$ is
free over $A^G$, it follows again that $A^{WH}$ is also free over $A^G$.

In the remaining part of the proof of (i), we shall use the basic properties of the Noether different $\Diff^N$ (also called homological different) and the Dedekind
different $\Diff^D$ of the extension, cf.~\cite{Benson1993} or \cite{Br6}.
By a theorem of Noether and Auslander-Buchsbaum, see \cite[Theorem 3.11.1]{Benson1993} or
\cite[Lemma 2(iii)]{Br6},  freeness implies
that 
$$\Diff^N_{A^{WH}/A^G}=\Diff^D_{A^{WH}/A^G}.$$ 
Since $\Diff^D_{A^{WH}/A^G}=(1)$, we get $\Diff^N_{A^{WH}/A^G}=(1)$. 
This means that $1$ annihilates the kernel of the multiplication map 
$$A^{WH}\otimes_{A^G}A^{WH}\to A^{WH}.$$ 
Hence it is an isomorphism.
Now $A^{WH}$ is free over $A^G$, say of rank $r$. Therefore $A^{WH}\simeq A^{WH}\otimes_{A^G}A^{WH}$
has rank $r$ over $A^{WH}$, hence $r=1$. We conclude that $A^G=A^{WH}$ and so from Galois theory it follows that indeed $G=WH$.

(ii) Since $G=WH=KH$, it follows that $\Ind_H^Gk=\Ind_{K\cap H}^Kk$ as $kK$-modules.
Now we can apply Theorem~\ref{tensor} and using that 
$$A^H\simeq A^G(\Ind_H^Gk)\ {\rm and}\ A^{K\cap H}\simeq A^K(\Ind_{K\cap H}^Kk)$$ 
we obtain the isomorphism
$$\mu:A^K\otimes_{A^G}A^H\simeq A^{K\cap H}.$$ From this the remaining assertions in (ii) and
(iii) follow easily.
\end{proof}

In the linear case, we also have a converse.
\begin{proposition}\label{Serreconverse}
Let the finite group $G$ act linearly on $V$ and let $H$ and $K$ be subgroups such that $W\leq K\leq G$.
Suppose $G=KH$, $k[V]^{K\cap H}$ is free over $k[V]^K$
and $k[V]^{K\cap H}$ is generated by $k[V]^K$ and $k[V]^H$. 
Then $k[V]^H$ is free over $k[V]^G$.
\end{proposition}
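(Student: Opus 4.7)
The strategy is to realize both $k[V]^H$ and $k[V]^{K\cap H}$ as modules of covariants over a common $kG$-module, and then to invoke the converse direction of Corollary~\ref{converse}.

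Set $M:=\Ind_H^G k$, the permutation $kG$-module on $G/H$. By Lemma~\ref{alsocovariants} there is an isomorphism of graded $k[V]^G$-modules $k[V]^H\simeq k[V]^G(M)$. The hypothesis $G=KH$ makes the double coset set $K\backslash G/H$ trivial, so Mackey's restriction formula identifies $M|_K$ with $\Ind_{K\cap H}^K k$; a second application of Lemma~\ref{alsocovariants} then yields $k[V]^{K\cap H}\simeq k[V]^K(M)$. I would choose left coset representatives of $G/H$ inside $K$ (possible precisely because $G=KH$), so that the same elements also represent $K/(K\cap H)$; comparing the trace formulas of Lemma~\ref{alsocovariants} in the two cases shows that the ring inclusion $k[V]^H\subset k[V]^{K\cap H}$ coincides, under these isomorphisms, with the natural inclusion of covariants $k[V]^G(M)\subset k[V]^K(M)$.

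Because $k[V]^H$ is a subring of $k[V]^{K\cap H}$, the hypothesis $k[V]^{K\cap H}=k[V]^Kk[V]^H$ tells us that $k[V]^{K\cap H}$ is generated, as a graded $k[V]^K$-module, by homogeneous elements of $k[V]^H$. Combined with the assumption that $k[V]^{K\cap H}$ is a free graded $k[V]^K$-module, graded Nakayama's lemma lets me extract from $k[V]^H$ a homogeneous $k[V]^K$-basis $r_1,\ldots,r_m$ of $k[V]^{K\cap H}$. Via the identifications of the previous paragraph, these $r_i$ then form an $k[V]^K$-basis of $k[V]^K(M)$ consisting of elements of $k[V]^G(M)$; that is, $k[V]^G(M)$ and $k[V]^K(M)$ share generators in the sense of Corollary~\ref{converse}. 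The converse direction of that corollary (valid since $W\leq K\leq G$) gives that $k[V]^G(M)$ is free over $k[V]^G$, and translating back via Lemma~\ref{alsocovariants} yields the desired freeness of $k[V]^H$ over $k[V]^G$.

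The only real subtlety is verifying the compatibility of the two instances of Lemma~\ref{alsocovariants} with the natural inclusion of invariant rings; this in turn hinges on being able to choose coset representatives of $G/H$ inside $K$, which is precisely what the assumption $G=KH$ supplies. Once that compatibility is in hand, the three hypotheses of the proposition match, respectively, the input needed for the Mackey identification, the freeness input to Corollary~\ref{converse}, and the shared-generators input to Corollary~\ref{converse}.
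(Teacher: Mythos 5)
Your proposal is correct and follows essentially the same route as the paper's proof: identify $k[V]^H$ with $k[V]^G(\Ind_H^G k)$ and $k[V]^{K\cap H}$ with $k[V]^K(\Ind_H^G k)$ (using $G=KH$ to get $\Ind_H^G k\simeq\Ind_{K\cap H}^K k$ as $kK$-modules), translate the ring-generation hypothesis into the "shared generators" condition, and then apply the converse direction of Corollary~\ref{converse}. The extra care you take — choosing coset representatives inside $K$ to verify that the two instances of Lemma~\ref{alsocovariants} are compatible with the inclusion $k[V]^H\subset k[V]^{K\cap H}$, and invoking graded Nakayama to extract a basis — is exactly the detail the paper leaves implicit.
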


\begin{proof}
Since $k[V]^{K\cap H}$ is generated by $k[V]^K$ and $k[V]^H$, the $k[V]^K$-module
homomorphism 
$$\mu: k[V]^K\otimes_{k[V]^G}k[V]^H\to k[V]^{K\cap H}$$
induced by multiplication is surjective. So the $k[V]^G$-module $k[V]^H$ and the
$k[V]^K$-module $k[V]^{K\cap H}$ share generators. Since $G=KH$, we get that
$\Ind_H^Gk$ and $\Ind^K_{K\cap H}k$ are isomorphic as $kK$-modules. Hence
$$k[V]^{K\cap H}\simeq k[V]^K(\Ind^K_{K\cap H}k)\simeq  k[V]^K(\Ind_H^Gk)$$ 
as
$k[V]^K$-modules. And 
$$k[V]^G(\Ind_H^Gk)\simeq k[V]^H$$ 
as $k[V]^G$-modules.
So $k[V]^K(\Ind_H^Gk)$ is free and shares generators with $k[V]^G(\Ind_H^Gk)$.
By Corollary~\ref{converse}, we conclude that $k[V]^G(\Ind_H^Gk)=k[V]^H$ is free
over $k[V]^G$.
\end{proof}

\subsection{Examples of free extensions}
In the linear case, Chevalley-Shephard-Todd's classical theorem says that if $G$ is generated by
reflections and
$G$ is non-modular, then the extension $k[V]^G\subset k[V]$ is a graded complete
intersection extension, cf.~\cite[Theorem 7.2.1]{Benson1993}, \cite[Theorem 6]{Br6}. 
This was generalized by Hochster-Eagon~\cite{HE}  to more general actions by
adapting  Chevalley's conceptual proof. An automorphism $\sigma$ of an integral domain $R$ 
is called a {\em Hochster-Eagon reflection} if there is a non-zero $f\in R$ such that
$\sigma(a)-a\in (f)$ for all $a\in R$. Hochster-Eagon reflections are also 
reflections in our sense, but not necessarily conversely. If $R$ is factorial, then reflections  are the
same as Hochster-Eagon reflections. 

\begin{proposition}[Hochster-Eagon, Avramov]
Let $A$ be a normal graded algebra and  $G$ a finite group of graded algebra automorphisms of $A$.
Suppose $H\normal G$, $G/H$ is non-modular and the action of $G/H$ on
$A^H$ is generated by Hochster-Eagon reflections.
Then $A^G\subset A^H$ is a graded complete intersection extension.
\end{proposition}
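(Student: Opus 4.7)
The strategy is a two-step procedure: first reduce to the case of a trivial normal subgroup, then invoke the Hochster-Eagon theorem \cite{HE} on reflection groups, with Avramov's refinement \cite{Av} controlling the complete-intersection structure of the fibre.

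For the reduction, I would observe that $A^H$ is itself a normal graded algebra (the invariant ring of a normal graded algebra under a finite group of graded automorphisms), that $G/H$ acts on $A^H$ as a finite group of graded $k$-algebra automorphisms, and that $(A^H)^{G/H}=A^G$. One may therefore replace the triple $(A,G,H)$ by $(A^H, G/H, \{1\})$: the hypotheses transfer verbatim, so it suffices to prove the following reduced statement. If $\Gamma$ is a finite non-modular group of graded automorphisms of a normal graded algebra $B$ generated by Hochster-Eagon reflections on $B$, then $B^\Gamma \subset B$ is a graded complete intersection extension.

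For the reduced statement the argument follows Chevalley's classical proof of Chevalley-Shephard-Todd, with partial derivatives replaced by divided-difference operators coming from Hochster-Eagon reflections. Non-modularity of $\Gamma$ supplies the Reynolds operator $\rho:=|\Gamma|^{-1}\sum_{\sigma\in\Gamma}\sigma:B\to B^\Gamma$, which splits the inclusion. For each generating Hochster-Eagon reflection $\sigma\in\Gamma$, with nonzero $f_\sigma\in B$ satisfying $\sigma(a)-a\in (f_\sigma)$ for all $a\in B$, the operator $\Delta_\sigma(a):=(\sigma(a)-a)/f_\sigma$ is well-defined because $B$ is a domain and plays the role of a twisted derivation. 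Using these operators, the Hochster-Eagon argument of \cite{HE} shows that any homogeneous minimal set of generators of $B$ over $B^\Gamma$ is already a free basis, so the extension is graded free. The complete-intersection structure of the fibre $B/B^\Gamma_+B$ then follows by combining the Hilbert-series identity $\GF(B/B^\Gamma_+B;t)=\GF(B;t)/\GF(B^\Gamma;t)$, a Koszul-style analysis of the generators of $B^\Gamma_+B$ inside $B$ in terms of the reflecting elements $f_\sigma$, and Avramov's transfer results \cite{Av} relating deviations of a free graded extension to those of its fibre.

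I expect the main obstacle to lie in pushing the classical Chevalley-Hochster-Eagon argument beyond the polynomial-ring setting. Classically one exploits the algebraic independence of a regular system of parameters and the non-vanishing of Jacobian determinants; here only normality of $B$ is available, so the divided-difference operators $\Delta_\sigma$ must carry the argument on their own, and their interaction with the graded structure must be checked against the general reflexivity framework of Section~2 (in particular Lemma~\ref{reflexive} and Proposition~\ref{reflexivecovariants}). Strengthening graded freeness to the complete-intersection assertion about the fibre, rather than the mere Cohen-Macaulay conclusion produced by Hochster-Eagon, is the Avramov step and requires a careful count of the Koszul syzygies among the generators of $B^\Gamma_+B$.
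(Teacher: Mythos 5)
Your proof is correct and takes essentially the same approach as the paper: both pass to the action of $\Gamma=G/H$ on $B=A^H$ and then cite Hochster--Eagon for freeness of $B^\Gamma\subset B$ and Avramov for the complete-intersection structure of the fibre. The paper's proof is in fact just those two citations in one sentence each; your write-up merely makes the implicit reduction explicit and unpacks what the cited arguments are doing under the hood.
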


\begin{proof}
Hochster-Eagon~\cite{HE} proved the freeness of $A^G\subset A^H$, and Avramov~\cite{Av} noticed that the fiber algebra is a graded complete intersection. So, in other words, $A^G\subset A^H$
is a graded complete intersection extension.
\end{proof}

\begin{example}
An example of a graded complete intersection extension of a different kind is due to Nakajima~\cite{Na}.
Recall the definition of $G^1$ in subsection~\ref{notation}.

\begin{proposition}[Nakajima]
(i) Suppose that $A^G$ is factorial and $G^1\leq H\leq G$. Then $A^G\subset A^{H}$ is a free extension and $G/H$ is non-modular.

(ii) Suppose that $A$ is  factorial and $G$ is generated by reflections on $A$. Then 
$A^G\subset A^{G^1}$ is a graded complete intersection extension.
\end{proposition}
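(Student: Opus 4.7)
The plan is to use the $A^G$-module decomposition
$$A^{G^1}=\bigoplus_{\lambda\in X(G)}A^G_\lambda$$
recalled in Section~\ref{notation}, combined with the fact that rank-one reflexive modules over a factorial ring are free.

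\emph{Part (i).} Each semi-invariant module $A^G_\lambda\simeq A^G(k^*_\lambda)$ is a rank-one reflexive $A^G$-module by Proposition~\ref{reflexivecovariants}; factoriality of $A^G$ forces such a module to be free, so $A^{G^1}$ is $A^G$-free of total rank $|X(G)|$. Matching with the Galois-theoretic rank $[K^{G^1}:K^G]=[G:G^1]$ (for $K$ the quotient field of $A$) gives $|X(G)|=[G:G^1]$. Since $G^1=\bigcap_\lambda\ker\lambda$, the quotient $G/G^1$ embeds into $\prod_\lambda k^\times$ and is therefore abelian; every intermediate subgroup $H$ with $G^1\leq H\leq G$ is normal. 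Taking $H/G^1$-invariants of $A^{G^1}$ picks out the summands on which $H$ acts trivially:
$$A^H=\bigoplus_{\lambda|_H=1}A^G_\lambda=\bigoplus_{\lambda\in X(G/H)}A^G_\lambda,$$
again an $A^G$-free module, of rank $|X(G/H)|$. Galois theory identifies this rank with $|G/H|$. Since $|X(G/H)|\leq|G/H|$ for any finite abelian group, with equality precisely when $G/H$ is non-modular (and $k$ contains enough roots of unity), the conclusion that $G/H$ is non-modular follows.

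\emph{Part (ii).} I first invoke Nakajima's classical theorem that when $A$ is factorial and $G$ is generated by reflections on $A$, the ring $A^G$ is itself factorial. Part~(i) applied with $H=G^1$ then provides a free extension $A^G\subset A^{G^1}$ with abelian non-modular Galois group $G/G^1$. To identify the fibre algebra $B:=A^{G^1}/A^G_+A^{G^1}$ as a graded complete intersection, I would construct an $A^G$-algebra presentation of $A^{G^1}$ as follows: pick a decomposition $X(G)\simeq\prod_{i=1}^r\langle\lambda_i\rangle$ into cyclic groups of orders $n_i$ (primary decomposition so the $n_i$ are prime powers), free rank-one generators $f_i\in A^G_{\lambda_i}$, and set $u_i:=f_i^{n_i}\in A^G_+$. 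This yields a graded $A^G$-algebra morphism
$$\Phi\colon R:=A^G[x_1,\ldots,x_r]/(x_i^{n_i}-u_i)_{i=1}^r\longrightarrow A^{G^1},\qquad x_i\mapsto f_i,$$
between $A^G$-free modules of equal rank $\prod n_i=|X(G)|=[G:G^1]$. At the generic point, the image of $\Phi_K$ contains $K^G(f_1,\ldots,f_r)$; the subgroup of $G/G^1$ fixing each $f_i$ is $\bigcap_i(\ker\lambda_i)/G^1=\{1\}$, so Galois theory forces $K^G(f_1,\ldots,f_r)=K^{G^1}$, and $\Phi_K$ is an isomorphism. Combined with pseudo-surjectivity at every height-one prime of $A^G$, Lemma~\ref{reflexive}(i) upgrades $\Phi$ to an honest isomorphism. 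Under $\Phi$, the fibre becomes $k[x_1,\ldots,x_r]/(x_i^{n_i})$, a polynomial ring modulo the regular sequence $x_1^{n_1},\ldots,x_r^{n_r}$, hence a graded complete intersection.

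The principal obstacle is the pseudo-surjectivity of $\Phi$: one must verify that for the chosen cyclic decomposition of $X(G)$, each product $f_1^{a_1}\cdots f_r^{a_r}$ (with $0\leq a_i<n_i$) genuinely furnishes a free $A^G$-generator of $A^G_{\prod\lambda_i^{a_i}}$. This may fail due to a degree ``wrap-around'' at height-one primes of $A^G$ unless the cyclic decomposition is aligned with the ramification structure of $A^G\subset A$; the reflection-generated hypothesis on $G$ is precisely what allows a primary-component decomposition of $X(G)$ to be compatible with the inertia data of each codimension-one prime, so that the local analysis goes through.
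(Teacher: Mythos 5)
Your Part (i) is correct and follows the same route as the paper: decompose $A^H$ as the direct sum of the $A^G_\lambda$ with $H\subseteq\ker\lambda$, note each summand is rank-one reflexive (Proposition~\ref{reflexivecovariants}), and use factoriality of $A^G$ to conclude each is principal, hence free. Your non-modularity argument via counting ($|X(G/H)|=|G/H|$ forces $p\nmid|G/H|$) is sound; the paper does not spell this out, and one can get it even more directly by observing that $G/G^1$ embeds into a direct power of $k^\times$ and therefore has order coprime to $p$, so the same holds for its quotient $G/H$. (A small quibble: the parenthetical ``precisely when ... enough roots of unity'' overclaims a biconditional you do not need; only the direction $|X(G/H)|=|G/H|\Rightarrow$ non-modular is used, and that direction is unconditional.)

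For Part (ii), you go beyond the paper, which simply refers the reader to Nakajima's original article. Your plan --- factoriality of $A^G$ via Nakajima's theorem, a lift $\Phi\colon A^G[x_1,\ldots,x_r]/(x_i^{n_i}-u_i)\to A^{G^1}$ of a cyclic decomposition of $X(G)$, injectivity and generic isomorphism from the Galois-theoretic count, then Lemma~\ref{reflexive}(i) --- is architecturally reasonable and the fibre of the source is visibly $k[x_1,\ldots,x_r]/(x_i^{n_i})$. However, there is a genuine gap that you yourself flag: you never establish pseudo-surjectivity of $\Phi$, i.e.\ that each monomial $f_1^{a_1}\cdots f_r^{a_r}$ with $0\le a_i<n_i$ is a free $A^G$-generator of $A^G_{\prod\lambda_i^{a_i}}$. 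This is exactly where the reflection hypothesis must enter: at a codimension-one prime $\gotP$ with inertia group $G_i(\gotP)$, the local valuation of $f_\lambda$ is governed by $\lambda|_{G_i(\gotP)}$, and the issue is whether $\sum_i a_i\,v_\gotP(f_i)$ equals $v_\gotP(f_{\prod\lambda_i^{a_i}})$ rather than exceeding it by a multiple of the order of the relevant inertia character (``wrap-around''). Your final paragraph asserts that a primary decomposition of $X(G)$ can be chosen ``compatible with the inertia data of each codimension-one prime,'' but since there may be several such primes with incompatible ramification orders, it is not obvious a single decomposition works for all of them simultaneously, and no argument is supplied. Without closing this gap, the conclusion that $\Phi$ is an isomorphism (and hence that the extension is a graded complete intersection) does not follow. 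As written, the proposal proves Part (i) and gives a plausible but incomplete sketch of Part (ii).
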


\begin{proof}
These results were (implicitly) proved by Nakajima~\cite{Na}.
For (i), we note that $A^{H}$ is isomorphic to the direct sum of all modules of semi-invariants of
types $\lambda$ having $H$ in the kernel.
Any module of semi-invariants is reflexive of rank one, hence isomorphic to a divisorial ideal. Since $A^G$ is factorial
by assumption, all divisorial ideals are principal ideals. So all modules of
semi-invariants are free and so $k[V]^G\subset k[V]^{H}$ is free. For (ii), we refer to 
Nakajima~\cite{Na}.
\end{proof}
\end{example}

\begin{example} 
An example of a modular free extension. 
Suppose $A^H$ is factorial, $HW=G$ and $|G/H|=p$ (the characteristic of $k$). Then $A^G$ is also
factorial, and $A^G\subset A^H$ is a graded complete intersection extension {\em if and only if }$A^G$ is
a direct summand of $A^H$, cf.~\cite{Br7}. The direct summand property holds, for example,
when $A^G\subset A$ is a free graded extension.
\end{example}

\section{A ramification formula}\label{ramification}
Let $A$ be a normal graded algebra with a finite group $G$ of $k$-algebra automorphisms, $M$ a finite dimensional $kG$-module, and $H<G$ a subgroup.
We use the transfer map
$$\Tr^H: A\otimes_k M\to (A\otimes_k M)^H: u\mapsto \sum_{\sigma \in H}\sigma(u)$$
and the relative transfer map
$$\Tr^G_H: (A\otimes_k M)^H\to (A\otimes_k M)^G: u\mapsto \sum_{\sigma H \in G/H}\sigma(u)$$
to construct the following homomorphisms:
$$\Phi^H:A\otimes_k M\to\Hom_{A^H}(A,(A\otimes_k M)^H):\ [\Phi^H(\omega)](a):=\Tr^H(a\omega)$$
where $\omega\in A\otimes_k M$, $a\in A$ and 
$$\Phi^G_H:\Hom_{A^H}(A,(A\otimes_k M)^H)\to \Hom_{A^G}(A,(A\otimes_k M)^G):\
\phi\mapsto \Tr^G_H\circ \phi.$$
Both $\Phi^H$ and $\Phi^G_H$ are injective $A$-module homomorphisms between reflexive $A$-modules and $$\Phi^G=\Phi_H^G\circ\Phi^H.$$

The following theorem is the technical heart of this article; all our main results rely on it.
\begin{theorem}\label{main}
Let $A$ be a normal graded algebra with a finite group $G$ of $k$-algebra automorphisms, $M$ a finite dimensional $kG$-module, and $H<G$ a subgroup.

(i) The $A$-module homomorphism of reflexive $A$-modules
$$\Phi^G_H:\Hom_{A^H}(A,(A\otimes_k M)^H)\to \Hom_{A^G}(A,(A\otimes_k M)^G)$$
is injective, generically an isomorphism, and an isomorphism at the prime ideal of height one $\gotP\subset A$ if we have an equality of inertia subgroups $G_i(\gotP)=H_i(\gotP)$. 

(ii) For any subgroup $K<G$ containing all reflections in $G$ on $A$, the map
$\Phi_K^G$ is an isomorphism.

(iii) Write $C^G_H$ for the cokernel of $\Phi^G_H$. Let $\gotP$ be a height one prime ideal of $A$ with inertia subgroups $H_i:=H_i(\gotP)$ and $G_i:=G_i(\gotP)$.   We have
$\left(C^G_H\right)_\gotP\simeq
\left(C^{G_i}_{H_i}\right)_\gotP,$ and so in particular,
$$\length_{A_\gotP}\left(\left(C^G_H\right)_\gotP\right)=
\length_{A_\gotP}\left( \left(C^{G_i}_{H_i}\right)_\gotP\right).$$
\end{theorem}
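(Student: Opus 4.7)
Both $\Hom$-modules are reflexive, finitely generated, graded $A$-modules by Lemma~\ref{reflexive}(ii), since $A$, $(A\otimes_kM)^H$, and $(A\otimes_kM)^G$ are reflexive (the last two by Proposition~\ref{reflexivecovariants}). Localizing at the generic point, the separability of $L/L^H$ and $L/L^G$ makes the trace pairings non-degenerate, so $\Phi^H$ and $\Phi^G$ induce $L$-linear isomorphisms $L\otimes_kM\simeq\Hom_{L^H}(L,(L\otimes M)^H)$ and $L\otimes_kM\simeq\Hom_{L^G}(L,(L\otimes M)^G)$ of dimension $\dim_kM$, under which $\Phi^G_H$ becomes the identity. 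Thus $\Phi^G_H$ is a map of reflexive $A$-modules of the same rank $\dim_kM$, generically an isomorphism, and hence injective since the source is torsion-free. By Lemma~\ref{reflexive}(i), the global isomorphism statement in (ii) will follow as soon as we show $\Phi^G_H$ is an isomorphism at every relevant height one prime, which in turn will come from (iii).

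\textbf{Part (iii), the technical core.} Fix a height one prime $\gotP$, write $D=G_d(\gotP)$, $I=G_i(\gotP)$, $H_d=H\cap D$, $H_i=H\cap I$, and localize at $\gotP$. The plan is a two-step Mackey-style reduction $G\rightsquigarrow D\rightsquigarrow I$. Decompose $G=\bigsqcup_g HgD$ into $(H,D)$-double cosets; the associated primes $\{hg\gotP:h\in H\}$ are all distinct from $\gotP$ precisely when $g\notin HD$. For such $g$, prime avoidance yields an element $a\in\bigcap_h hg\gotP$ with $a\notin\gotP$, hence a unit in $A_\gotP$, and this lets me show the contribution to $\Tr^G_H$ from the left cosets inside $HgD$ lands in $\gotP\cdot(A\otimes M)^G$ after localization, so it contributes nothing to the localized cokernel. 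This reduces the computation to the trivial double coset $HD$; factoring $\Phi^G_H=\Phi^G_{HD}\circ\Phi^{HD}_H$, the first factor becomes a localized isomorphism at $\gotP$, giving $(C^G_H)_\gotP=(C^{HD}_H)_\gotP$. An entirely analogous argument with cosets of $I$ in $D$ --- where the intermediate extension $A^I_\gotP\subset A^D_\gotP$ is unramified at $\gotP$ in the sense that $D/I$ acts faithfully on the residue-field side --- brings the calculation down to $(C^I_{H_i})_\gotP$. The length equality is then automatic, since these modules are isomorphic over $A_\gotP$. The main obstacle will be the bookkeeping in the double-coset decomposition of the trace and rigorously verifying the vanishing of the non-inertia contributions after localization.

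\textbf{Deduction of (i) and (ii).} Part (i)(c) is now immediate: if $G_i(\gotP)=H_i(\gotP)$, then $\Phi^{G_i}_{G_i}$ is the identity map, so $C^{G_i}_{H_i}=0$; by (iii) $(C^G_H)_\gotP=0$, i.e., $\Phi^G_H$ is an isomorphism at $\gotP$. For (ii), every non-trivial element of $G_i(\gotP)$ is by definition a reflection on $A$, so $G_i(\gotP)\leq W\leq K$; therefore $K_i(\gotP)=K\cap G_i(\gotP)=G_i(\gotP)$ at every height one prime. By (i)(c), $\Phi^G_K$ is an isomorphism at every height one prime as well as generically, and hence a global isomorphism by Lemma~\ref{reflexive}(i).
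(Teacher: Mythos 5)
Your setup and the deduction of (i) and (ii) from (iii) are correct, but the core argument for (iii) has two genuine gaps.

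First, the factorization $\Phi^G_H=\Phi^G_{HD}\circ\Phi^{HD}_H$ is ill-defined: the set product $HD$ (or $DH$) is not a subgroup of $G$ in general, since neither $H$ nor $D=G_d(\gotP)$ is assumed normal, so the transfer map $\Tr^G_{HD}$ and the associated $\Phi$-maps do not exist. (As a side remark, to group the terms of $\Tr^G_H=\sum_{gH\in G/H}g$ into double cosets you would need $D\backslash G/H$, not $H\backslash G/D$, since the left $H$-cosets are partitioned by the left $D$-action.) The paper circumvents this by factoring only through honest subgroups: it shows $\Phi^G_{G_d}$ and $\Phi^H_{H_d}$ (with $H_d=H\cap G_d$) are isomorphisms at $\gotP$, and then the identity $\Phi^G_H\circ\Phi^H_{H_d}=\Phi^G_{G_d}\circ\Phi^{G_d}_{H_d}$ transfers the cokernel from $\Phi^G_H$ to $\Phi^{G_d}_{H_d}$, and similarly down to $\Phi^{G_i}_{H_i}$.

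Second, the reduction $D\rightsquigarrow I$ (``the intermediate extension is unramified, so...'') is precisely where the real work lies, and you wave it through. The paper's step (1) establishes that $\Phi^{G_d}_{G_i}$ is an isomorphism at every height-one $\gotP$ by exploiting the Galois structure of $R=(A^{G_d})_{\gotP_d}\subset S=(A^{G_i})_{\gotP_i}$ with group $\Gamma=G_d/G_i$: it uses the perfectness of the trace pairing $S\to\Hom_R(S,R)$ and the isomorphism of the twisted group ring $S\Gamma$ with $\End_R(S)$, then checks that the localized $\Phi^{G_d}_{G_i}$ is exactly the trace-pairing isomorphism. Nothing in your sketch replaces this. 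Relatedly, for the step $G\rightsquigarrow D$ the paper does not use prime avoidance and localization alone; it passes to the $\gotp$-adic completion $\widehat{A}$, which is a product of local rings indexed by $G/G_d$ with a system of orthogonal $G$-equivariant idempotents $e_{g\gotP}$, and shows that multiplication by $e_\gotP$ realizes $\Phi^G_{G_d}$ as an isomorphism after completion (hence after localization). Your prime-avoidance claim that the non-trivial double-coset terms land in $\gotP\cdot(A\otimes M)^G$ after localizing is not justified and is not obviously correct as stated, since the individual terms $g(\omega)$ are not $G$-invariant and the map being analyzed is a composition $\Tr^G_H\circ\phi$ rather than a decomposable sum.
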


\begin{proof}
Fix  a prime ideal $\gotP\subset A$ with decomposition group 
$G_d$ and inertia subgroup $G_i$. Put 
$$\gotp:=\gotP\cap A^G,\ \gotP_i:=A^{G_i}\cap\gotP,\ \gotP_d:=A^{G_d}\cap \gotP,\ 
R:=(A^{G_d})_{\gotP_d},\ S:=(A^{G_i})_{\gotP_i}$$
and $\Gamma:=G_d/G_i$. It is known that $\gotP$ is the only prime ideal of $A$ lying above
$\gotP_d$ (or $\gotP_i$) (see e.g.~\cite[Satz 20.4]{SS}). For any $A$-module $N$, we get 
$N_{\gotP_d}\simeq N_{\gotP_i}\simeq N_{\gotP}$, and  $(A^{G_i})_{\gotP_d}\simeq (A^{G_i})_{\gotP_i}=S$.

If $B$ is a flat $A^G$-algebra with trivial $G$-action, then
$(A\otimes_{A^G}B)^G\simeq A^G\otimes_{A^G}B\simeq B$, and similarly,
$((A\otimes_{A^G}B)\otimes_k M)^G\simeq (A\otimes_kM)^G\otimes_{A^G}B$, cf.~\cite[Lemma 1]{Knighten}. In particular, $S\simeq (A_{\gotP_i})^{G_i}\simeq (A_\gotP)^{G_i}$
and $R\simeq (A_{\gotP_d})^{G_d}\simeq (A_\gotP)^{G_d}$.
We shall use this repeatedly without further mention.

We proceed in various steps.

(1) In the first step, we prove that $\Phi^{G_d}_{G_i}$ is an isomorphism if $\gotP$ has height one or if $\gotP=(0)$. Furthermore,  we prove that $\Phi^G_H$ is injective and generically an isomorphism for any subgroup $H<G$. 

Some preparations first. 
The extension $R\subset S$ is a Galois-extension of local rings with Galois group $\Gamma$ (see e.g.~\cite[Satz 20.4]{SS}). In particular, $S$ is free over $R=S^\Gamma$ of finite rank;
\begin{equation}\label{Dedekinddifferent}
S\to \Hom_R(S,R):s_1\mapsto (s_2\mapsto \Tr^{\Gamma}(s_1s_2))
\end{equation} is an isomorphism  of
$S$-modules (see \cite[12.5 Korollar]{SS} or \cite[Appendix]{AG}); and the natural inclusion
$S\Gamma\to \End_R(S)$ is an isomorphism of $R$-algebras, where $S\Gamma$ is the twisted group ring.
The twisted group ring $S\Gamma$ is free as  left $S$-module with basis $\{\gamma;\ \gamma\in \Gamma\}$ and multiplication table 
$(s_1\gamma_1)(s_2\gamma_2)=s_1\gamma_1(s_2)\gamma_1\gamma_2$, where $s_1,s_2\in S$,
$\gamma_1,\gamma_2\in\Gamma$. Fix  an $R$-basis $z_1,\ldots, z_n$ of $S$, with dual basis
$z_1^*,\ldots,z_n^*$ of $\Hom_R(S,R)$. There are unique $u_i$'s such that $z_i^*(s)=\Tr^{\Gamma}(u_is)$, for $s\in S$. We get an equality of operators  $\sum_i z_i \Tr^{\Gamma} u_i=1$ in $S\Gamma$.

Put $N:=(A_\gotP\otimes_k M)^{G_i}$. We claim that  the multiplication map
$$\mu:S\otimes_RN^{\Gamma}\to N$$
is an isomorphism of $S\Gamma$-modules, where $S\Gamma$ only acts non-trivially on the first factor of $S\otimes_R N^\Gamma$. Its inverse is
$\alpha:N\to S\otimes_R N^{\Gamma}$ given by the formula
$$\alpha(n):=\sum_i z_i\otimes \Tr^{\Gamma}(u_i n),$$ 
where $n\in N$. We check this as follows. For $n\in N$, we have
$\mu\alpha(n)=\sum_i z_i\Tr^{\Gamma}(u_in)=n$, since $\sum_i z_i\Tr^{\Gamma}u_i=1\in S\Gamma$. Also
$$\alpha(\mu(s\otimes n))=\sum_i z_i\otimes \Tr^{\Gamma}(u_isn)=
\sum_i z_i\otimes \Tr^{\Gamma}(u_is)n=\sum_i z_i\Tr^{\Gamma}(u_is)\otimes n=s\otimes n,$$ 
where $s\in S$, $n\in N^{\Gamma}$.

The localization of $\Phi^{G_d}_{G_i}$ at the prime ideal $\gotP$ is described as follows.
Since
$$\left(\Hom_{A^{G_d}}(A,(A\otimes_k M)^{G_d})\right)_{\gotP_d}\simeq
\Hom_{A^{G_d}_{\gotP_d}}(A_{\gotP_d},\left(A\otimes_k M\right)^{G_d}_{\gotP_d})
\simeq 
\Hom_{R}(A_{\gotP},\left(A_{\gotP}\otimes_k M\right)^{G_d}),$$
and similarly
$\left(\Hom_{A^{G_i}}(A,(A\otimes_k M)^{G_i})\right)_{\gotP_d}\simeq
\Hom_{S}(A_{\gotP},\left(A_{\gotP}\otimes_k M\right)^{G_i})$,
we obtain
$$\left(\Phi^{G_d}_{G_i}\right)_{\gotP}:\ \Hom_{S}(A_\gotP,(A_\gotP\otimes_k M)^{G_i})\to 
\Hom_{R}(A_\gotP,(A_\gotP\otimes_k M)^{G_d}): \phi\mapsto \Tr^{G_d}_{G_i}\circ \phi.$$
Using the isomorphism $\mu$, we finally get the following description
$$\left(\Phi^{G_d}_{G_i}\right)_{\gotP}:\ \Hom_{S}(A_\gotP,S\otimes_R N^\Gamma)\to 
\Hom_{R}(A_\gotP,N^\Gamma): \phi\mapsto \Tr^{\Gamma}\circ \mu\circ \phi.$$

The normality condition on $A$ implies that 
if $\gotP$ has height one, then $R$ and $S$ are both discrete valuation rings.  So $A_\gotP$ is finite
and free over $S$ and $N^\Gamma$ is finite and free over $R$, since both modules are torsion free.
Therefore, to prove that $\Phi^{G_d}_{G_i}$ is an isomorphism at the height one prime ideal $\gotP$,
it suffices to prove that 
$$ \Hom_{S}(S,S)\to 
\Hom_{R}(S,R): \phi\mapsto \Tr^{\Gamma}$$
is an isomorphism. But this follows from the isomorphism in (\ref{Dedekinddifferent}).

A similar proof works for $\gotP=(0)$. In that case, $G=G_d$ and $G_i=1$ and so, with $L$ the quotient field of $A$, we conclude
 $$(\Phi^G)_\gotP: L\otimes_k M\to \Hom_{L^G}(L,(L\otimes_k M)^G)$$
 is an isomorphism. Since for any subgroup $H<G$, also $(\Phi^H)_\gotP$ is an isomorphism
 and $\Phi^G=\Phi^G_H\circ \Phi^H$, it follows that $(\Phi^G_H)_{\gotP}$ is also an isomorphism,
 i.e., $\Phi^G_H$ is generically an isomorphism.
 
 Finally, we prove injectivity of $\Phi^G_H$. Let $\phi:A\to (A\otimes_k M)^H$ be an $A^H$-homomorphism,
 such that $\Tr_H^G\circ \phi(a)=0$, for all $a\in A$. We extend $\phi$ to an $L^H$-linear map
 $\tilde{\phi}:L\to (L\otimes_k M)^H$ by $\tilde{\phi}(\frac{a}{s})=\frac{1}{s}\phi(a)$, where $a\in A$, $s\in A^H$, $s\neq 0$. So $\tilde{\phi}$ is in the kernel of the map
 $$\Hom_{L^H}(L,(L\otimes_k M)^H)\to \Hom_{L^G}(L,(L\otimes_k M)^G):\psi\to \Tr^G_H\circ \psi,$$
which we just proved to be an isomorphism. So $\tilde{\phi}=0$, hence $\phi=0$ (since 
$(A\otimes_kM)^H$ is torsion free over $A^H$) and so $\Phi^G_H$ is injective.

(2) In the second step, we show that $\Phi^G_{G_d}$ is an isomorphism at $\gotP$ (without any restriction on the height of $\gotP$), using completion as a tool. 

Some preparations first.
For any $A^G$-module $N$, write $\widehat{N}=N^{\wedge}$ for the completion of the localization $N_\gotp$  with respect to the $\gotp$-adic topology. 
So $\widehat{N}$ is a module over the complete local ring $\widehat{A^G}$, the completion of 
the local ring $A^G_\gotp$ at its maximal ideal. We recall the following basic facts about
completion. Putting the hat on is an exact functor from
$A^G$-modules to $\widehat{A^G}$-modules, and so $\widehat{A}$ is flat over $\widehat{A^G}\simeq \widehat{A}^G$. The ring $\widehat{A}$ is a complete semilocal ring, whose maximal ideals correspond to the prime ideals of $A$ in the $G$-orbit of $\gotP$, see~\cite[Corollary 7.6]{Eisenbud}. 
More explicitly,  there is a collection of  idempotents $\{e_{g\gotP}, gG_d\in G/G_d\}$ in $\widehat{A}$,  such that $g(e_\gotP)=e_{g\gotP}$ for $g\in G$ and
$$e_{g\gotP}^2=e_{g\gotP},\ 
e_{g\gotP}\cdot e_{g'\gotP}=0 \hbox{  if $g\gotP\neq g'\gotP$},\  1=\sum_{gG_d\in G/G_d} e_{g\gotP}\in \widehat{A}.$$
The Cartesian product alluded to before is then $\widehat{A}=\bigoplus_{gG_d\in G/G_d}\widehat{A}e_{g\gotP}$. The maximal ideal of $\widehat{A}$ corresponding to $\gotP$ is 
 $$\gotm:=\widehat{\gotp}\widehat{A}e_\gotP\oplus \widehat{A}(1-e_\gotP)=
\widehat{\gotP}=\gotP\widehat{A},$$ and $g(\gotm)$ is the maximal
ideal corresponding to $g\gotP$, for $g\in G$; and these are all the maximal ideals of $\widehat{A}$.
Then $\widehat{A}e_\gotP\simeq\widehat{A}_\gotm\simeq \widehat{A_\gotP}$ is isomorphic to the completion of $A_\gotP$
at its maximal ideal. Similarly, for any 
 $A$-module $N$ we have that $e_\gotP \widehat{N}\simeq\widehat{N}_\gotm\simeq 
 \widehat{N_\gotP}$
is isomorphic to the completion of $N_\gotP$ at the maximal ideal of $A_\gotP$.

 We apply all this to the $A$-module $$N:=\Hom_{A^{G_d}}(A,(A\otimes_kM)^{G_d}).$$
First of all, we can identify $\widehat{N}$ with $\Hom_{\widehat{A}^{G_d}}(\widehat{A},(\widehat{A}\otimes_kM)^{G_d}).$ Let $\phi\in \widehat{N}$. By definition of the $\widehat{A}$-action on  $\widehat{N}$, the element $e_\gotP\phi\in e_\gotP\widehat{N}$ is the
 $\widehat{A}^{G_d}$-morphism  $(e_\gotP\phi)(a):=\phi(ae_\gotP)$, where $a\in\widehat{A}$.
Since $e_\gotP$ is $G_d$-invariant and $\phi$ is $\widehat{A}^{G_d}$-linear, we also have
$$(e_\gotP\phi)(a)=\phi(ae_\gotP)=\phi(ae_\gotP e_\gotP)=e_\gotP\phi(ae_\gotP);$$ 
and so the image of $e_\gotP\phi$ is contained in $(\widehat{A}e_\gotP\otimes_kM)^{G_d}$.
Therefore, we can make the identification
 $$e_\gotP \widehat{N}\simeq \Hom_{\widehat{A}^{G_d}}(\widehat{A}e_\gotP,(\widehat{A}e_\gotP\otimes_kM)^{G_d}).$$
 
 Similarly, we identify
 $$e_\gotP \left(\Hom_{A^{G}}(A,(A\otimes_kM)^{G}) \right)^{\wedge}\simeq \Hom_{\widehat{A}^{G_d}}(\widehat{A}e_\gotP,(\widehat{A}\otimes_kM)^{G_d}),$$
 and then identify  $e_\gotP\cdot \left(\Phi^G_{G_d}\right)^{\wedge}$ with the map
\begin{equation}\label{iso}
 \Hom_{\widehat{A}^{G_d}}(\widehat{A}e_\gotP,(\widehat{A}e_\gotP\otimes_kM)^{G_d})\to
  \Hom_{\widehat{A}^G}(\widehat{A}e_\gotP,(\widehat{A}\otimes_kM)^{G}):\
  \phi\mapsto \Tr^{G}_{G_d}\circ \phi.
 \end{equation}
 We want to show that this map is an isomorphism.
 
As a preparation of the proof, we first remark that,  after $\gotp$-adic completion, the natural map $A^G_\gotp\to (A_\gotP)^{G_d}$ can be identified 
with  $$\pi:\widehat{A}^G\to (\widehat{A} e_{\gotP})^{G_d};\ \pi(a):=ae_\gotP.$$
We claim that $\pi$ is an isomorphism of algebras, with
inverse map given by the relative trace map $\Tr_{G_d}^G$.
This is seen as follows. 
Let $a\in \widehat{A}$ and $\sigma\in G$, then we can write
$$a=\sum_{gG_d\in G/G_d} ae_{g\gotP}\ \hbox{ and so, }
\sigma(a)=\sum_{gG_d\in G/G_d} \sigma(a)e_{\sigma g\gotP}.$$
Comparing, we get that $a \in \widehat{A}^G$ {\em if and only if} $ae_\gotP\in (\widehat{A}e_\gotP)^{G_d}$
and $ae_{g\gotP}=g(ae_{\gotP})$ for all $gG_d\in G/G_d$  {\em if and only if}
$a=\Tr^G_{G_d} (ae_\gotP)=\Tr^G_{G_d} \pi(a).$ Hence, indeed, $\pi$ is an algebra  isomorphism with inverse
$\Tr^G_{G_d}.$

Similarly, after $\gotp$-adic completion, the natural map  $(A_\gotp\otimes_kM)^G\to (A_\gotP\otimes_k M)^{G_d}$  can be identified with the map $$\pi_M:(\widehat{A}\otimes_kM)^G\to (\widehat{A} e_{\gotP}\otimes_kM)^{G_d}$$ induced by projection, and,
again, $\pi_M$ is an isomorphism with inverse map $\Tr^G_{G_d}$.

Let now $\psi\in  \Hom_{\widehat{A}^G}(\widehat{A}e_\gotP,(\widehat{A}\otimes_kM)^{G})$ and consider
the composition
$$\pi_M\circ \psi: \widehat{A}e_\gotP\to (\widehat{A}e_\gotP\otimes_kM)^{G}).$$
Let $b\in  \widehat{A}^{G_d}$ and $ae_\gotP\in\widehat{A}e_\gotP$. 
Then $\Tr^{G}_{G_d}(be_\gotP)\in \widehat{A}^G$ and so
$$\pi_M\circ \psi(bae_\gotP)=
e_\gotP \psi(\Tr^{G}_{G_d}(be_\gotP) ae_\gotP)=
e_\gotP\Tr^{G}_{G_d}(be_\gotP)\psi(ae_\gotP)=b(\pi_M\circ \psi)(bae_\gotP),$$
using the orthogonality of the idempotents and the $\overline{A}^G$-linearity of $\psi$. We conclude that
$\pi_M\circ \psi$ is $\widehat{A}^{G_d}$-linear. We get, therefore, a well-defined map
$$\Psi:  \Hom_{\widehat{A}^G}(\widehat{A}e_\gotP,(\widehat{A}\otimes_kM)^{G})\to
 \Hom_{\widehat{A}^{G_d}}(\widehat{A}e_\gotP,(\widehat{A}e_\gotP\otimes_kM)^{G_d})
:\
  \psi\mapsto \pi_M\circ \psi,
 $$
 which is the inverse of the map $e_\gotP\cdot \left(\Phi^G_{G_d}\right)^\wedge$ in (\ref{iso}), since
 $\Tr^{G}_{G_d}\circ \pi_M$ and $\pi_M\circ \Tr^{G}_{G_d}$ are the identity maps.
 
 Since $e_\gotP\cdot \left(\Phi^G_{G_d}\right)^\wedge$ can be identified with what we get from $\Phi^G_{G_d}$
 after localizing at $\gotP$ and then completing at $\gotP$, we conclude that, 
indeed, $\Phi^G_{G_d}$ becomes an isomorphism after localizing at $\gotP$ and then completing with respect to the $\gotP$-adic
topology. But then $\Phi^G_{G_d}$ already becomes an isomorphism after localizing at
the prime ideal $\gotP$, see \cite[p.~203]{Eisenbud}. Which is what we wanted to show.

(3) In the last step, we complete the proof of the theorem.
That $\Phi_H^G$ is indeed a homomorphism of reflexive $A$-modules is implied by Lemma~\ref{reflexive}(ii), (iv). 

Combining  (1) and (2), we get that $\Phi^G_{G_i}$ is an isomorphism at every prime ideal $\gotP$ of  height one.
Suppose $H<G$ is a subgroup and $\gotP\subset A$ a prime ideal of height one, such that 
the inertia subgroups coincide $H_i(\gotP)=G_i(\gotP)$. Since 
$$\Phi_H^G\circ\Phi^H_{H_i(\gotP)}=\Phi^G_{G_i(\gotP)}$$ and
both $\Phi^H_{H_i(\gotP)}$ and $\Phi^G_{G_i(\gotP)}$ are isomorphisms at $\gotP$ by (2), it follows
that $\Phi_H^G$ is an isomorphism at $\gotP$. Hence (i).
And from this also the isomorphism $\left(C^G_H\right)_\gotP\simeq
\left(C^{G_i}_{H_i}\right)_\gotP$ follows, hence (iii).
The condition in (ii) implies that $G_i(\gotP)=K_i(\gotP)$
for all height one prime ideals. So $\Phi_K^G$ is a pseudo-isomorphism between
reflexive modules and thus is an isomorphism, by Lemma~\ref{reflexive}. Hence (ii).
\end{proof}

\subsection{Proof of Theorem~\ref{BCB}}
The following is a generalization of a result of Benson and Crawley-Boevey~\cite[Corollary 3.12.2]{Benson1993} (we formulated this result earlier as Theorem~\ref{BCB}).

\begin{corollary}\label{newBenson}
Let $A$ be a normal graded algebra with a finite group $G$ of $k$-algebra automorphisms,
and $M$ a finite dimensional $kG$-module.

(i) 
For any subgroup $K\leq G$ such that $K\supseteq W$ we have
$$s_{A^G}(A^G(M))=s_{A^K}(A^K(M)).$$
In particular, if $W$ acts trivially on $M$, then 
$$s_{A^G}(A^G(M))=0.$$

(ii)
Suppose $G_i(\gotP)\cap G_i(\gotP')=\{1\}$ for distinct height one prime ideals $\gotP$ and
$\gotP'$ of $A$.  Then
$$s_{A^G}(A^G(M))=\sum_{\gotP}s_{A^{G_i(\gotP)}}(A^{G_i(\gotP)}(M))$$
where the sum is over the homogeneous height one prime ideals of $A$.
\end{corollary}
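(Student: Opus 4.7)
The plan is to derive both parts from Theorem~\ref{main}, combined with the $s$-invariant arithmetic of Proposition~\ref{Benson1}, Lemma~\ref{degpsi} and Proposition~\ref{Benson2}.

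For (i), I would apply Theorem~\ref{main}(ii) to the subgroup $K\supseteq W$, producing an isomorphism of reflexive $A$-modules
$$\Hom_{A^K}(A, A^K(M)) \simeq \Hom_{A^G}(A, A^G(M)),$$
and then compute $s_{A^G}$ of both sides. Since $A$ is torsion-free, Proposition~\ref{Benson1}(i) opens the right-hand side as $|G|\,s_{A^G}(A^G(M))-(\dim M)\,s_{A^G}(A)$. The left-hand side is first evaluated as an $s_{A^K}$ by the same formula, then pulled down to $A^G$ via Lemma~\ref{degpsi}(ii); the vanishing $s_{A^G}(A^K)=0$ from Proposition~\ref{Benson2}(i) kills a cross-term, and the identity $s_{A^G}(A)=(|G|/|K|)\,s_{A^K}(A)$ (again from Lemma~\ref{degpsi}(ii)) cancels the remaining trace contribution. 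Equating the two expressions delivers $s_{A^G}(A^G(M))=s_{A^K}(A^K(M))$. The ``in particular'' statement follows because if $W$ acts trivially on $M$ then $A^W(M)=A^W\otimes_k M$ is freely generated in degree zero, so its $s$-invariant is zero.

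For (ii), I would apply Theorem~\ref{main}(i) and (iii) with $H=1$, giving the short exact sequence of reflexive $A$-modules
$$0\to A\otimes_k M \xrightarrow{\Phi^G_1} \Hom_{A^G}(A, A^G(M)) \to C^G_1 \to 0,$$
with the rank-zero cokernel $C^G_1$ supported on the height-one primes $\gotP$ where $G_i(\gotP)\ne 1$, and satisfying $(C^G_1)_\gotP \simeq (C^{G_i(\gotP)}_1)_\gotP$ at each such prime. The intersection hypothesis guarantees that $C^{G_i(\gotP)}_1$ is itself supported only at $\gotP$, since for $\gotP'\ne \gotP$ the inertia of $G_i(\gotP)$ at $\gotP'$ equals $G_i(\gotP)\cap G_i(\gotP')=1$. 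Additivity of $s$ on the sequence (and its analogue for each $G_i(\gotP)$) yields
$$|G|\,s_{A^G}(A^G(M)) = s_{A^G}(C^G_1) + 2(\dim M)\,s_{A^G}(A),$$
and the corresponding identity with $G$ replaced by each $G_i(\gotP)$. Dividing the analogues by $|G_i(\gotP)|$, summing over $\gotP$ and applying Proposition~\ref{Benson2}(ii) to rewrite $\sum_\gotP (1/|G_i(\gotP)|)\,s_{A^{G_i(\gotP)}}(A)=(1/|G|)\,s_{A^G}(A)$, the desired equality reduces to the single identity
$$\tfrac{1}{|G|}\,s_{A^G}(C^G_1)=\sum_\gotP \tfrac{1}{|G_i(\gotP)|}\,s_{A^{G_i(\gotP)}}(C^{G_i(\gotP)}_1).$$

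To establish this last identity I would use Lemma~\ref{degpsi}(i), which gives $s_B(C)=-\psi(C)/\deg B$ for a rank-zero module $C$; combined with $\deg A=|G|\deg A^G=|G_i(\gotP)|\deg A^{G_i(\gotP)}$, both sides become multiples of $-1/\deg A$, so the identity collapses to $\psi(C^G_1)=\sum_\gotP \psi(C^{G_i(\gotP)}_1)$. For a graded torsion module $C$ supported in codimension one, filtering by shifts of $A/\gotP$'s gives the Laurent-coefficient formula $\psi(C)=\sum_\gotP \ell_{A_\gotP}(C_\gotP)\,\deg(A/\gotP)$. Theorem~\ref{main}(iii) matches the lengths at each $\gotP$, while each $C^{G_i(\gotP)}_1$ contributes only its single term from $\gotP$, so the two $\psi$'s agree term by term. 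The principal obstacle is concentrated in Theorem~\ref{main}(iii) itself, whose proof passes through completion at $\gotP$; once that local isomorphism is granted, the remainder of the argument is a careful but routine Laurent-expansion bookkeeping.
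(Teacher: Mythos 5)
Your proposal is correct and follows essentially the same route as the paper's own proof. For (i) you reproduce the paper's computation precisely: Theorem~\ref{main}(ii), Proposition~\ref{Benson1}(i) to evaluate $s$ of the $\Hom$ modules, Lemma~\ref{degpsi}(ii) to transport between base rings, and Proposition~\ref{Benson2}(i) to kill the cross-term; for (ii) your explicit naming of the cokernel $C^G_1$ and the reduction to $\psi(C^G_1)=\sum_\gotP\psi(C^{G_i(\gotP)}_1)$ is a transparent rephrasing of what the paper does by tracking $\psi(\Hom_{A^H}(A,A^H(M)))-\dim_kM\cdot\psi(A)$ directly and invoking the length formula of Theorem~\ref{main}(iii). (One small slip: in the support filtration you should have $\psi(A/\gotP)$, not $\deg(A/\gotP)$, as the coefficient multiplying the length, since $A/\gotP$ has zero $\deg$ as an $A$-module; this does not affect the argument.)
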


\begin{proof}
(i) From Theorem~\ref{main}, we get an isomorphism of $A$-modules
$$\Hom_{A^K}(A,A^K(M))\simeq \Hom_{A^G}(A,A^G(M)).$$
We calculate using Proposition~\ref{Benson1}:
$$s_{A^G}(\Hom_{A^G}(A,A^G(M)))=|G|s_{A^G}(A^G(M))-\dim_kM\cdot s_{A^G}(A),$$
since $s_{A^G}(\Ext_{A^G}^1(A,A^G(M)))=0$ (because $A$ is $A^G$-torsion free and $A^G$ is normal).
On the other hand, Lemma~\ref{degpsi} and Proposition~\ref{Benson1} give
\begin{eqnarray*}
\lefteqn{s_{A^G}(\Hom_{A^K}(A,A^K(M)))=}\\
&=& s_{A^K}(\Hom_{A^K}(A,A^K(M)))\frac{|G|}{|K|}+|K|\dim_kM\cdot s_{A^G}(A^K)\\
&=&|G|s_{A^K}(A^K(M))-\dim_kM\cdot s_{A^K}(A)\frac{|G|}{|K|},
\end{eqnarray*}
because $s_{A^G}(A^K)=0$, by Proposition~\ref{Benson2}.
Since 
$$s_{A^G}(A)=s_{A^K}(A)\frac{|G|}{|K|}+|K|s_{A^G}(A^K)=s_{A^K}(A)\frac{|G|}{|K|},$$
we get that
$$s_{A^G}(A^G(M))=s_{A^K}(A^K(M)).$$

(ii) Let $\gotP$ be a homogeneous height one prime ideal of $A$ with inertia subgroup $H:=G_i(\gotP)$.
Let $\gotP'$ be another homogeneous height one prime ideal of $A$. Then
by the assumption $H_i(\gotP')=1$. So the cokernel $C_{\gotP}$ of 
$$\Phi^H:A\otimes_k M\to \Hom_{A^H}(A,A^H(M))$$ 
vanishes at $\gotP'$. Therefore, since $\Phi^H$ is injective, we get
$$\length_{\gotP}(C_\gotP)\psi(A/\gotP)=\psi(\Hom_{A^H}(A,A^H(M)))-\psi(A)\dim_kM.$$
So, from the theorem, it follows that
\begin{eqnarray*}
\lefteqn{\psi(\Hom_{A^G}(A,A^G(M)))-\psi(A)\dim_kM=}\\
&=& \sum_{\gotP}\left(
\psi(\Hom_{A^{G_i(\gotP)}}(A,A^{G_i(\gotP)}(M)))-\psi(A)\dim_kM
\right),
\end{eqnarray*}
where  the sum is over the homogeneous height one prime ideals of $A$.

A direct calculation as in (i) shows that
$$\psi(\Hom_{A^G}(A,A^G(M)))-\psi(A)\dim_kM=-\deg(A)s_{A^G}(A^G(M))+2\deg(A)\dim_kM \frac{s_{A^G}(A)}{|G|}$$
and similarly, where $G$ is replaced by $G_i(\gotP)$. Using Proposition~\ref{Benson2}(ii) that
$$\frac{1}{|G|}s_{A^G}(A)=\sum_{\gotP}\frac{1}{|G_i(\gotP)|}s_{A^{G_i(\gotP)}}(A),$$
we get
$$s_{A^G}(A^G(M))=
\sum_{\gotP}s_{A^{G_i(\gotP)}}(A^{G_i(\gotP)}(M))
,$$
where both sums are over the homogeneous height one prime ideals of $A$.
\end{proof}

The following corollary (of the proof of Theorem~\ref{main}) is used in the proof of Proposition~\ref{pointstabilizer}.
\begin{lemma}\label{hulp}
Suppose $A^G(M)$ is free (or Cohen-Macaulay), and $\gotP\subset A$ a prime
ideal with decomposition group $G_d$ and inertia group $G_i$. Then

(i) $A^{G_d}(M)$ is  free (or Cohen-Macaulay)
at the prime ideal $\gotP\cap k[V]^{G_d}$.

(ii) $A^{G_i}(M)$ is  free (or Cohen-Macaulay)
at the prime ideal $\gotP\cap k[V]^{G_i}$.
\end{lemma}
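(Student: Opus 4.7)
My plan is to derive both parts of the lemma directly from material already assembled in the proof of Theorem~\ref{main}: the Galois-theoretic base-change isomorphism $\mu: S\otimes_R N^{\Gamma}\simeq N$ constructed in step~(1) of that proof, and the completion-theoretic ring and module isomorphisms $\pi$ and $\pi_M$ constructed in step~(2). The guiding principle is that freeness and the Cohen--Macaulay property are local, stable under faithfully flat base change, and, for a finitely generated module over a Noetherian local ring, reflected by the completion map. So the task reduces to tracking these two properties through the isomorphisms that are already in hand.

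For part~(i), I would set $\gotp:=\gotP\cap A^G$ and $\gotP_d:=\gotP\cap A^{G_d}$, and take $\gotp$-adic completions throughout (as in the notation of the proof of Theorem~\ref{main}, step~(2)). That step yields a ring isomorphism $\pi:\widehat{A^G}\to\widehat{(A^{G_d})_{\gotP_d}}$ together with a $\pi$-compatible module isomorphism $\pi_M:\widehat{A^G(M)}\to\widehat{A^{G_d}(M)_{\gotP_d}}$. Freeness (respectively the Cohen--Macaulay property) of $A^G(M)$ over $A^G$ transfers to the localization at $\gotp$ and then to the completion; is transported by $\pi_M$ to $\widehat{A^{G_d}(M)_{\gotP_d}}$ as a module over $\widehat{(A^{G_d})_{\gotP_d}}$; and finally descends through the faithfully flat map $(A^{G_d})_{\gotP_d}\to\widehat{(A^{G_d})_{\gotP_d}}$, giving the conclusion of~(i).

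For part~(ii), with $\gotP_i:=\gotP\cap A^{G_i}$, $R:=(A^{G_d})_{\gotP_d}$, $S:=(A^{G_i})_{\gotP_i}$, and $\Gamma:=G_d/G_i$, I would invoke the isomorphism
$$\mu:S\otimes_R A^{G_d}(M)_{\gotP_d}\longrightarrow A^{G_i}(M)_{\gotP_i}$$
established in step~(1) of Theorem~\ref{main}'s proof, after the identification $A^{G_i}(M)_{\gotP_i}\simeq (A_\gotP\otimes_k M)^{G_i}$ that is available because $\gotP$ is the unique prime of $A$ above $\gotP_i$. Step~(1) also shows that $R\subset S$ is a Galois extension of local rings with group $\Gamma$, and in particular $S$ is finite free over $R$. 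Since base change along a finite free ring map preserves both freeness and the Cohen--Macaulay property for finitely generated modules, combining $\mu$ with the conclusion of~(i) delivers~(ii).

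The main obstacle will be bookkeeping around the various localizations at $\gotp,\gotP_d,\gotP_i,\gotP$ and around the passage to the completion: one must keep careful track of which ring of scalars is in play when reading freeness or the Cohen--Macaulay property off a given module, and verify that the identifications $A^{G_d}(M)_{\gotP_d}\simeq(A_\gotP\otimes_k M)^{G_d}$ and $A^{G_i}(M)_{\gotP_i}\simeq(A_\gotP\otimes_k M)^{G_i}$ really hold under the hypothesis that $\gotP$ is the unique prime above $\gotP_d$ and above $\gotP_i$. The descent statements themselves (freeness and the Cohen--Macaulay property being reflected by a faithfully flat ring map, in particular by completion of a Noetherian local ring at its maximal ideal) are standard, but they are where the actual substance of the argument lives.
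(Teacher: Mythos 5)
Your proposal follows essentially the same route as the paper's own proof. For (i) you invoke the completion isomorphisms $\pi$, $\pi_M$ from step~(2) of Theorem~\ref{main} and then descend from the $\gotp$-adic completion back to $A^{G_d}(M)_{\gotP_d}$ over $(A^{G_d})_{\gotP_d}$; for (ii) you base-change along the finite free Galois extension $R\subset S$ using the multiplication isomorphism $\mu$ from step~(1), feeding in the conclusion of (i). That is precisely what the paper does, so the argument is correct and matches.
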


\begin{proof}
We use the results, techniques and notation of the proof of Theorem~\ref{main}.

(i) We proved that
$$(\widehat{A}e_\gotP\otimes_kM)^{G_d}\simeq (\widehat{A}\otimes_k M)^G\simeq ((A\otimes_k M)^G)^\wedge,$$
and so $(\widehat{A}e_\gotP\otimes_kM)^{G_d}$ is free (or Cohen-Macaulay) 
 over $\widehat{A}^G\simeq (\widehat{A}e_\gotP)^{G_d}$.
 On the other hand, $\widehat{A}e_\gotP$ is also the completion of $A_{\gotP_d}$ with respect to the
 $\gotP_d$-adic topology. So the completion of $A^{G_d}(M)_{\gotP_d}$ with respect to the $\gotP_d$-adic
 topology is free (or Cohen-Macaulay), hence $A^{G_d}(M)_{\gotP_d}$ is free (or Cohen-Macaulay)
 over $A^{G_d}_{\gotP_d}$.
 
 (ii) We found that  multiplication induces an isomorphism of 
 $S\Gamma$-modules $$S\otimes_R(A_\gotP\otimes_k M)^{G_d}\simeq (A_\gotP\otimes_k M)^{G_i}.$$
 Since we proved in (i) that $(A_\gotP\otimes_k M)^{G_d}$ is free  over $R$ (or Cohen-Macaulay) it follows  that   $(A_\gotP\otimes_k M)^{G_i}$ is also free (or Cohen-Macaulay), since $S$ is free over $R$).
 \end{proof}
 
 \begin{remark}\label{psi}
Reformulating the corollary in terms of the numerical invariant $\psi$ we get
for any subgroup $W\leq K\leq G$ that
$$|G|\psi(A^G(M))=|K|\psi(A^K(M)).$$
In particular, when $W=1$, then 
$$|G|\psi(A^G(M))=\psi(A)\dim_kM.$$
And if $G_i(\gotP)\cap G_i(\gotP')=\{1\}$, for distinct height one prime ideals $\gotP$ and
$\gotP'$ of $A$, then 
$$|G|\psi(A^G(M))-\dim_k M \psi(A)=
\sum_{\gotP}\left(|G_i(\gotP)|\psi(A^{G_i(\gotP)}(M))-\dim_k M \psi(A)\right),$$
where the sum is over the homogeneous height one prime ideals of $A$.
\end{remark}

\begin{example}
Take the special case where $M=kG$ is the regular representation. If we restrict this representation
to a subgroup $H$, it decomposes as the direct sum of $\frac{|G|}{|H|}$ copies of the
regular representation $kH$ of $H$. So under the hypothesis of Corollary~\ref{newBenson}(ii)  we get
\begin{eqnarray*}
s_{A^G}(A)&=&s_{A^G}(A^G(kG))\\
&=&\sum_{\gotP}s_{A^{G_i(\gotP)}}(A^{G_i(\gotP)}(kG))\\
&=&\sum_{\gotP}\frac{|G|}{|G_i(\gotP)|} s_{A^{G_i(\gotP)}}(A^{G_i(\gotP)}(kG_i(\gotP)))\\
&=&\sum_{\gotP}\frac{|G|}{|G_i(\gotP)|} s_{A^{G_i(\gotP)}}(A)\\
\end{eqnarray*}
where the sum is over the homogeneous height one prime ideals of $A$.
Hence we recover Proposition~\ref{Benson2}(ii). 
\end{example}

\subsection{Proof of the Jacobian criterion}\label{proofjacobian}
We shall use the techniques of the proof of Theorem~\ref{main} to give a proof of the
Jacobian criterion of freeness of modules of covariants, i.e., Theorem~\ref{Jacobian}.

\begin{proof}[Proof of Theorem~\ref{Jacobian}]
Write $A:=k[V]$. Since $\Jac_M$ is multilinear in its arguments, we get an $A^G$-linear map 
$$\Jac_M: \wedge^m_{A^G}\left(A\otimes_k M\right)^G\to A:\
\omega_1\wedge\omega_2\wedge\ldots\wedge \omega_m\mapsto
\Jac_M(\omega_1,\ldots,\omega_m).$$
Its image is just $J^G_M$, an $A^G$-submodule of $A$ contained in $A^G_\lambda$.
Let $A^G\subset B$ be a flat extension of algebras, then
\begin{eqnarray*}
B\otimes_{A^G}\wedge^m_{A^G}\left(A\otimes_k M\right)^G
&\simeq&\wedge^m_{B}\left(B\otimes_{A^G}\left(A\otimes_k M\right)^G\right)\\
&\simeq&\wedge^m_{B}\left(B\otimes_{A^G}A\otimes_k M\right)^G,
\end{eqnarray*}
by \cite[p.~571]{Eisenbud} and our remark at the beginning of the proof of Theorem~\ref{main}.
The corresponding Jacobian map
$$B\otimes_{A^G}\Jac_M: \wedge^m_{B}\left(B\otimes_{A^G}A\otimes_k M\right)^G\to B\otimes_{A^G}A$$
has then image $B\otimes_{A^G}J^G_M\subset B\otimes_{A^G}A$.

Let $\gotP\subset A$ be a prime ideal. We use the results, techniques and notation of the proof of Theorem~\ref{main}. In particular, $\gotp$, $G_i$, $G_d$, $\widehat{A}$, $e_\gotP$, $\gotm$, etc. We will break the proof in several steps. That the action on $A:=k[V]$ comes from
a linear action on $V$ does not play a role in the first three steps.

(1) In the first step, we prove that $J^{G_d}_MA_\gotP^{G_i}=J^{G_i}_MA_\gotP^{G_i}$.
Multiplication induces an isomorphism
$$A_\gotP^{G_i}\otimes_{A_{\gotP}^{G_d}} (A_{\gotP}\otimes_k M)^{G_d}\simeq (A_{\gotP}\otimes_k M)^{G_i},$$
and $A_\gotP^{G_d}\subset A_\gotP^{G_i}$ is a free extension. So from the remarks above,
it follows that 
$$A_\gotP^{G_i}\otimes_{A^{G_i}}J^{G_i}_M=A_\gotP^{G_i}\otimes_{A^{G_d}_\gotP}A_\gotP^{G_d}\otimes_{A^{G_d}}J^{G_d}_M,$$
or $J^{G_d}_MA_\gotP^{G_i}=J^{G_i}_MA_\gotP^{G_i}$.

(2) In the second step, we prove that $J^{G}_M\widehat{A_\gotP}^{G_d}=J^{G_d}_M\widehat{A_\gotP}^{G_d}$.
Projection induces isomorphisms $\widehat{A}^G\simeq \widehat{A}^{G_d}e_\gotP(\simeq
\widehat{A_\gotP}^{G_d})$ and
$$(\widehat{A}\otimes_k M)^G\simeq (\widehat{A}e_\gotP\otimes_k M)^{G_d}
(\simeq (\widehat{A_\gotP}\otimes_k M)^{G_d}),$$ with inverse
$\Tr^G_{G_d}$.
In particular, if  $\omega\in\left(\widehat{A}\otimes_k M\right)^G$, then $e_\gotP\omega\in \left(\widehat{A}\otimes_k M\right)^{G_d}$ and $\omega=\Tr^G_{G_d}(e_\gotP\omega)$.

Consider $\omega_1\wedge\ldots\wedge \omega_m\in \wedge^m_{\widehat{A}}\left(\widehat{A}\otimes_k M\right)$, where $\omega_1,\ldots,\omega_m$ are elements of $\left(\widehat{A}\otimes_k M\right)^G$. From the orthogonality of the idempotents, it follows
 \begin{eqnarray*}
 \omega_1\wedge\ldots\wedge \omega_m&=&
 (\Tr^G_{G_d}(e_\gotP\omega_1))\wedge\ldots\wedge (\Tr^G_{G_d}(e_\gotP\omega_m))\\
 &=& \Tr^G_{G_d}\left(e_\gotP \omega_1\wedge\ldots\wedge e_\gotP\omega_m\right),
 \end{eqnarray*}
 or, by definition of the Jacobian determinant (with $v_1,\ldots,v_m$ the fixed basis of $M$),
 \begin{eqnarray*}
 \Jac_M\left(\omega_1,\ldots, \omega_m\right)\otimes (v_1\wedge\ldots\wedge v_m)&=&
 \Tr^G_{G_d}\left(\Jac_M\left(e_\gotP\omega_1,\ldots, e_\gotP\omega_m\right)\otimes (v_1\wedge\ldots\wedge v_m)\right)\\
 &=&\Tr^G_{G_d}\left(e_\gotP\Jac_M\left(\omega_1,\ldots, \omega_m\right)\otimes (v_1\wedge\ldots\wedge v_m)\right)
 \end{eqnarray*}
 in $\widehat{A}\otimes_k\wedge^mM$. 
 Therefore, $\Jac_M\left(\omega_1\wedge\ldots\wedge \omega_m\right)e_\gotP=
  \Jac_M\left(e_\gotP\omega_1\wedge\ldots\wedge e_\gotP\omega_m\right).$
  We conclude that 
  $J^G_M\widehat{A}^Ge_\gotP$ equals the image of the Jacobian map 
  $$\wedge^m_{\widehat{A}^{G_d}}\left(\widehat{A}e_\gotP\otimes_k M\right)^{G_d}\to \widehat{A}e_\gotP,$$
i.e., 
$$J^G_M\widehat{A}^Ge_\gotP=J^{G_d}_M\widehat{A}^{G_d}e_\gotP=J^{G_d}_M\widehat{A}^{G_d}e_\gotP.$$ So indeed,
 $J^G_M\widehat{A_\gotP}^{G_d}=J^{G_d}_M\widehat{A_\gotP}^{G_d}$.

(3) In this step, we prove that if $\gotP$ contains $J^G_M$, 
then the inertia subgroup of $\gotP$ is non-trivial. 
Since $J^G_M\subseteq \gotP$, it follows from the second step that
$J_M^{G_d}\widehat{A_\gotP}^{G_d}=J_M^{G}\widehat{A_\gotP}^{G_d}\subseteq \gotP\widehat{A_\gotP}^{G_d}$. Since, by the first step,
$J_M^{G_i}A_\gotP^{G_i}=J_M^{G_d}A_\gotP^{G_i}$,
we obtain
$$J_M^{G_i}\widehat{A_\gotP}^{G_i}=J_M^{G_d}\widehat{A_\gotP}^{G_i}=J_M^{G}\widehat{A_\gotP}^{G_i}\subseteq \gotP\widehat{A_\gotP}^{G_i}.$$
Supposing now that the inertia subgroup is trivial, then $J_M^{G_i}=A$ and we conclude that
$\widehat{A_\gotP}\subseteq \gotP\widehat{A_\gotP}$, which is a contradiction. So $G_i$ is non-trivial.

(4) In this step, we prove (i) and (ii). Let $F_{G,M}$ be a greatest common divisor of all elements on $J^G_M$. It is unique
up to a scalar and $k[V]F_{G,M}$ is the intersection of all height one prime ideals in $k[V]$
containing $J^G_M$, and so $J^G_M\subseteq k[V]F_{G,M}$. Let $f$ be an irreducible factor of 
$F_{G,M}$ generating the height one prime ideal $\gotP$. The multiplicity $\mu$ of $f$ in $F_{G,M}$ then coincides
with the integer $\mu $ such that $J^G_MA_\gotP=(\gotP A_{\gotP})^{\mu}$ in the discrete
valuation ring $A_\gotP$. It also coincides 
with the integer $\mu$ such that $J^G_M\widehat{A_\gotP}=(\gotP \widehat{A_{\gotP}})^{\mu}$ in the discrete
valuation ring $\widehat{A_\gotP}$. Since we showed in the first two steps  that
$J^G_M\widehat{A_\gotP}=J^{G_i}_M\widehat{A_\gotP}$,
the multiplicity of $f$ in $F_{G,M}$ equals  the multiplicity of $f$ in $F_{G_i,M}.$

Since $J^G_M\subseteq \gotP$, 
the inertia subgroup of $\gotP$ is non-trivial by the third step.
Since we are dealing with a linear action,  this forces $f$ to be a linear form, say $f=x_U$,
where $x_U$ defines a linear subspace $U\subset V$ of codimension one. Then $G_i$ identifies
with the point-stabilizer $G_U$ of $U$, consisting of reflections having $U$ as reflection hyperplan. By Proposition~\ref{pointstabilizer},  $k[V]^{G_U}(M)$ is free, say with basis
$\omega_1,\ldots,\omega_m$. Then $J^{G_U}_M$ is generated by
$F_{G_U,M}=\Jac_M(\omega_1,\ldots,\omega_m)$, having  necessarily degree
$e_U(M)$ (the sum of the degrees of the $\omega_i$'s). Since the intersection of $G_i$ with the inertia subgroup of any other height
one prime ideal is trivial, it follows that $x_U$ is the only irreducible factor of $h$, hence
up to a scalar we get $F_{G_U,M}=x_U^{e_U(M)}$.

We conclude that ${e_U(M)}$ is also the multiplicity of $x_U$ in $F_{G,M}$.
Comparing with the definition of $F_M$ given  before the statement of the theorem, we conclude
that $F_M=F_{G,M}$ up to a non-zero scalar. This shows (i) and (ii).

(5) Finally, we prove (iii). Let $\omega_1,\ldots,\omega_m\in ([k[V]\otimes_kM)^G$.
If (a) holds, then $J^G_M$ is generated by $\Jac_M(\omega_1,\ldots,\omega_m)$, and
so by (ii), is equal to $F_M$ up to a non-zero scalar. Hence (b).
We remark that $s_{k[V]^G}(k[V]^G(M))$ is equal to the degree of $F_M$, hence
(b) and (c) are clearly equivalent.

Suppose (b). Then $F_M\in J^G_M$ and in particular, $F_M\in k[V]^G_\lambda$. 
Let $h\in J^G_M$. Then $h\in k[V]^G_\lambda$ and by (ii) there exists a $b\in k[V]$ such
that $h=bF_M$ and so $b\in k[V]^G$, i.e. $J_M^G=k[V]^GF_M$.

A square matrix with coefficients in a field is invertible {\em if and only if} its determinant is non-zero. Hence, if $v_1,\ldots, v_m$ are  vectors in an $m$-dimensional vector space, 
they form  a basis {\em if and only if} $v_1\wedge \ldots \wedge v_m$ is
non-zero in the top exterior power of the vector space.  Let now $L$ be the quotient field of $k[V]$. 
Since $\Jac_M(\omega_1,\ldots,\omega_m)$ is non-zero, the
$\omega_1,\ldots,\omega_m$ form a basis over $L^G$ of $(L\otimes_k M)^G$.
Let $\omega\in k[V]^G(M)$ be non-zero. Then it follows that there are $b,b_1,\ldots,b_m\in k[V]^G$ such that
$$b\omega=\sum_ib_i\omega_i.$$ 
There are $b_i'\in k[V]^G$ such that
$$\Jac_M(\omega_1,\ldots,\omega_{i-1},b\omega,\omega_{i+1},\ldots,\omega_m)=bb_i'F_M.$$
On the other hand, using $b\omega=\sum_ib_i\omega_i$, we get
$$b_i F_M=bb_i'F_M.$$
So $b_i=bb_i'$ and $b\omega=b\sum_ib_i'\omega_i$. Since $k[V]^G(M)$ is torsion free, we
get 
$$\omega=\sum_ib_i'\omega_i,$$ 
and so $\omega_1,\ldots,\omega_m$ generate
the module of covariants of rank $m$, hence (a). This finishes the proof of (iii).
\end{proof}

\end{document}